\newtheorem{theorem}{Theorem}
\newtheorem*{theorem*}{Theorem}
\newtheorem{duptheorem}{Theorem}
\newtheorem{lemma}{Lemma}[section]
\newtheorem{loanthm}[lemma]{Theorem}
\newtheorem*{definition*}{Definition}
\newtheorem{proposition}[lemma]{Proposition}
\newtheorem*{proposition*}{Proposition}
\newtheorem{example}[lemma]{Example}
\newtheorem{corollary}[lemma]{Corollary}
\newtheorem{remark}[lemma]{Remark}
\newtheorem*{convention}{Convention}
\newtheorem{definition}[lemma]{Definition}
\newtheorem{notation}[lemma]{Notation}
\DeclareMathOperator{\Gal}{Gal}
\newcommand{\QQ}{\mathbb{Q}}
\newcommand{\RR}{\mathbb{R}}
\newcommand{\CC}{\mathbb{C}}
\newcommand{\ZZ}{\mathbb{Z}}
\newcommand{\NN}{\mathbb{N}}
\newcommand{\id}{\mathds{1}}
\newcommand{\inv}{^{-1}}
\newcommand{\comp}{\circ}
\newcommand{\abs}[1]{\left|#1\right|}
\newcommand{\norm}[1]{\left|\left|#1\right|\right|}
\newcommand{\set}[1]{\left\{#1\right\}}
\newcommand{\acts}{\curvearrowright}
\newcommand{\mf}[1]{\mathfrak{#1}}
\newcommand{\mc}[1]{\mathcal{#1}}
\newcommand{\kgv}{\mathrm{lcm}}
\newcommand{\without}[1]{\backslash\{#1\}}
\newcommand{\normal}{\vartriangleleft}
\newcommand{\conj}{\mathrm{Conj}}
\newcommand{\betha}{\beta}
\newcommand{\trace}{\mathrm{Tr}}
\newcommand{\range}[1]{\{1,2,\cdots,#1\}}
\newcommand{\ZZG}{\ZZ[\zeta_G]}
\newcommand{\ZZp}{\frac{\ZZ}{p\ZZ}}
\DeclareMathOperator{\Ima}{Im}
\DeclareMathOperator{\RF}{RF}
\title{Effective conjugacy separability of virtually abelian groups.}
\author{Jonas Der\'e and Lukas Vandeputte \thanks{Corresponding author: Jonas Der\'e (jonas.dere@kuleuven.be). The first author were supported by Internal Funds KU Leuven (project number 3E220559). }}
\date{KU Leuven Campus Kulak Kortrijk\\ Department of Mathematics\\
	Etienne Sabbelaan 53\\
	B-8560 Kortrijk, Belgium. }
\begin{document}
	\sloppy
	\maketitle
	\begin{abstract}
	A natural question for groups $H$ is which data can be detected in its finite quotients. A subset $X \subset H$ is called separable if for all $h\in H \setminus X$, there exists an epimorphism $\varphi$ to a finite group $Q$ such that $\varphi(h)\notin\varphi(X)$. More specifically, a group is said to be conjugacy separable if every conjugacy class is separable. It is known that many classes of groups are conjugacy separable, including virtually free and polycyclic groups. The minimal order of the quotient $Q$, in terms of the complexity of the conjugacy classes under consideration, is captured by the conjugacy separability function $\conj_H: \mathbb{N} \to \mathbb{N}$. This function is in general ill understood, in fact the only large class of groups for which it is known exactly are the abelian groups. Indeed, in this case $\conj_H$ is equal to the residual finiteness function, that is the size of quotients needed to separate singletons, and thus logarithmic if the group is infinite.
	
	Recent work has described the residual finiteness function for the class of virtually abelian groups, which gives a lower bound for the conjugacy separability function. The main result of this paper is a characterization of $\conj_H$ for every virtually abelian group $H$. If the corresponding extension is associated with an irreducible representation over $\QQ$, we demonstrate that we obtain the same function as the residual finiteness function. However, if the representation is not irreducible, we find an expression that is in some cases strictly larger, which we illustrate with several examples.
	\end{abstract}
	\section{Introduction}
	We say a group $H$ is residually finite if for all non-trivial $h\in H$, there exists an epimorphism to a finite group $\varphi: H\rightarrow Q$ preserving non-triviality of $h$. It was shown by Mal'cev in \cite{mal1965faithful} that if a finitely presented group is residually finite, then it has solvable word problem. Similarly, a group $H$ is conjugacy separable if for non-conjugate $g,h\in G$, there exists an epimorphism to a finite group $\varphi:H\rightarrow Q$ such that $\varphi(g)$ and $\varphi(h)$ are non-conjugate. Again if a finitely presented group is conjugacy separable, it has solvable conjugacy problem. Classically, the main focus is on determining which groups satisfy these separability properties. For example it has been shown that free groups, virtually polycyclic groups, surface groups, and fundamental groups of closed orientable $3$-manifolds are all residually finite and conjugacy separable, see for example the list of papers \cite{blackburn1965conjugacy,formanek1976conjugate,hamilton2013separability,hempel1987residual,remeslennikov1971finite,stebe1972conjugacy}
	
However, in recent years, an effort was made in not only determining which groups are residually finite or conjugacy separable, but also quantifying to which degree this is the case. Bou-Rabee introduced the residual finiteness function ${\mathrm{RF}_H:\NN\rightarrow\NN}$ in \cite{bou2010quantifying}, which returns for every $n\in\NN$ the minimal integer ${\mathrm{RF}}_H(n)$ such that for every $h\in H$ with $\norm{h}_S\leq n$, there exists some quotient $Q$ as above with $\abs{Q}\leq{\mathrm{RF}}_H(n)$. This function depends on the word norm $\norm{.}_S$ and thus on the choice of generating set $S$, but the asymptotic growth of this function is well-defined. In the same spirit, Lawton, Louder and McReynolds defined the function $\conj_H: \mathbb{N} \to \mathbb{N}$ in \cite{LLM} which again for any $n\in\NN$, returns the minimal integer $\conj_H(n)$ such that for all non-conjugate $a,b\in H$ with $\norm{a}_S,\norm{b}_S\leq n$, there exists a finite quotient $\varphi:H\rightarrow Q$ such that $\varphi(a)$ and $\varphi(b)$ are non-conjugate and $\abs{Q}\leq \conj_H(n)$.
	
	These functions play a crucial role in other areas such as geometry, topology, arithmetic and dynamics, see \cite{survey} for a recent survey on this topic. For the residual finiteness growth, there are several known upper bounds, where one the most general states that for any finitely generated linear group $H$ the function $\RF_H(n)$ is at most polynomial, see \cite{MR3323650}, with a conjectural value of $n^{\frac{3}{2}}$ for free groups. For some specific classes groups, this bound is strengthened, for instance when $H$ is virtually nilpotent then $\RF_H(n)\prec \ln(n)^d$ by \cite{bou2010quantifying}. In contrast to upper bounds, lower bounds are much rarer. By consequence, there are not a lot of groups for which the exact function $\RF_H$ is known, but examples include among others the finitely generated abelian groups, where we obtain $\ln(n)$ by \cite{bou2010quantifying}, certain nilpotent groups \cite{pengitore2015effective}, some subgroups of Chevalley groups in \cite{bou2012quantifying,MR3633299}. Very recently $\RF_H$ was determined for virtually abelian groups $H$ in \cite{dere2023residual}.
	
	For conjugacy separability function, there are even fewer results, and almost no exact bounds. In some specific cases there are known upper and lower bounds to these functions, but only in rare cases do these bounds match. So far this is only the case if either the group is finite, in which case the conjugacy separability function is constant, if the group is infinite and abelian, giving a logarithmic conjugacy separability function by \cite{bou2010quantifying}, when the group is a lamplighter group in which case we obtain an exponential function by \cite{ferov2022quantifying} or when the group is a generalised Heisenberg group, in which case we obtain a polynomial of a certain degree, see \cite{pengitore2015effective}. Despite their importance, these separability functions are still quite ill understood.
	
As mentioned above, recent work of the first author together with Matthys managed to fully determine the residual finiteness function for the class of finitely generated virtually abelian groups.
	\begin{loanthm}[\cite{dere2023residual}]
		Let $H$ be a finitely generated virtually abelian group of Hirsch length $h$, then there exists a $0 \leq k \leq h$ such that $$
		\mathrm{RF}_H(n)\simeq \ln^k(n).
		$$
	\end{loanthm}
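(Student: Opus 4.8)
The plan is to prove the statement with an explicit exponent. Fix a free abelian normal subgroup $A\trianglelefteq H$ of finite index, so $A\iso\ZZ^h$, write $F=H/A$ for the finite quotient acting on $V=\QQ\otimes_\ZZ A$, and decompose $V=\bigoplus_i U_i$ into $\QQ[F]$-isotypic components; each $U_i$ is a $\QQ$-subspace, so $A\cap U_i$ is a full lattice in $U_i$, and let $\delta_i$ be the common dimension of the complex irreducible constituents of $\CC\otimes_\QQ U_i$. I claim the theorem holds with $k=\max_i\delta_i$; this satisfies $0\le k\le h$ because $\delta_i\le\dim_\QQ V=h$, and $k\ge1$ once $h\ge1$, with $k=0$ exactly when $H$ is finite. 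The first step is to reduce to detecting elements of $A$: an element of $H\setminus A$ is already nontrivial in the fixed quotient $H\twoheadrightarrow F$, so it costs only $O(1)$; the word metric of $H$ restricted to the finite-index subgroup $A$ is bi-Lipschitz to a fixed norm on $\ZZ^h$; and a finite quotient of $H$ in which $v\in A$ survives restricts to an $F$-invariant finite-index $\Lambda\le A$ with $v\notin\Lambda$ and $[A:\Lambda]$ at most the order of the quotient, while conversely such a $\Lambda$ (being then $H$-normal) gives a quotient of $H$ of order $[A:\Lambda]\cdot\abs F$. Thus, up to multiplicative constants, $\RF_H(n)$ is the maximum, over $v\in A\setminus\set0$ with $\norm v\le n$, of the least index of an $F$-invariant finite-index subgroup of $A$ not containing $v$.

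The core ingredient is the structure of these subgroups at all but finitely many primes. Fix a finite set $S$ of primes containing the divisors of $\abs F$ and the finitely many primes at which the $\ZZ[F]$-lattice $A$ is not maximal. For $p\notin S$ the ring $\mathbb F_p[F]$ is semisimple, and by Wedderburn together with the triviality of the Brauer group of a finite field, the block of $\ZZ_p[F]$ attached to $U_i$ is a product of matrix rings $M_{\delta_i}(\mathcal O)$ over unramified extensions $\mathcal O$ of $\ZZ_p$ --- the matrix size is always exactly $\delta_i$, and only the base ring varies with how $p$ splits. By Morita equivalence, the $U_i$-component of any finite $F$-invariant quotient of $A\otimes\ZZ_p$ is then a direct sum of blocks of the form $(\mathcal O/p^a\mathcal O)^{\delta_i}$, each of cardinality a power of $p^{\delta_i}$; in particular, if that component is nonzero and contains an element of additive order $p^a$, it has cardinality at least $p^{a\delta_i}$. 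If moreover $p\equiv1\pmod{\exp(F)}$, then every $\mathcal O$ equals $\ZZ_p$, so the smallest simple $\mathbb F_p[F]$-quotient of $A/pA$ on which a prescribed nonzero vector survives has $\mathbb F_p$-dimension exactly $\delta_i\le k$ for the relevant $i$.

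Granting this, the upper bound is short. For given $v$, its content (the gcd of its coordinates in a basis of $A$) is at most $\norm v^{O(1)}=n^{O(1)}$, and since the primes $p\notin S$ with $p\equiv1\pmod{\exp(F)}$ have positive density, a standard prime-counting argument yields such a $p$ of size $O(\ln n)$ not dividing this content. Then $\bar v\ne0$ in $A/pA$, it survives in a simple quotient of $\mathbb F_p$-dimension $\le k$, and pulling back gives $\Lambda$ with $v\notin\Lambda$ and $[A:\Lambda]\le p^k=O(\ln^k n)$, hence a quotient of $H$ of order $O(\ln^k n)$. For the lower bound, pick $i^\ast$ with $\delta_{i^\ast}=k$ and a nonzero $w\in A\cap U_{i^\ast}$, and set $v=\bigl(\lcm(1,\dots,t)\cdot\prod_{q\in S}q^{t}\bigr)\,w$, so that $\norm v=e^{\Theta(t)}$, i.e.\ $t$ is of order $\ln\norm v$. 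Let $\Lambda\le A$ be $F$-invariant of finite index with $v\notin\Lambda$; decomposing the finite group $A/\Lambda$ into its $p$-primary parts, $v$ is nontrivial in one of them, and writing $A/\Lambda^{(p)}$ for the corresponding $p$-group quotient of $A$ we have $[A:\Lambda]\ge[A:\Lambda^{(p)}]$. If $p\in S$, the factor $p^{t}$ forces $w$ to have order exceeding $p^{t}$ there, so $[A:\Lambda]>p^{t}\gg t^{k}$. If $p\notin S$, then $A/\Lambda^{(p)}$ splits along the isotypic components, $\bar v$ lies in the $U_{i^\ast}$-component, and since $\lcm(1,\dots,t)$ divides the scalar, the image of $w$ there has order a prime power $p^{a}>t$; by the structural fact above, that component, and hence $[A:\Lambda]$, has cardinality at least $p^{a\delta_{i^\ast}}>t^{k}\simeq\ln^{k}\norm v$. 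In every case $[A:\Lambda]\succeq\ln^{k}(n)$, which with the upper bound gives $\RF_H(n)\simeq\ln^{k}(n)$.

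I expect the main obstacle to be the structural step of the second paragraph: showing that away from $S$ the finite $F$-invariant quotients of $A$ decompose along the isotypic components with the $U_i$-part forced to have $p$-valuation a multiple of $\delta_i$ in each cyclic summand, and organising a single small prime that realises the most split reduction needed for the upper bound. This rests on the Wedderburn decomposition of $\mathbb F_p[F]$, the vanishing of $\operatorname{Br}(\mathbb F_p)$, Morita invariance, and an effective density statement for primes in the required splitting class; the $\lcm$-trick behind the lower bound, and the verification that $k$ does not depend on the chosen $A$, are then routine.
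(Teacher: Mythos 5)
The statement is a \emph{loan theorem}: this paper does not prove it but cites it from \cite{dere2023residual}, so there is no internal proof to compare against line by line. Judging by the hints this paper does give --- that $k$ ``is closely related to the $\CC$-irreducible subrepresentations'' of $\rho: G \acts \ZZ^h$, that for irreducible holonomy $\RF_H(n)\simeq \ln^d(n)$ with $d$ the dimension of a $\CC$-irreducible constituent, that $\RF_H\simeq\ln(n)$ in the abelian $\ZZ/2\times\ZZ/2$-example, and that the cited lower bound is carried out for $\kgv\range n v_0$ --- your explicit constant $k=\max_i\delta_i$ and your $\lcm$-trick lower bound agree exactly with what \cite{dere2023residual} must be doing, and your upper bound via split primes $p\equiv 1\pmod{\exp F}$ mirrors the apparatus this paper itself uses (Lemma \ref{prop:modpexists} and the $\varpi_\chi$-maps).

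The argument as written is correct: the reduction to $F$-invariant finite-index sublattices of $A$ is exactly the right normalization of $\RF_H$; the upper bound correctly combines Lemma \ref{prop:pnondevider}-style prime selection with the observation that every simple $\mathbb{F}_p[F]$-summand of $A/pA$ has $\mathbb{F}_p$-dimension among the $\delta_i\le k$ once $p\nmid|F|$ and $p\equiv1\pmod{\exp F}$; and in the lower bound the $p\in S$ branch gives an exponential jump in $t$ while the $p\notin S$ branch forces the $U_{i^*}$-component of $A/\Lambda^{(p)}$ to contain an element of $p$-power order $>t$, hence by Morita a subquotient of size $\ge p^{ak}>t^k\asymp\ln^k\norm v$. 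The one place you should tighten the write-up is the structural step: the vanishing of the local Schur index at primes $p\nmid|F|$, and hence that the block of $\ZZ_p[F]$ over $U_i$ is $\prod_j M_{\delta_i}(\mathcal O_j)$ with $\mathcal O_j$ unramified, is a genuine theorem in integral representation theory (Curtis--Reiner; it is not literally ``Wedderburn plus $\operatorname{Br}(\mathbb F_p)=0$'', which only controls $\mathbb F_p[F]$ and then requires a lifting/maximal-order argument to pass to $\ZZ_p[F]$). This paper, and presumably \cite{dere2023residual}, sidestep that reference by working directly with the integral idempotent-like maps $\varpi_\chi$ of Section \ref{sec:prereq} and the base change $\ZZ[\zeta_G]\to\ZZp$; your structural route is cleaner conceptually but more reference-heavy. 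With that citation made precise, the proposal is a complete proof of the statement with the same exponent.
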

\noindent	The constant $k$ in this theorem can be computed explicitly, and is closely related to the $\CC$-irreducible subrepresentations of an induced representation $\rho: G \acts \ZZ^h$ of some finite group $G$, with $h$ the Hirsch length of $H$.

	In this paper we study the conjugacy separability function for the same class of groups. Our main result looks very similar to the one above, but then for the conjugacy separability function.	
	\setcounter{duptheorem}{0}
	\begin{duptheorem}
		Let $H$ be a finitely generated virtually abelian group of Hirsch lenght $h$, then there exists a $0 \leq k \leq h$ such that $$
		\conj_H(n)\simeq \ln^k(n).
		$$
	\end{duptheorem}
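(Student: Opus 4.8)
The plan is to reduce conjugacy separation in $H$ to a finite collection of ``orbit‑separability'' problems for finite groups acting linearly on free abelian groups, and to solve each of these by a quantitative prime‑avoidance argument, paralleling the treatment of the residual finiteness function in \cite{dere2023residual}. First I would fix a finite‑index normal subgroup $\Lambda\cong\ZZ^h$ of $H$ on which the conjugation action factors through an integral representation $\rho\colon G\to\mathrm{GL}_h(\ZZ)$ of the finite group $G=H/\Lambda$, together with bounded‑length coset representatives and the resulting set‑theoretic section with its $2$‑cocycle. Since $\conj_H$ is well defined up to $\simeq$ independently of the generating set, and conjugating an element or projecting to a quotient lattice changes word length only by bounded multiplicative/additive constants, one may perform these normalisations freely.

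The \textbf{lower bound} $\conj_H(n)\succeq\ln^k(n)$ has two sources. The trivial estimate $\conj_H(n)\ge\RF_H(n)$ (a quotient in which $\varphi(h)\neq 1$ already refuses to conjugate $h$ into $1$) gives $\conj_H\succeq\ln^{k_0}(n)$ with $k_0$ the exponent of the loan theorem. The second ingredient is a family of \emph{colliding pairs}: for each conjugacy class $[\bar g]$ of $G$ I would produce, using small smooth integers and units inside the number fields attached to the $\QQ$‑irreducible constituents of $\mathrm{coker}(\id-\rho(\bar g))\otimes\QQ$ viewed as a $C_G(\bar g)$‑representation, elements $a_n,b_n\in H$ of word length $\asymp n$ that are genuinely non‑conjugate in $H$ yet become conjugate in every finite quotient of size at most $\varepsilon\ln^{k_{\bar g}}(n)$. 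Setting $k=\max\{k_0,\max_{[\bar g]}k_{\bar g}\}$ then yields the bound, and $k\le h$ because each $k_{\bar g}\le\operatorname{rk}\mathrm{coker}(\id-\rho(\bar g))=\dim_\QQ\ker(\id-\rho(\bar g))\le h$.

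For the \textbf{upper bound}, let $a,b\in H$ be non‑conjugate with $\norm{a}_S,\norm{b}_S\le n$. If $\bar a,\bar b$ are non‑conjugate in $G$, the fixed finite quotient $G$ separates them, with constant cost. Otherwise, after conjugating $b$ by a bounded‑length element we may assume $\bar a=\bar b=\bar g$; writing $a=\lambda t_{\bar g}$, $b=\mu t_{\bar g}$ with $\lambda,\mu\in\Lambda$ one checks that $a\sim_H b$ if and only if the images of $\lambda$ and $\mu$ in $\Lambda/(\id-\rho(\bar g))\Lambda$ lie in one orbit of the $2$‑cocycle–twisted affine action of $C_G(\bar g)$, the finitely many cocycle translations costing only a bounded factor. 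So it suffices, for each of the finitely many classes $[\bar g]$, to separate non‑equivalent points of a lattice $\ZZ^{m_{\bar g}}$ (the free part of $\Lambda/(\id-\rho(\bar g))\Lambda$, $m_{\bar g}\le h$) under a finite linear group action, using finite quotients. I would bound this orbit‑separability function by decomposing $\QQ^{m_{\bar g}}$ into isotypic components with division‑algebra endomorphism rings: to certify that $b$ avoids the (at most $\lvert C_G(\bar g)\rvert$–element) orbit of $a$, for each group element $\sigma$ with $\sigma a\neq b$ choose a component on which $\sigma a-b$ is non‑zero and a rational prime $p$ of size $O(\ln n)$ such that $\sigma a\not\equiv b$ modulo the invariant sublattice obtained by reducing that component mod $p$; the index accumulated over all $\sigma$ and all relevant components is $O(\ln^{k_{\bar g}}(n))$, with $k_{\bar g}$ the same combinatorial invariant as in the lower bound. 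Taking the maximum over $[\bar g]$ and combining with the trivial cases finishes the proof.

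The step I expect to be the main obstacle is the matching \emph{lower} bound: producing genuinely non‑conjugate pairs that resist all small quotients requires controlling simultaneously the arithmetic of \emph{all} isotypic components — so that cheap reductions in the ``easy'' components cannot be exploited to certify non‑conjugacy — together with the twisting by the $2$‑cocycle, and it is precisely here that the value of $k$, and in particular the fact that it can exceed the residual finiteness exponent $k_0$ when $\rho$ is not $\QQ$‑irreducible, is forced. A secondary but necessary technical point is checking that the reduction to the $\Lambda/(\id-\rho(\bar g))\Lambda$–problem is compatible with word lengths, i.e.\ that conjugating by bounded‑length elements and projecting to this quotient lattice distorts norms only by bounded constants, so that an instance of size $n$ becomes an instance of size $O(n)$.
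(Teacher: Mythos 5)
Your high-level strategy matches the paper's: embed $H$ in a semi-direct product $M\rtimes G$ with $M$ free abelian and $G$ finite, translate conjugacy to membership in $W_g(M)=\Ima(\id-\rho(g))$ over each conjugacy class $[\bar g]$ of $G$, decompose into isotypic pieces, and use prime-avoidance to build small separating quotients. That much is sound and is essentially what the paper does (Proposition \ref{prop:virtabEmbedsInSplitVirtab:lemma}, Lemma \ref{prop:conjugatesinrepresentation:lemma}, and Sections \ref{sec:3->1}--\ref{sec:1->2}).

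However, there are two genuine gaps. First, you never actually construct the lower bound; you flag the "colliding pairs" as the main obstacle and give only a general direction. But this is precisely the crux of the theorem: the content of the result is that the exponent $k$ is realised, and in particular can strictly exceed the residual-finiteness exponent $k_0$ when $\rho$ is reducible. The paper devotes all of Section \ref{sec:2->3} and \ref{sec:3->1} to this, and the nontrivial point is the exact combinatorial definition of $k$ via "admitting sets" $K\subset\range{l}$ (with a case analysis into strongly, locally and globally unsolvable pairs), together with the sequences $(k_1+\abs{G}^4\kgv\range{n}v_1,g)$ that force $\ln^{\dim K}$-sized quotients. Without this construction you have no matching lower bound, so the theorem is not proved. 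Second, your upper-bound accounting is imprecise in a way that matters: choosing a fresh prime for each $\sigma\in C_G(\bar g)$ would generically give an exponent of order $\sum_\sigma d_{i_\sigma}$, which overcounts. The paper's key technical insight (Lemmas \ref{prop:conjsepnotinW':lemma} and \ref{prop:conjMultipleCoppies:lemma}) is that a single quotient of size $O(\ln^{d_i}n)$ separates the whole finite set $\set{\pi_i(v_1-\rho(h)v_2)\mid h\in C(g)}$ from $V_g(M_i)$ at once, so only one prime per component in a minimal admitting set $K$ is needed; this is what makes the exponent $\dim K$ rather than something larger. You also need a separate bounded-quotient argument for the "strongly unsolvable" case, where $\sigma a-b$ lies in $V_g(M)\setminus W_g(M)$ (all isotypic projections vanish up to torsion), which your prime-avoidance step cannot detect; the paper handles this via Lemma \ref{prop:unsolvableInProjection:lemma} and the quotient by $\abs{G}^3M$.
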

\noindent This is a consequence of Theorem \ref{prop:conjugacyseparabilittysplitvirtuallyabelian:theorem} which gives a more precise characterisation of the constant $k$ in terms of the $\QQ$-irreducible subrepresentations of $\rho: G \acts \ZZ^h$ as mentioned above. However, the exact statement of this theorem is quite technical and as such will only be presented in full detail in Section \ref{sec:result}.
	
		In case the representation $\rho$ is irreducible over $\QQ$, we obtain that the conjugacy separability function is equivalent to the residual finiteness function as obtained in $\cite{dere2023residual}$. The residual finiteness function always forms a lower bound to the conjugacy separability function, but if the representation $\rho$ is not irreducible, then we give concrete examples showing that the conjugacy separability function may be strictly larger at the end of the same section.
	
	Section \ref{sec:prereq} will introduce some necessary background information, both on effective separability and on representation theory. Next, Section \ref{sec:result} is dedicated to the precise formulation of Theorem \ref{prop:conjugacyseparabilittysplitvirtuallyabelian:theorem}, followed by several consequences and examples of this result. The remaining sections are dedicated to proving Theorem \ref{prop:conjugacyseparabilittysplitvirtuallyabelian:theorem}, first by comparing the two different constants in the formulation and then by showing the lower and upper bound, respectively.
	\section{Preliminaries}\label{sec:prereq}
In this section, we recall some elementary notions about on the one hand separability and on the other hand representations of finite groups. Most details can be found in \cite{survey,isaacs2006character,MR3729310}.
	\numberwithin{lemma}{subsection}
	\subsection{Separability}
	First we introduce the necessary background about conjugacy separability. 
	\begin{definition}
		Let $H$ be a group with elements $h_1,h_2\in H$. We call $h_1$ and $h_2$ \textbf{conjugate}, denoted with $h_1\sim h_2$, if and only if there exists some $g\in H$ such that $gh_1g\inv=h_2$. The group $H$ is called \textbf{conjugacy separable}, if for every pair of elements $h_1,h_2\in H$ such that $h_1\not\sim h_2$, there exists some normal subgroup $N\normal H$  of finite index in $H$, such that $h_1N\not\sim h_2N$ in $H / N$.
	\end{definition}
	To associate a growth function to conjugacy separability on a finitely generated group $H$, we use the word norm.
	
	\begin{definition}
		Let $S$ be a finite generating set of the group $H$, then we define the \textbf{word norm} 
		$$\norm{h}_S=\begin{cases}\begin{matrix}
				0 &\text{if } g=e_H\\
				\min\set{n\in\NN\mid h=s_1s_2\cdots s_n\text{ with } s_i\in S\cup S\inv} &\text{otherwise.}
			\end{matrix}
		\end{cases}$$
	\end{definition}
	The word norm depends on the choice of generating set, but only by at most a constant.

	\begin{lemma}
		\label{prop:normAreEquiv}
		Let $H$ be a finitely generated group and let $S$ and $T$ be two finite generating subsets of $H$. There exists a constant $C > 0$ such that for every element $h \in H$ the following holds: $$\norm{h}_T\leq C\norm{h}_S $$
	\end{lemma}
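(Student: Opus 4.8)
The plan is to compare the two word metrics by substituting, for each letter of $S \cup S\inv$, a fixed $T$-word representing it, and then counting letters. First I would observe that, since $T$ generates $H$, each $s \in S$ has a finite norm $\norm{s}_T$, and since $S$ is finite I can set $C = \max\set{1,\ \max_{s \in S}\norm{s}_T}$, which is a positive constant depending only on $S$ and $T$. I would also record the elementary fact that $\norm{s\inv}_T = \norm{s}_T$, obtained by reversing a geodesic $T$-word for $s$ and inverting each letter, so that the bound $\norm{w}_T \leq C$ holds for every $w \in S \cup S\inv$.

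Next, given $h \in H$ with $n = \norm{h}_S$, I would take a geodesic expression $h = s_1 s_2 \cdots s_n$ with each $s_i \in S \cup S\inv$, where the case $h = e_H$ is trivial since both norms vanish. Replacing each $s_i$ by a chosen $T$-word of length $\norm{s_i}_T \leq C$ and concatenating, one obtains an expression of $h$ as a product of at most $Cn$ elements of $T \cup T\inv$. This yields $\norm{h}_T \leq C n = C\norm{h}_S$, which is exactly the claim.

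There is no genuine obstacle here: the statement is the standard bi-Lipschitz comparison of word metrics (equivalently, the assertion that the identity map $H$ with the $S$-metric to $H$ with the $T$-metric is Lipschitz), and the only points requiring a moment's care are the treatment of inverse letters, the trivial element, and the observation that the maximum is taken over the \emph{finite} set $S$, so that $C$ is a genuine constant independent of $h$.
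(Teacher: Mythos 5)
Your proof is correct and is the standard argument for the bi-Lipschitz equivalence of word metrics. The paper does not prove this lemma itself but instead refers to \cite{MR3729310}; your substitution argument (taking $C$ as a maximum of $T$-norms over the finite set $S$, handling inverses and the identity) is exactly the textbook proof one would find there.
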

	A proof for this can be found in for instance \cite{MR3729310}, as well as more general information on the word norm. The word metric allows us to talk about the following two functions.
	\begin{definition}
		Let $H$ be a conjugacy separable group, then the function $\conj_H: H\times H\rightarrow \NN:$ is defined as 
		$$
		\conj_{H}(h_1,h_2)=\begin{cases}
			\begin{matrix}
				1&\text{if } h_1\sim h_2\\
				\min\left\{[H:N]\mid N\normal H:h_1N\not\sim h_2N\right\}&\text{otherwise.}
			\end{matrix}
		\end{cases}
		$$
		If $H$ is moreover finitely generated by $S$, then we define the \textbf{conjugacy separability growth function} $\conj_{H,S}: \NN_0\to\NN_0$ as 		$$
		\conj_{H,S}(n) =  \max\set{\conj_H(h_1,h_2)\mid h_1,h_2\in H,\norm{h_1}_S\leq n,\norm{h_2}_S\leq n}.
		$$
	\end{definition}
	This function was introduced first in \cite{LLM}. As the closed balls are finite, conjugacy separability guarantees that $\conj_{H,S}$ only takes finite values and is thus well defined. We will study $\conj_{H,S}$ up to the following equivalence relation.
	
	\begin{definition}
		Let $f,g:\NN_0\rightarrow\RR^+_0$ be non-descending functions. Then we write $f\prec g$ if there exists some natural constant $C$ such that $f(n)\leq Cg(Cn)$ for all $n\in \NN_0$. Furthermore if both $f\prec g$ and $g\prec f$, then we write $f\simeq g$.
	\end{definition}
	\begin{remark}
	Throughout the paper, we will often compare functions that also take negative values. If $f,g:\NN_0\rightarrow\RR$ are non-decreasing functions, then we denote $f \prec g$ if $\max(1,f)\prec\max(1,g)$, and similarly for $f \simeq g$. 
	\end{remark}
From the definitions above, the following is immediate (see \cite[Lemma 1.1]{bou2010quantifying}).
	\begin{lemma}\label{prop:generatorinvariance}
		Let $H$ be a finitely generated conjugacy separable group with finite generating sets $S$ and $T$, then $\conj_{H,S}\simeq\conj_{H,T}$.
	\end{lemma}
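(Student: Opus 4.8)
The plan is to exploit the fact that the function $\conj_H \colon H \times H \to \NN$ is intrinsic to the abstract group $H$: its definition refers only to conjugacy and to finite-index normal subgroups, never to a generating set. The generating set enters the growth function $\conj_{H,S}$ only through the word norm used to restrict the arguments $h_1,h_2$. Consequently, passing from $S$ to $T$ merely rescales the radius of the ball over which the maximum defining $\conj_{H,S}$ is taken, and the comparison of word norms provided by Lemma \ref{prop:normAreEquiv} controls exactly this rescaling.

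Concretely, I would first apply Lemma \ref{prop:normAreEquiv} to obtain a constant controlling $\norm{\cdot}_T$ in terms of $\norm{\cdot}_S$; enlarging it to a natural number $C \geq 1$ (replacing the constant by its ceiling, which is harmless since $\conj_{H,T}$ is non-descending), we may assume $\norm{h}_T \leq C\norm{h}_S$ for all $h \in H$. Then for any $h_1,h_2 \in H$ with $\norm{h_1}_S \leq n$ and $\norm{h_2}_S \leq n$ we get $\norm{h_1}_T, \norm{h_2}_T \leq Cn$, so the pair $(h_1,h_2)$ lies in the set over which the maximum defining $\conj_{H,T}(Cn)$ ranges. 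Taking the maximum over all such pairs yields
$$
\conj_{H,S}(n) \;\leq\; \conj_{H,T}(Cn) \;\leq\; C\,\conj_{H,T}(Cn),
$$
which is precisely $\conj_{H,S} \prec \conj_{H,T}$. Interchanging the roles of $S$ and $T$ and repeating the argument gives $\conj_{H,T} \prec \conj_{H,S}$, and combining the two inequalities gives $\conj_{H,S} \simeq \conj_{H,T}$.

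There is essentially no genuine obstacle here; the statement is a routine change-of-generating-set invariance, entirely analogous to \cite[Lemma 1.1]{bou2010quantifying}. The only mild point requiring care is the integrality of the constant demanded in the definition of $\prec$, which is dispatched by rounding the comparison constant of Lemma \ref{prop:normAreEquiv} up to a natural number and invoking monotonicity of the relevant growth functions.
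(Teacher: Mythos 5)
Your proof is correct and is exactly the argument the paper has in mind: the paper does not write out a proof, stating only that the lemma is immediate from the definitions and citing \cite[Lemma 1.1]{bou2010quantifying}, and your argument — apply Lemma \ref{prop:normAreEquiv} to compare word norms, observe that this rescales the ball over which the maximum is taken, and conclude $\conj_{H,S}(n) \leq \conj_{H,T}(Cn)$ with symmetry giving the reverse bound — is precisely the routine change-of-generating-set argument being alluded to. Nothing further to add.
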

\noindent Hence, the asymptotic behavior of the conjugacy separability function is an invariant of the group, and we will often write $\conj_{H,S}$ as $\conj_H$ if we study the asymptotics.

It is often convenient to only compute these functions at a certain values. The following lemmas will be helpful to compare functions in that perspective.
		\begin{lemma}\label{prop:logarithmicLowerBound:lemma}
		Let $f:\NN_0\rightarrow \RR^+_0$ be a non-decreasing function. Suppose there exists an integer $k$ such that for any integer $n \geq 2$ it holds that
		$$\ln^k(\kgv\range n)\leq f(\kgv\range n) ,$$
		then $\ln^k \prec f$
	\end{lemma}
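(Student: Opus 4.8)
The idea is that the sequence $a_m := \kgv\range{m}$ grows fast enough that the values $f(a_m)$, together with the hypothesis $\ln^k(a_m) \le f(a_m)$, already pin down $\ln^k \prec f$ everywhere, not just at the points $a_m$. The key quantitative fact is the prime number theorem in the form $\ln(\kgv\range{m}) = \psi(m) \sim m$, so in particular there are constants $c_1, c_2 > 0$ with $c_1 m \le \ln(a_m) \le c_2 m$ for all $m \ge 2$; even the crude elementary bounds (Chebyshev) suffice here, since we only need $\ln(a_m) \simeq m$ as functions of $m$.

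First I would fix an arbitrary integer $n \ge 2$ and locate it between consecutive terms of the sequence $(a_m)_{m \ge 2}$. Since $\ln(a_m) \simeq m$, there is a constant $D$ (depending only on the Chebyshev constants, hence absolute) such that for $m := \lceil D \ln(n) \rceil$ we have $\ln(a_m) \ge \ln(n)$, i.e. $a_m \ge n$. Now use monotonicity of $f$: because $n \le a_m$,
\[
f(a_m) \ge f(n).
\]
Wait --- that inequality goes the wrong way for a lower bound on $f(n)$. So instead I would bound $f(n)$ from below by comparing with a term $a_m$ that lies \emph{below} $n$. Concretely, pick $m$ as the largest index with $a_m \le n$; then $m \simeq \ln(n)$ as well (again by $\ln(a_m)\simeq m$, taking $m \ge c\ln(n)$ for a suitable constant $c$, which is possible once $n$ is large enough, and the finitely many small $n$ are absorbed into the $\simeq$-constant). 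By the hypothesis and monotonicity of $f$,
\[
f(n) \ge f(a_m) \ge \ln^k(a_m) \ge (c_1 m)^k \ge (c_1 c \ln(n))^k = (c_1 c)^k \ln^k(n).
\]
Thus $\ln^k(n) \le C' f(n)$ for a constant $C'$ and all $n \ge 2$ (and trivially we can arrange the bound at $n < 2$ by the $\max(1,\cdot)$ convention), which is exactly $\ln^k \prec f$ in the sense of the definition.

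The one genuine subtlety --- the "main obstacle" --- is making sure that when we pass from a given $n$ to the largest $a_m$ below it, the resulting index $m$ is still $\gtrsim \ln(n)$ rather than something much smaller; this is precisely where the lower Chebyshev bound $\ln(\kgv\range{m}) \le c_2 m$ (equivalently $a_{m} \le e^{c_2 m}$) is used, since it guarantees $a_{\lfloor \ln(n)/c_2 \rfloor} \le n$, so the maximal such $m$ is at least $\lfloor \ln(n)/c_2 \rfloor \ge c\ln(n)$. Everything else is bookkeeping with the constants in the definition of $\prec$ and the convention about negative values; in particular one should note $k$ is a fixed integer, so raising the linear estimate $m \gtrsim \ln(n)$ to the $k$-th power only changes the multiplicative constant. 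I would present the Chebyshev estimate $\ln(\kgv\range{m}) \asymp m$ either as a cited fact or with the standard two-line argument via $\binom{2m}{m}$, and then the rest of the proof is the chain of inequalities above.
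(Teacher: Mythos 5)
Your proof is correct, but it takes a genuinely different route from the paper's, and a heavier one. Both arguments are interpolation arguments: given $n$, compare $f(n)$ to $f(a_m)$ for a nearby value $a_m=\kgv\range{m}$ and control the gap. The difference is in how the gap is controlled. You import the Chebyshev estimate $\ln\bigl(\kgv\range{m}\bigr)=\psi(m)\asymp m$ and use it in \emph{both} directions: the lower bound $\ln(a_m)\geq c_1 m$ converts $\ln^k(a_m)$ into $(c_1 m)^k$, and the upper bound $\ln(a_m)\leq c_2 m$ is what guarantees the largest index $m$ with $a_m\leq n$ satisfies $m\gtrsim\ln(n)$. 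That is a real number-theoretic input. The paper avoids it entirely: taking $n$ with $\kgv\range{n}\leq x\leq\kgv\range{n+1}$, it uses only the trivial recursion $\kgv\range{n+1}\leq(n+1)\kgv\range{n}$ together with the equally trivial bounds $\kgv\range{n}\geq\frac{n+1}{2}$ and $\kgv\range{n}\geq 2$, splitting $3\ln(\kgv\range{n})\geq\ln(\kgv\range{n})+\ln\bigl(\frac{n+1}{2}\bigr)+\ln 2=\ln\bigl((n+1)\kgv\range{n}\bigr)\geq\ln(\kgv\range{n+1})\geq\ln(x)$, hence $\ln^k(x)\leq 3^k\ln^k(\kgv\range{n})\leq 3^k f(x)$ with an explicit constant and no appeal to the distribution of primes. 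Your version is correct but citation-dependent where the paper is self-contained. One small bookkeeping point you should make explicit: absorbing the finitely many small $n$ into the constant is legitimate because $f$ is non-decreasing and $f(2)=f(\kgv\range{2})\geq\ln^k(2)>0$, so $f$ is bounded away from zero on $n\geq 2$; without noting this the absorption step has a (minor) hole.
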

	\begin{proof}
		Let $x>2$ be arbitrary, then there exists some $n\geq 2$ such that $$\kgv\range n\leq x\leq \kgv\range{n+1}\leq(n+1)\kgv\range n.$$
		We then have the following sequence of inequalities:\begin{align*}
			f(x)&\geq f(\kgv\range n)\\
			&\geq \ln^k(\kgv\range n)\\
			&=\frac{1}{3^k}\left(\ln\left(\kgv\range n\right)+\ln\left(\kgv\range n\right)+\ln\left(\kgv\range n\right)\right)^k\\
			&\geq \frac{1}{3^k}\left(\ln(\kgv\range n)+\ln\left(\frac{n+1}{2}\right)+\ln(2)\right)^k\\
			&\geq \frac{1}{3^k}\ln^k\left(\kgv\range n (n+1)\right)\\
			&\geq \frac{1}{3^k}\ln^k(\kgv\range{n+1})\\
			&\geq \frac{1}{3^k}\ln^k(x).
		\end{align*}
		This demonstrates what we needed to show.
	\end{proof}
	\begin{lemma}
		\label{prop:comparegeometric}
		Let $f,g:\NN_0\rightarrow\RR^+_0$ be non-decreasing functions. Suppose that there exists a constant $C>1$ and an increasing sequence of natural numbers $(n_i)_{i\in \NN}$ such that for every natural number $m$, there exists some $i \in \NN$ with $n_i\in[m,Cm]$.  If $f(n_i)\leq g(n_i)$ for all $n_i$ in this sequence, then $f\prec g$.
	\end{lemma}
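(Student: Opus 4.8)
The plan is to transfer a bound at an arbitrary point to a bound at a nearby point of the given sequence, using nothing more than the monotonicity of $f$ and $g$. Fix an arbitrary $n\in\NN_0$ with $n\geq 1$ and apply the hypothesis with $m=n$: this yields an index $i$ such that $n\leq n_i\leq Cn$. Since $f$ is non-decreasing and $n\leq n_i$ we get $f(n)\leq f(n_i)$; by assumption $f(n_i)\leq g(n_i)$; and since $g$ is non-decreasing and $n_i\leq Cn\leq\lceil C\rceil n$ we get $g(n_i)\leq g(\lceil C\rceil n)$. Chaining these inequalities gives
\[
f(n)\leq g\bigl(\lceil C\rceil n\bigr)\leq \lceil C\rceil\, g\bigl(\lceil C\rceil n\bigr),
\]
which is exactly the inequality in the definition of $f\prec g$ with the natural constant $\lceil C\rceil$ (the ceiling is taken only to ensure the constant is an integer, as required).

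It then remains to absorb the finitely many small values of $n$ not handled above — for instance $n=0$, or whatever values the hypothesis excludes for $m$. Here I would simply enlarge the constant: letting $n_{i_0}$ denote the first term of the sequence, for every such small $n$ monotonicity gives $f(n)\leq f(n_{i_0})\leq g(n_{i_0})$, a fixed constant, so choosing $C'=\max(\lceil C\rceil,\lceil g(n_{i_0})\rceil)$ makes $f(n)\leq C'\,g(C'n)$ hold once we compare $\max(1,f)$ against $\max(1,g)$ as in the convention of the preceding remark. Combining this with the estimate of the first paragraph proves $f\prec g$.

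There is no genuine obstacle in this argument: it is entirely a monotonicity sandwich, in the same spirit as the proof of Lemma~\ref{prop:logarithmicLowerBound:lemma}. The only point that needs a little care is the bookkeeping — making the multiplicative constant a positive integer and checking that the final inequality is valid at every $n\in\NN_0$, including the boundary values where $g$ might vanish, which is precisely what the $\max(1,\cdot)$ convention is there to take care of.
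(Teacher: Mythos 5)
Your proof is correct and follows essentially the same monotonicity sandwich as the paper: pick $n_i\in[m,Cm]$, then $f(m)\leq f(n_i)\leq g(n_i)\leq g(Cm)$. You are only slightly more careful than the paper in rounding $C$ to an integer and in absorbing small values via the $\max(1,\cdot)$ convention, but the argument is the same.
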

	\begin{proof}
		Let $m$ be arbitrary and take $i \in \NN$ such that $n_i \in [m,Cm]$. Then we have the following inequalities:
		\begin{align*}
			g(Cm)&\geq g(n_i)\\
			&\geq f(n_i)\\
			&\geq f(m)
		\end{align*}
		Since this holds for all values of $m$, we thus have $g\prec g$.
	\end{proof}
	The lemma above states that lower bounds of the conjugacy separability function need not be checked everywhere, but just for a sufficiently dense set of values. Clearly, it suffices to demonstrate bounds on a geometric sequence. Also the set of primes on an arithmetic progression is sufficiently dense:
	
	\begin{lemma}\label{prop:psubgeom}
		Let $a$ and $b$ be coprime positive integers and $p_i$ be the increasing sequence of primes in the set $a+b\NN$. Then there exists some constant $C\in\NN$ such that for every natural number $m$, there lies a prime number in the interval $[m,Cm]$.
	\end{lemma}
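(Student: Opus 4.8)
The statement is to be read in the light of Lemma~\ref{prop:comparegeometric}: it asserts that the sequence $(p_i)$ of primes in $a+b\NN$ is ``sufficiently dense'', meaning there is a constant $C$ so that every interval $[m,Cm]$ (with $m\in\NN$) contains some term $p_i$. The plan is to deduce this from the prime number theorem for arithmetic progressions, which is available precisely because $\gcd(a,b)=1$; indeed this coprimality hypothesis enters exactly here, since otherwise $a+b\NN$ contains at most one prime.

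First I would recall that, writing $\pi(x;b,a)$ for the number of primes $p\le x$ with $p\equiv a \pmod b$, one has $\pi(x;b,a)\sim \frac{1}{\varphi(b)}\,\frac{x}{\ln x}$ as $x\to\infty$. From this asymptotic a short computation gives $\pi(2x;b,a)-\pi(x;b,a)\sim \frac{1}{\varphi(b)}\,\frac{x}{\ln x}\to\infty$, so there is a real number $x_0>0$ such that for every real $x\ge x_0$ the interval $(x,2x]$ contains at least one prime $\equiv a \pmod b$, i.e.\ at least one term of the sequence $(p_i)$. This is the only non-elementary ingredient, and it is really the whole content of the lemma: once a prime in the progression is known to lie in $(x,2x]$ for all large $x$, consecutive terms $p_i,p_{i+1}$ have bounded ratio and the rest is bookkeeping.

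To finish, let $i_0$ be the smallest index with $p_{i_0}\ge x_0$ and put $C:=\max(2,p_{i_0})$. Given $m\in\NN$, if $m\le p_{i_0}$ then $p_{i_0}\in[m,p_{i_0}]\subseteq[m,Cm]$ because $m\ge 1$. If instead $m> p_{i_0}$, then the set $\{\,i\mid p_i<m\,\}$ is finite and contains $i_0$, hence has a largest element $i\ge i_0$; by maximality $p_{i+1}\ge m$, while applying the previous step with $x=p_i\ge x_0$ produces a prime of the progression in $(p_i,2p_i]$, which forces $p_{i+1}\le 2p_i<2m\le Cm$. In either case $[m,Cm]$ contains a term $p_i$, which is what had to be shown. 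There is no genuine obstacle beyond invoking the prime number theorem in arithmetic progressions; one could get by with the weaker statement that $\pi(x;b,a)$ grows fast enough (Linnik's theorem would already suffice), but the version above is the cleanest.
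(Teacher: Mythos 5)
Your proof is correct and uses exactly the same ingredient as the paper, namely the prime number theorem for arithmetic progressions (de la Vall\'ee Poussin); the paper simply states that the lemma ``is a consequence'' of that theorem, while you spell out the intermediate step (a prime of the progression in $(x,2x]$ for all large $x$) and the resulting bookkeeping with $C=\max(2,p_{i_0})$.
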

	\begin{proof}
		This is a consequence of the prime number theorem for arithmetic progressions as found and proven in \cite{delavallepoussain1899Recherces}.
	\end{proof}
	
	A class of groups for which the conjugacy separability is known, is the class of finitely generated abelian groups, see \cite{bou2010quantifying}.
	\begin{proposition}
		Let $H$ be an infinite finitely generated abelian group, then $\conj_H(n) \simeq \ln(n)$
	\end{proposition}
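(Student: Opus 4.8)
For an infinite finitely generated abelian group $H$, conjugacy is just equality, so $\conj_H(h_1,h_2)$ coincides with the residual finiteness value $\mathrm{RF}_H(h_1 h_2^{-1})$, and hence $\conj_H(n) \simeq \mathrm{RF}_H(n)$. Thus it suffices to show $\mathrm{RF}_H(n) \simeq \ln(n)$, which by the loan theorem quoted above is the case $h = k = 1$; but I would rather give a direct argument since this is stated as a standalone proposition. The plan is to prove the upper bound $\conj_H \prec \ln$ and the lower bound $\ln \prec \conj_H$ separately.

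\emph{Upper bound.} By Lemma \ref{prop:generatorinvariance} I may choose a convenient generating set. Write $H \cong \ZZ^r \times F$ with $r \geq 1$ and $F$ finite. Given a nontrivial element $h$ with $\norm{h}_S \leq n$, its coordinates in this decomposition are bounded by some $C_1 n$ (the generating set maps each generator to a bounded tuple). If $h$ has nontrivial $\ZZ^r$-part, some integer coordinate $m$ satisfies $1 \leq \abs{m} \leq C_1 n$; choosing a prime $p \leq C_1 n + 1$ not dividing $m$ (which exists since the product of all primes up to $C_1 n$ already exceeds $C_1 n$ once $n$ is large, by an elementary estimate, so not all of them can divide $m$), the quotient $\ZZ^r \times F \twoheadrightarrow \ZZ/p\ZZ \times F$ sending the relevant coordinate mod $p$ has order $p \cdot \abs{F} \leq (C_1 n + 1)\abs{F}$ and keeps $h$ nontrivial. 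If $h$ lies in $F$, the quotient $H \twoheadrightarrow F$ already works and has bounded order. In all cases the quotient has order $\prec n$, so $\conj_H(n) = \mathrm{RF}_H(n) \prec n$; and since a residual finiteness function of an infinite group is at least of some fixed growth, I should instead sharpen the prime bound: Bertrand's postulate (or the prime counting estimate) gives a prime in $[\abs{m}+1, 2\abs{m}+2]$ not dividing $m$ only when $m$ is prime, so more robustly pick a prime power just above the obstruction — cleanest is: there is a prime $p \leq \log_2(\abs{m}) + 1$... no. The correct elementary fact is that the least prime not dividing $m$ is $O(\ln m)$, since $\prod_{p \leq x} p = e^{(1+o(1))x}$, so if every prime $\leq x$ divided $m$ then $m \geq e^{(1+o(1))x}$, forcing $x \prec \ln m \prec \ln n$. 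Hence we get a quotient of order $\prec \ln n$, giving $\conj_H(n) \prec \ln(n)$.

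\emph{Lower bound.} Fix a primitive element, say the first standard generator $e_1$ of the $\ZZ^r$ factor, and consider the elements $h_N := N e_1$ for $N = \kgv\range{n}$. Then $\norm{h_N}_S \leq C_2 N$ for a constant $C_2$. In any finite quotient $\varphi : H \to Q$ in which $\varphi(h_N) \neq e_Q$, the order of $\varphi(e_1)$ in $Q$ does not divide $N = \kgv\range{n}$, hence is not among $1, \dots, n$, so $\abs{Q} \geq \mathrm{ord}(\varphi(e_1)) \geq n+1$... actually $\mathrm{ord}(\varphi(e_1))$ could still be large but bounded below only by "not dividing $\kgv\range n$", which does force it $> n$. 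Thus $\conj_H(h_N, e_H) = \mathrm{RF}_H(h_N) \geq n+1 \geq \ln(\kgv\range n)$ trivially once we compare correctly; more to the point, $\mathrm{RF}_H(h_N) > n$ and $\ln(\kgv\range n) \leq \ln(n^n) = n\ln n$, so I need the converse-direction estimate $\ln(\kgv\range n) \leq \mathrm{RF}_H(h_N)$, which holds since $\mathrm{RF}_H(h_N) > n \geq \ln(\kgv\range n)$ fails — instead use that $\kgv\range n \geq e^{(1-o(1))n}$ by the prime number theorem, so $\ln(\kgv\range n) \simeq n$, and $\mathrm{RF}_H(h_N) \geq n$ gives $\ln(\kgv\range n) \prec \mathrm{RF}_H(h_N)$ evaluated along this sequence. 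By Lemma \ref{prop:logarithmicLowerBound:lemma} (with $k = 1$, after checking $\ln(\kgv\range n) \leq f(\kgv\range n)$ where $f = \conj_H$, using $\norm{h_N}_S \simeq \kgv\range n$) this yields $\ln \prec \conj_H$.

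\emph{Main obstacle.} The only delicate point is matching the word norm of the witnessing elements $Ne_1$ with the argument $n$ of the function: one must verify $\norm{Ne_1}_S$ is comparable to $N = \kgv\range n$ (it is, up to a multiplicative constant depending on $S$), and then invoke Lemma \ref{prop:logarithmicLowerBound:lemma} rather than evaluating $f$ at $n$ directly. Everything else is standard number theory (the size of $\kgv\range n$ and of the least prime not dividing a given integer) together with the triviality that in an abelian group conjugacy separability and residual finiteness coincide.
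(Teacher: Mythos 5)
The paper does not actually prove this proposition: it cites \cite{bou2010quantifying} and remarks that it follows from Lemma \ref{prop:pnondevider}, so there is no ``paper's own proof'' to match against. Your approach is the natural one and is exactly what the paper's citation suggests. The reduction $\conj_H \simeq \mathrm{RF}_H$ via ``conjugacy is equality in an abelian group'' is correct (note only that $\conj_H(n)\le\mathrm{RF}_H(2n)$, not $\mathrm{RF}_H(n)$, but $\simeq$ absorbs this). Your upper bound --- bound the coordinates of $h$ by $C_1n$, then pick the least prime $p$ not dividing a nonzero coordinate, with $p\prec\ln n$ from $\prod_{q\le x}q = e^{(1+o(1))x}$ --- is correct and is precisely what Lemma \ref{prop:pnondevider} packages.

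The lower bound contains a genuine, if small, gap. You take $h_N = \kgv\range{n}\,e_1$ and correctly deduce that any quotient separating $h_N$ from the identity has order $>n$, so $\conj_H(\kgv\range{n}) > n$. You then want to invoke Lemma \ref{prop:logarithmicLowerBound:lemma}, whose hypothesis (with $k=1$) is the \emph{pointwise} inequality $\ln(\kgv\range{n}) \le \conj_H(\kgv\range{n})$ for all $n\ge 2$. You have $\conj_H(\kgv\range{n}) > n$, but $\ln(\kgv\range{n}) = \psi(n)$ is \emph{not} bounded above by $n$: by Littlewood, $\psi(n) > n$ for infinitely many $n$, and even the explicit Rosser--Schoenfeld bound only gives $\psi(n) < 1.04\,n$. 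Your own text repeatedly flags this (``$n \ge \ln(\kgv\range{n})$ fails''), pivots to ``$\ln(\kgv\range{n}) \simeq n$'', and then nevertheless re-invokes Lemma \ref{prop:logarithmicLowerBound:lemma} ``after checking $\ln(\kgv\range{n}) \le f(\kgv\range{n})$'', a check you have not made. The fix is routine: either redo the three-line proof of Lemma \ref{prop:logarithmicLowerBound:lemma} allowing a multiplicative constant (its argument is insensitive to this), or argue directly --- for $x\in[\kgv\range{n},\kgv\range{n+1}]$ one has $\conj_H(x)\ge \conj_H(\kgv\range{n}) > n$ while $\ln x \le \psi(n+1) \le 1.04(n+1) \le 2.08\,n < 2.08\,\conj_H(x)$ --- which gives $\ln\prec\conj_H$ without relying on the lemma's unverified hypothesis. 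You should also pin down the word-norm comparison: take a generating set containing $e_1$ so that $\norm{Ne_1}_S\le N$, rather than leaving the constant $C_2$ floating, since Lemma \ref{prop:logarithmicLowerBound:lemma} evaluates $f$ at $\kgv\range{n}$ itself.

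Finally, the write-up contains several false starts and self-corrections left in the text (the Bertrand detour, the contradictory ``which holds since \ldots\ fails''); a clean version should delete the abandoned attempts rather than narrate them.
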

	This is in fact a consequence of the following lemma, which is a consequence of the prime number theorem for arithmetic progressions, and which we will also need in our proofs.
	\begin{lemma}
		\label{prop:pnondevider}
		Let $a$ and $b$ be coprime integers. There exists a constant $C_a > 0$ such that for every strictly positive integer $n$, there exists some prime number $p$ such that $p \nmid n$ and $p =  b\mod a$. This prime can be chosen such that $p\leq C_a+C_a\ln(n)$.
	\end{lemma}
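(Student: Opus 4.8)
The plan is to derive this from the prime number theorem for arithmetic progressions applied to the progression $b + a\NN$; the only subtlety is ruling out the finitely many prime divisors of $n$. First I would fix the coprime pair $a,b$ and recall that by Dirichlet's theorem together with its effective (Chebyshev/de la Vallée Poussin) refinement, the number $\pi(x; a, b)$ of primes $p \leq x$ with $p \equiv b \pmod a$ satisfies $\pi(x;a,b) \geq c_a \frac{x}{\ln x}$ for some constant $c_a > 0$ and all $x$ larger than some threshold $x_0(a)$. This is precisely the input already invoked in Lemma \ref{prop:psubgeom}, so I may quote \cite{delavallepoussain1899Recherces} for it.

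Next I would bound the number of ``bad'' primes. Any prime $p$ dividing $n$ satisfies $p \leq n$, and the number of distinct prime divisors of $n$ is at most $\log_2 n$ (since a product of $r$ distinct primes is at least $2^r$). So among primes $p \leq x$ with $p \equiv b \pmod a$, at most $\log_2 n$ of them can divide $n$. It therefore suffices to choose $x$ so that $\pi(x; a, b) > \log_2 n$; then at least one prime $p \leq x$ with $p \equiv b \pmod a$ does not divide $n$. Using the lower bound $\pi(x;a,b) \geq c_a x / \ln x$, it is enough to have $c_a x / \ln x > \log_2 n$, and a routine check shows this holds once $x = C_a + C_a \ln n$ for a sufficiently large constant $C_a = C_a(a)$ (absorbing the threshold $x_0(a)$ and the case of small $n$ into the constant, and noting $x/\ln x$ grows, so for $x$ of size $\Theta(\ln n)$ the left side is of size $\Theta(\ln n / \ln\ln n)$, which eventually dominates $\log_2 n$ — wait, that is false).

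Let me correct the bookkeeping, since this is the one place where care is needed and is the main (mild) obstacle: $\pi(x;a,b) \gtrsim x/\ln x$ with $x \asymp \ln n$ gives only $\asymp \ln n / \ln \ln n$ primes, which is \emph{not} enough to beat $\log_2 n \asymp \ln n$. So instead I would take $x = C_a \ln n \cdot \ln\ln n$, or more cleanly $x = C_a \ln(n) \ln\ln(n)$; but the statement demands $x \leq C_a + C_a \ln n$. The resolution is that we do not need to beat \emph{all} prime divisors of $n$ simultaneously by counting — we only need one good prime. In fact the correct and simplest route is: the number of distinct prime divisors of $n$ that are $\equiv b \pmod a$ and lie in $[1,x]$ is at most the number $\omega_{a,b}(n)$ of distinct prime divisors of $n$ in that residue class, and the product of all such divisors is at most $n$, so if all primes $p \equiv b\pmod a$ with $p \leq x$ divided $n$, their product $\prod_{p \leq x,\, p \equiv b (a)} p$ would be at most $n$. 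But by the effective prime number theorem this product is $\geq \exp(c_a' x)$ for $x \geq x_0(a)$ (a Mertens-type / Chebyshev bound in arithmetic progressions). Hence $\exp(c_a' x) \leq n$, i.e. $x \leq c_a'^{-1} \ln n$, forces a contradiction as soon as $x > c_a'^{-1}\ln n$. Therefore choosing $x = C_a + C_a \ln n$ with $C_a > \max(x_0(a), c_a'^{-1})$ guarantees a prime $p \leq x$ with $p \equiv b \pmod a$ and $p \nmid n$, which is exactly the claim. The extra additive $C_a$ handles small $n$ where $\ln n$ is tiny, and the case $\gcd$ making the residue class nonempty is exactly coprimality of $a,b$. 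The main thing to get right is thus using the multiplicative (product-of-primes) form of the effective PNT in arithmetic progressions rather than the counting form, since the product grows exponentially in $x$ and hence dominates $n$ already at $x \asymp \ln n$.
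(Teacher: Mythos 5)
Your final argument is correct. The paper itself gives no proof of this lemma; it simply states that it is a consequence of the prime number theorem for arithmetic progressions, so there is no detailed proof to compare against. Your write-up fills that gap, and the key move is exactly the one you identify after your self-correction: the counting form $\pi(x;a,b)\gtrsim x/\ln x$ is too weak (at $x\asymp\ln n$ it yields only $\asymp \ln n/\ln\ln n$ primes, which does not dominate $\log_2 n$), whereas the Chebyshev/Mertens form $\theta(x;a,b)=\sum_{p\le x,\,p\equiv b\,(a)}\ln p\ge c_a' x$ for $x\ge x_0(a)$ gives a contradiction with $\prod_{p\le x,\,p\equiv b\,(a)}p\le n$ as soon as $x>c_a'^{-1}\ln n$. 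Two minor points to tighten: first, once $x\ge x_0(a)$ the bound $\theta(x;a,b)>0$ also guarantees the residue class contains at least one prime $\le x$, so the argument is not vacuous; second, the constant you obtain a priori depends on both $a$ and $b$, but since there are only $\varphi(a)$ residue classes coprime to $a$ you can take the maximum over $b$ to get a single $C_a$ as in the statement. For a final version you should delete the abandoned first attempt and present only the corrected product-of-primes argument.
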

	
	\subsection {Representations of finite groups}
	In this section, we recall the basics about representations of finite groups over an integral domain. From now on, a group $G$ will be finite unless explicitly otherwise stated and $R$ is a general integral domain although $R$ will be either $\ZZ$, $\QQ$, $\CC$, $\ZZ[\zeta_n]$, $\QQ[\zeta_n]$ or $\frac{\ZZ}{p\ZZ}$ in all examples and applications. Here $\zeta_n \in \mathbb{C}$ is a primitive $n$-th root of unity and $p$ is always a prime.
		
	\begin{definition}
	A \textbf{representation} of $G$ over $R$ is a group morphism $\rho:G\rightarrow \mathrm{Aut}_R(M)$ with $M$ an $R$-module. We will also denote this as $\rho:G\acts M$ in short. A \textbf{subrepresentation} of $\rho:G\acts M$ is a $R$-submodule $N \subset M$ that is invariant under $\rho$, meaning that for any $g\in G$ it holds that $\rho(g)(N)=N$. 
	\end{definition}
	Note that for a subrepresentation it suffices to check that $\rho(g)(N) \subset N$ for all $g \in G$. If $N$ is a subrepresentation of $\rho:G\acts M$, then $\rho$ also induces a map $\rho_N:G\acts N$. This induced map $\rho_N$ is a representation of $G$ on $N$, and sometimes we will call this the subrepresentation as well. We give two elementary examples.
	
	\begin{example}
	Every representation $\rho: G \acts M$ has the trivial subrepresentation $N = \{0\}$.
	\end{example}
	\begin{example}
		Let $\rho:G\acts M$ be a representation on a free $\ZZ$-module. For every $n \in \NN$ the submodule $nM$ is a subrepresentation of $\rho$. Indeed if $v\in M$, then $\rho(g)(nv)=n\rho(g)(v)$ and as such is $\rho(g)(nv)$ also an element of $nM$. 
	\end{example}
	
	We recall certain notions about representations.
	\begin{definition}
		Let $\rho:G\acts M$ and $\rho':G\acts N$ be two representations over some integral domain $R$. A morphism $\varphi:M\rightarrow N$ of $R$-modules is called a\textbf{ $G$-morphism} if it respects the action, that is such that for every $g\in G$, it holds that $\rho'(g)\circ \varphi=\varphi \circ \rho(g)$.
	\end{definition}
	
	\begin{definition}
		A representation over a field $F$ is \textbf{irreducible} if its only subrepresentations are the trivial representation and itself. 
	\end{definition}
	For integral domains that are not a field however, we need a slightly more general definition. To introduce it we first need to recall the tensor product.
	\begin{definition}
		Let $R$ be an integral domain containing $R' \subset R$ as a subring. Let $\rho:G\acts M$ be a representation on a free $R'$-module $M$, then we denote with $\rho\otimes_{R'}R$ the representation $\rho\otimes_{R'}R:G\acts M\otimes_{R'}R$, such that ${(\rho\otimes_{R'}R)(g)(m\otimes r)=\left(\rho(g)m\right)\otimes r}$.
	\end{definition}
This allows us to define irreducibility for general integral domains.
	\begin{definition}
		We call a representation over an integral domain $R$ \textbf{irreducible} if it is irreducible over the field of fractions $Q$ of $R$. More precisely, $\rho:G\acts M$ is irreducible if $G\acts M\otimes Q$ is irreducible as a $Q$-representation.
	\end{definition}
	\begin{lemma}\label{prop:iredIfSubspaceFiniteIndex:lemma}
		Let $R$ is an integral domain of characteristic $0$ that is finitely generated as a $\ZZ$-module and $\rho$ be a representation on a finitely generated free $R$-module. The representation $\rho$ is irreducible if and only if all subrepresentations $N$ are either $0$ or of finite index in $M$ (as abelian groups).
	\end{lemma}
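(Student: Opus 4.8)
The plan is to prove both implications by relating subrepresentations of $\rho$ over $R$ to subrepresentations of $\rho \otimes Q$ over the field of fractions $Q$ of $R$, using that $M$ is a finitely generated free $R$-module and that $R$ has characteristic $0$. Note first that since $R$ is a characteristic-$0$ domain, $Q \supseteq \QQ$, so $M \otimes_R Q$ is a $Q$-vector space of dimension $d = \operatorname{rank}_R M$, and the natural map $M \to M \otimes_R Q$ is injective; I will identify $M$ with its image, a full-rank $R$-lattice inside $M \otimes_R Q$. The key bridge is: for an $R$-submodule $N \subseteq M$ the $Q$-span $N_Q := N \otimes_R Q \subseteq M \otimes_R Q$ is a $G$-subrepresentation whenever $N$ is, and conversely, for a $Q$-subrepresentation $W \subseteq M \otimes_R Q$ the intersection $W \cap M$ is an $R$-subrepresentation of $M$ whose $Q$-span is exactly $W$ (because $W$ is spanned over $Q$ by vectors in $M$, as $M$ spans $M\otimes_R Q$ and one can clear denominators).

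For the forward direction, suppose $\rho$ is irreducible, i.e. $\rho \otimes Q$ has only $0$ and $M \otimes_R Q$ as subrepresentations, and let $N \subseteq M$ be a subrepresentation with $N \neq 0$. Then $N_Q$ is a nonzero $Q$-subrepresentation, hence $N_Q = M \otimes_R Q$, so $N$ has full rank $d$ in $M$; a full-rank subgroup of $\ZZ^d$ (and $M \cong R^d$ is finitely generated as a $\ZZ$-module of rank $d \cdot \operatorname{rank}_\ZZ R$, still free abelian of finite rank) has finite index, so $[M : N] < \infty$. For the converse, suppose every subrepresentation of $M$ is either $0$ or of finite index, and let $W \subseteq M \otimes_R Q$ be a $Q$-subrepresentation; set $N = W \cap M$, an $R$-subrepresentation of $M$ with $N_Q = W$. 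If $W \neq 0$ then $N \neq 0$ (clear denominators of any nonzero element of $W$ to land in $M$), so $[M:N] < \infty$; a finite-index subgroup of a free abelian group of rank $d\cdot\operatorname{rank}_\ZZ R$ still has that same rank, hence $N$ has full $R$-rank $d$, hence $N_Q = M \otimes_R Q$, i.e. $W = M \otimes_R Q$. Thus $\rho \otimes Q$ has only trivial subrepresentations and $\rho$ is irreducible.

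The main technical point to get right is the passage between "$R$-rank" and "finite index as abelian groups", which is where the hypotheses that $R$ is finitely generated as a $\ZZ$-module and of characteristic $0$ are used: finite generation over $\ZZ$ ensures $M$ is free abelian of finite rank equal to $d \cdot \operatorname{rank}_\ZZ R$, so that "full $R$-rank" and "finite index subgroup" coincide for $R$-submodules; characteristic $0$ ensures $Q \supseteq \QQ$ and that $M \hookrightarrow M \otimes_R Q$, so tensoring with $Q$ does not collapse anything. I expect the only mild subtlety is checking that $W \cap M$ really spans $W$ over $Q$ and that it is an $R$-module (both follow from $M$ being an $R$-lattice of full rank in $M \otimes_R Q$ together with $R \subseteq Q$), after which the equivalence is a direct bookkeeping of ranks.
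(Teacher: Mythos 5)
Your proof is correct and takes essentially the same route as the paper: pass between $R$-subrepresentations of $M$ and $Q$-subrepresentations of $M\otimes_R Q$ by clearing denominators, then translate full $R$-rank into finite $\ZZ$-index using that $M$ is free abelian of finite rank. The one place you compress what the paper spells out is the step from ``$N\otimes_R Q = M\otimes_R Q$'' to ``$[M:N]<\infty$'': the paper explicitly produces a single nonzero $\lambda\in R$ with $\lambda M\subseteq N$ (taking a product over a basis) and then argues that multiplication by $\lambda$ is a $\ZZ$-module injection of $M$ into itself of full $\ZZ$-rank, whereas you label this ``direct bookkeeping of ranks'' — you correctly flag it as the crux, and the argument is indeed routine, so the two proofs are the same in substance.
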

	\begin{proof}
		First assume all subrepresentations are either trivial or of finite index. Let $M$ be a free $R$-module and let $\rho:G\acts M$ be a representation. Let $N_Q$ be a non-trivial subrepresentation of $M\otimes_R Q$. Then take $n\in N_Q$ a non-zero element of $N$, which is by definition of the form $n=\frac{m}{r}$ with $m \in M$ and $r \in R$. It follows that $m=rn\in N_Q\cap M$. It thus also follows that $M\cap N_Q$ is a non-trivial subrepresentation on $R$, and thus of finite index in $M$. As $N_Q$ is a representation on a field of characteristic $0$, it follows that $N_Q=M\otimes_R Q$.
		
		For the converse let $M$ be once again a free $R$-module. Let $\rho:G\acts M$ be a representation and let $N$ be a non-trivial subrepresentation, then we will show that $N$ has finite index in $M$. Now $N\otimes Q$ is a subrepresentation of $M\otimes Q$ which is also non-trivial. As $M\otimes Q$ is irreducible, it follows that $M\otimes Q=N\otimes Q$. In particular, for every element $m\in M$, there exist some elements $\lambda_m\in R$ and $n_m\in N$ such that $m=\frac{n_m}{\lambda_m}$. Consider $\betha$ a free basis for $M$ as $R$-module and let $\lambda$ be $\prod_{m\in \beta}(\lambda_m)$, then $\lambda M$ must be contained in $N$. 
		
		To conclude we demonstrate that $\lambda M$ is of finite index in $M$. First as $R$ is an integral domain, it follows that left multiplication by $\lambda$ is injective. If we see $R$ as a $\ZZ$-module, then left multiplication by $\lambda$ is a morphism of $\ZZ$-modules. In particular, as $M$ is a free $R$-module, multiplication by $\lambda$ is an injective morphism of $\ZZ$-modules. As $M$ is of finite dimension over $R$, and $R$ is finitely generated as a group, $M$ is also of finite dimension over $\ZZ$. As $\lambda M$ is a $\ZZ$-submodule of the same dimension as $M$, $\lambda M$ must have finite index.
	\end{proof}
	
	\begin{example}
		Let $M$ be the module $\ZZ^2$ and let $G$ be the group $\frac{\ZZ}{4\ZZ}=\set{\overline0,\overline1,\overline2,\overline3}$. Define $\rho$ via $\rho(\overline1) = \begin{pmatrix} 0 & -1 \\ 1 & 0 \end{pmatrix}$, so $\rho(\overline1)$ is a rotation over $\frac{\pi}{2}$ radians in the plain. This action is irreducible over $\ZZ$. Indeed, suppose that $N$ is a non-trivial subrepresentation, containing the non-trivial element $(x,y)$. As $N$ is $\rho$-invariant, then $\rho(\overline1)(x,y)=(-y,x)$ must lie in $N$ as well. As $(x,y)$ and $(-y,x)$ are linearly independent, it follows that the subgroup they generate must have rank at least $2$, which is the same rank as $M$. It follows that $M_1$ must be of finite index in $\ZZ^2$. Since this holds for all subrepresentations, it must hold that $\rho$ is irreducible.
	\end{example}
	\begin{definition}
		Let $M$ be an $R$-module and $N$ be a submodule, then the \textbf{radical} $\sqrt{N}$ is the module containing the following elements$$
		\sqrt{N}=\set{m\in M\mid \exists r\in R \setminus \{ 0 \} : rm\in N}
		$$
	\end{definition}
	When $M$ is a $\ZZ$-module and thus an abelian group, then the radical is also known as the isolator subgroup.
	\begin{lemma}\label{prop:radicalIsRepresentation:lemma}
		Let $M$ be an $R$-module and $\rho:G\acts M$ be a representation. For every subrepresentation $N \subset M$, it holds that $\sqrt{N}$ is also a subrepresentation of $M$.
	\end{lemma}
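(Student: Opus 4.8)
The plan is to verify directly from the definition that $\sqrt N$ is an $R$-submodule of $M$ and that it is stable under the action; by the remark following the definition of subrepresentation it then suffices to check the inclusion $\rho(g)(\sqrt N)\subset\sqrt N$ for every $g\in G$.

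First I would check that $\sqrt N$ is a submodule. Given $m_1,m_2\in\sqrt N$, choose $r_1,r_2\in R\setminus\{0\}$ with $r_im_i\in N$. Since $N$ is a submodule, $r_1r_2(m_1+m_2)=r_2(r_1m_1)+r_1(r_2m_2)\in N$, and $r_1r_2\neq 0$ because $R$ is an integral domain; hence $m_1+m_2\in\sqrt N$. Likewise, for any $s\in R$ we have $r_1(sm_1)=s(r_1m_1)\in N$, so $sm_1\in\sqrt N$, and of course $0\in N\subset\sqrt N$. This is the only place the integral domain hypothesis is used.

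For invariance, let $m\in\sqrt N$ and $g\in G$, and pick $r\in R\setminus\{0\}$ with $rm\in N$. Since $\rho(g)$ is $R$-linear, $r\cdot\rho(g)(m)=\rho(g)(rm)$, which lies in $\rho(g)(N)=N$ because $N$ is a subrepresentation. As $r\neq 0$, this exhibits $\rho(g)(m)\in\sqrt N$, so $\rho(g)(\sqrt N)\subset\sqrt N$ and we are done.

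I do not expect a genuine obstacle here: the statement is a routine consequence of $R$ being an integral domain (to keep the product $r_1r_2$ of denominators nonzero) together with $R$-linearity of the maps $\rho(g)$. The only mild point of care is remembering to invoke the integral domain property when checking closure under addition.
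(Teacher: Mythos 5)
Your argument is correct and the invariance step is exactly the paper's proof: for $m\in\sqrt N$ with $rm\in N$, one computes $r\rho(g)m=\rho(g)(rm)\in N$. You additionally verify that $\sqrt N$ is a submodule (using that $R$ is an integral domain so $r_1r_2\neq 0$), a routine point the paper leaves implicit.
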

	\begin{proof}
		Take $m \in \sqrt{N}$ arbitrary, then there exists $r\in R \setminus \{0\}, n\in N$ such that $rm=n$. For any $g\in G$, we have $r\rho(g)m=\rho(g)rm=\rho(g)n\in N$. It follows that $\rho(g)m$ also belongs to $\sqrt{N}$
	\end{proof}
	
	To end this section, we discuss the behaviour of a representation on a field, after tensoring with a field extension.
	\begin{definition}
		Let $F\subset E$ be a finite degree Galois extension, with corresponding Galois group $\Gal(E,F)$. For every vector space $M$ over $F$, the action of $\Gal(E,F)$ on $M\otimes_FE$ is defined as $\sigma(m\otimes x)=m\otimes(\sigma(x))$ for any $\sigma \in \Gal(E,F)$ and $m\otimes x$ in $M\otimes_FE$. 
	\end{definition}
	\begin{loanthm}
	[{\cite[Theorem 9.21]{isaacs2006character}}]	\label{thm:shur}
		Let $F\subset E$ be a finite degree Galois extension of fields of characteristic $0$ and $\rho:G\acts V$ an irreducible representation on a vector space $V$ over $F$. Then there exists an integer $m$, called the Shur index, such that the representation $\rho\otimes_FE$ can be written as $\displaystyle \bigoplus_{i\in I}\rho_i^m$, where $\rho_i = \rho_{V_i}$ for $V_i \subset V$ are non-isomorphic irreducible subrepresentations over $E$. Moreover, $m \mid \abs {G}$ and the $V_i$ can be chosen such that they are conjugate under the action of $\Gal(E,F)$ on the set of irreducible subrepresentations.
	\end{loanthm}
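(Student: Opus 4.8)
The plan: the statement is a classical result in the representation theory of finite groups, and the proof I have in mind combines Maschke semisimplicity with a Galois descent argument, importing only the divisibility $m\mid\abs G$ from Schur index theory.

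First I would record the decomposition. Since $F$, and hence $E$, has characteristic $0$, Maschke's theorem shows that $\rho\otimes_F E$ is semisimple, so we may write $\rho\otimes_F E\iso\bigoplus_{i\in I}\rho_i^{m_i}$ with the $\rho_i=\rho_{V_i}$ pairwise non-isomorphic irreducible $E$-representations and $m_i\geq 1$. The group $\Gamma=\Gal(E,F)$ acts on $V\otimes_F E$ by $\sigma(v\otimes x)=v\otimes\sigma(x)$; this action is $F$-linear, only $\sigma$-semilinear over $E$, and commutes with the action of $G$. I would then observe that for any irreducible $EG$-submodule $W\subseteq V\otimes_F E$ the image $\sigma(W)$ is again an irreducible $EG$-submodule — it is stable under $E$ because $x\cdot\sigma(w)=\sigma(\sigma\inv(x)\cdot w)$, and under $G$ because $\sigma$ commutes with $G$ — and that $\sigma$ takes $EG$-isomorphic submodules to $EG$-isomorphic ones. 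Hence $\sigma$ induces a permutation $i\mapsto\sigma(i)$ of $I$ and carries the $\rho_i$-isotypic component of $V\otimes_F E$ bijectively onto the $\rho_{\sigma(i)}$-isotypic one; comparing $E$-dimensions along these $\sigma$-semilinear bijections (which preserve dimension) yields $m_i=m_{\sigma(i)}$, so the multiplicities are constant along $\Gamma$-orbits.

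The core step is to show that $\Gamma$ acts transitively on $\{\rho_i\}_{i\in I}$; granting this, all $m_i$ equal a common value $m$, the representation is $\bigoplus_{i\in I}\rho_i^m$, and the $V_i$ are $\Gamma$-conjugate, as required. For transitivity I would use Galois descent for vector spaces: every $\Gamma$-stable $E$-subspace $U\subseteq V\otimes_F E$ satisfies $U=U^\Gamma\otimes_F E$, so $\dim_F U^\Gamma=\dim_E U$, and in particular $(V\otimes_F E)^\Gamma=V$ since $E^\Gamma=F$. If the $\rho_i$ fell into two or more $\Gamma$-orbits, then the sum $U$ of the isotypic components indexed by one orbit would be, by the previous paragraph, a $\Gamma$-stable, $G$-stable, proper nonzero $E$-submodule of $V\otimes_F E$, whence $U^\Gamma$ would be a proper nonzero $G$-invariant $F$-subspace of $V$, contradicting irreducibility of $\rho$ over $F$. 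So there is a single orbit.

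Finally, $m\mid\abs G$: this is the arithmetically substantial point and the part I expect to be the real obstacle, since it does not come out of the formal Galois descent above but from the structure theory of the Schur index — roughly, $m$ divides the degree $\chi(1)$ of the associated irreducible character via the Wedderburn decomposition of the simple component of $FG$ attached to $\chi$, and $\chi(1)\mid\abs G$ is the classical theorem on character degrees. For a self-contained writeup I would simply cite \cite[Chapter 10]{isaacs2006character} for this divisibility; the remainder of the argument is elementary.
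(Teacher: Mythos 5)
The paper does not supply its own proof of this result: it is a ``loaned'' theorem, cited directly from Isaacs \cite[Theorem~9.21]{isaacs2006character}, and the paper only uses its statement. So the comparison can only be with the proof in that source, which is character-theoretic in flavour (working with the central idempotents $e_{\chi^\sigma}$ in $FG$ and with character fields $F(\chi)$). Your argument takes a genuinely different and more module-theoretic route: Maschke semisimplicity for the decomposition over $E$, the $\sigma$-semilinear action of $\Gal(E,F)$ permuting isomorphism classes of irreducible $EG$-submodules (hence permuting isotypic components and equalising multiplicities along orbits), and Galois descent for vector spaces to force transitivity of this permutation action from the irreducibility of $\rho$ over $F$. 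All of the individual steps you give are correct: $\sigma(W)$ is indeed an $EG$-submodule and $\sigma\phi\sigma^{-1}$ an $EG$-isomorphism, the $F$-linear bijection between isotypic components preserves $E$-dimension, and the descent argument $U=U^{\Gamma}\otimes_F E$ with $(V\otimes_F E)^{\Gamma}=V$ correctly turns a hypothetical second orbit into a proper nonzero $G$-invariant $F$-subspace. You also handle the choice of the $V_i$ correctly by picking Galois translates of one fixed irreducible. What your route does \emph{not} deliver — and you candidly flag this — is the divisibility $m\mid\abs G$, for which you rightly defer to the Schur-index machinery ($m_F(\chi)\mid\chi(1)$ and $\chi(1)\mid\abs G$, i.e.\ \cite[Chapter~10]{isaacs2006character}); that part does not come out of the descent formalism and is genuinely the hard content. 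Net effect: your approach gives a cleaner and more self-contained derivation of the decomposition, the common multiplicity, and the Galois-transitivity of the constituents, while both your proof and the cited source ultimately import the arithmetic of the Schur index for the divisibility claim.
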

	
	The latter means that for every $i \in I$ it holds that $\sigma(V_i) = V_j$ for some $j \in I$, and that moreover this action is transitive. However, finding irreducible representation can be done over a much smaller ring than $\CC$, by the following theorem. Here, for any finite group $G$, we define $\zeta_G$ as a primitive $\abs{G}$'th root of unity.
	
	\begin{loanthm}[{Brauer\cite[Theorem 10.3.]{isaacs2006character}}]\label{prop:iredoversubfield:theorem}
		Let $G$ be a finite group. Let $\rho:G\acts M$ be an irreducible representation on the free $\ZZG$-module $M$. Then $\rho\otimes\CC$, the representation on the space $M\otimes\CC$, is also irreducible.
	\end{loanthm}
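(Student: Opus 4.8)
The plan is to deduce this from the classical theorem, also due to Brauer, that $K := \QQ(\zeta_G)$ is a \emph{splitting field} for $G$: every irreducible $\CC$-representation of $G$ is isomorphic to $W\otimes_K\CC$ for some $K$-representation $W$. By the definition of irreducibility over an integral domain, the hypothesis says the $K$-representation $V := M\otimes_{\ZZG}K$ is irreducible, and we must show that $V\otimes_K\CC$ is irreducible (note $M\otimes\CC \cong V\otimes_K\CC$). Granting the splitting field property, I would pick an irreducible $\CC$-subrepresentation of $V\otimes_K\CC$ and write it as $W\otimes_K\CC$; then $W$ is itself irreducible over $K$, since a proper nonzero subrepresentation of $W$ would give one of $W\otimes_K\CC$. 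Base change of $\mathrm{Hom}$ along the flat extension $K\subset\CC$ yields $\mathrm{Hom}_{KG}(W,V)\otimes_K\CC\cong\mathrm{Hom}_{\CC G}(W\otimes_K\CC,V\otimes_K\CC)\neq 0$, so there is a nonzero $G$-morphism $W\to V$; as both are irreducible $K$-representations, its kernel is $0$ and its image is all of $V$, hence $V\cong W$ and $V\otimes_K\CC\cong W\otimes_K\CC$ is irreducible.

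It remains to establish that $K$ is a splitting field, and the essential input here is \emph{Brauer's induction theorem}: every $\CC$-character of $G$ is a $\ZZ$-linear combination $\sum_j a_j\,\mathrm{Ind}_{H_j}^G(\lambda_j)$ with each $H_j\leq G$ an elementary subgroup and each $\lambda_j$ a linear (one-dimensional) character of $H_j$. I would take this as a black box, as it is the one genuinely deep ingredient; its proof proceeds via an integrality and ring-theoretic analysis of the representation ring, reducing modulo the primes dividing $\abs G$. The use I make of it is elementary: the image of a linear character $\lambda_j$ of $H_j$ is a finite cyclic subgroup of $\CC^\times$ of order dividing $\abs{H_j}$, hence dividing $\abs G$, so $\lambda_j$ takes values in the group of $\abs G$-th roots of unity, which lies in $K$, and is therefore afforded by a one-dimensional $K$-representation. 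Induction preserves the ground field, so each $\mathrm{Ind}_{H_j}^G(\lambda_j)$ is afforded by a $K$-representation, and consequently every $\CC$-character of $G$ lies in the subgroup of the virtual character ring spanned by the characters $\psi_1,\dots,\psi_r$ of the irreducible $K$-representations.

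Now let $\chi$ be an arbitrary irreducible $\CC$-character, afforded by $\rho$. First, $\chi$ is $K$-valued: for $g\in G$ the eigenvalues of $\rho(g)$ are roots of unity of order dividing the order of $g$ since $\rho(g)$ has finite order, so $\chi(g)$ is a sum of $\abs G$-th roots of unity and lies in $K$. Second, I would apply Theorem \ref{thm:shur} to a finite Galois extension $E\supseteq K$ that is a splitting field for $G$ (such $E$ exists): each $\psi_i$ decomposes over $E$ as a sum of characters $m_i\sum_{l}\chi_{i,l}$, where $m_i\geq 1$ is the Schur index and the $\chi_{i,l}$ form a single $\Gal(E,K)$-orbit of pairwise distinct irreducible $E$-characters. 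Distinct $\psi_i$ then have disjoint sets of $\CC$-constituents (since $\langle\psi_i,\psi_j\rangle=0$ for $i\neq j$ by Schur's lemma), and together they account for all irreducible $\CC$-characters (each occurs in the regular representation of $K[G]$ after extending scalars to $\CC$), so $\chi=\chi_{i,1}$ for a unique index $i$, with $\langle\psi_i,\chi\rangle=m_i$ and $\langle\psi_j,\chi\rangle=0$ for $j\neq i$. Writing $\chi=\sum_j a_j\psi_j$ as in the previous paragraph and pairing with $\chi$ gives $1=\langle\chi,\chi\rangle=a_i m_i$ with $a_i\in\ZZ$ and $m_i\geq 1$, forcing $m_i=1$. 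Then $\psi_i$ decomposes over $E$ as $\sum_l\chi_{i,l}$, and since $\chi=\chi_{i,1}$ is $K$-valued it is fixed by every element of $\Gal(E,K)$, so its orbit is a single point and $\psi_i$ is absolutely irreducible with character $\chi$. Hence $\chi$ is afforded over $K$, and $K$ is a splitting field. The main obstacle is, as flagged, Brauer's induction theorem; the remainder is bookkeeping with characters, Schur's lemma, and Theorem \ref{thm:shur}.
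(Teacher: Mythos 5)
The paper does not prove this statement; it is quoted verbatim as Brauer's theorem, \cite[Theorem 10.3]{isaacs2006character}, so there is no internal proof to compare against. Your argument is correct and is essentially the standard proof found in that reference: reduce to showing that $K=\QQ(\zeta_G)$ is a splitting field, invoke Brauer's induction theorem so that every $\CC$-character is an integral combination of characters of $K$-representations, then run the Schur-index computation $1=\langle\chi,\chi\rangle=a_i m_i$ together with the Galois-invariance of the $K$-valued character $\chi$ to force $m_i=1$ and $\psi_i=\chi$, with Brauer's induction theorem correctly flagged as the one genuinely deep input.
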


	\subsection{Characters for representations}
	In this section, we introduce several results from representation theory using characters. Some of them are standard and will be provided without a proof, we refer to \cite{isaacs2006character} for more details.
	
	Given a representation $\rho$ of a group $G$ over a subfield $F \subset \CC$, then we can associate to this representation the character $\chi_\rho:G\rightarrow \CC$. This maps every group element $g\in G$, to the trace of its action $\chi_\rho(g)=\trace \left(\rho(g)\right)$. This character is invariant under conjugation, that is for $g,h\in G$, we have $\chi_\rho(g)=\chi_\rho(hgh\inv)$. This map is also linear in the sense that if $\rho_1$ and $\rho_2$ are representations, then $\chi_{\rho_1\oplus\rho_2}=\chi_{\rho_1}+\chi_{\rho_2}$ where $+$ denotes pointwise addition.

	On the space of characters, we define something resembling a Hermitian product.
	\begin{definition}
		Let $\chi_1$ and $\chi_2$ be characters over a subfield $F$ of the complex numbers, then we define $$(\chi_1,\chi_2)_G = \sum_{g\in G}\chi_1(g)\overline{\chi_2(g)}.$$\end{definition}
	\noindent Note that we do not divide by $\abs{G}$ as is usual in character theory. The map $(\cdot,\cdot)$ is sesquilinear and positive definite. Furthermore the irreducible representations of $G$ over $F$ form an orthogonal set. If $F$ is algebraically closed and $\rho$ is irreducible, then it holds that $(\chi_\rho,\chi_\rho)=\abs{G}$.
	
	To each character and representation, we now associate a map.
	\begin{definition}
		Let $F$ be a number field and $\chi$ a character over $F$. For every representation $\rho:G\acts M$ we define the map 	$\varpi_{\chi,M}:M\rightarrow M$ as 
		$$
\varpi_{\chi,M}(v)= \sum_{g\in G}\overline{\chi(g)}\rho(g)v
		$$
	\end{definition}
\noindent	Contrary to a slightly more common definition, we do not multiply these maps by $\frac{\chi(1)}{\abs{g}}$. Notice that this map is defined in terms of a module $M$, but we will sometimes omit the domain $M$ when no confusion is possible.
	
As the following lemmas will make clear, these maps are multiples of projections on irreducible subrepresentations.
	\begin{lemma}\label{prop:pigmap}
	For every $\chi$ and $G \acts M$, the map $\varpi_\chi: M \to M$ is a $G$-map.
	\end{lemma}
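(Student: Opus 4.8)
The plan is to verify the defining identity of a $G$-morphism directly from the formula for $\varpi_{\chi,M}$. Additivity and $R$-linearity of $\varpi_{\chi,M}$ are immediate, since each $\rho(g)$ is $R$-linear and the scalar $\overline{\chi(g)}$ commutes with multiplication by elements of $R$, so the only thing to check is that $\rho(h)\comp\varpi_{\chi,M}=\varpi_{\chi,M}\comp\rho(h)$ for every $h\in G$. Fix $h\in G$ and $v\in M$. Pulling $\rho(h)$ inside the defining sum gives
\[
\rho(h)\varpi_{\chi,M}(v)=\sum_{g\in G}\overline{\chi(g)}\,\rho(h)\rho(g)v=\sum_{g\in G}\overline{\chi(g)}\,\rho(hg)v,
\]
and the substitution $g'=hg$, which is a bijection of $G$, rewrites this as $\sum_{g'\in G}\overline{\chi(h\inv g')}\,\rho(g')v$. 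On the other hand, $\varpi_{\chi,M}(\rho(h)v)=\sum_{g\in G}\overline{\chi(g)}\,\rho(gh)v$, and the substitution $g'=gh$ rewrites it as $\sum_{g'\in G}\overline{\chi(g'h\inv)}\,\rho(g')v$.

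It then remains to compare the two reindexed sums termwise, and this is exactly where the hypothesis that $\chi$ is a character is used: characters are constant on conjugacy classes, and $h\inv g'=h\inv(g'h\inv)h$ is conjugate to $g'h\inv$, so $\chi(h\inv g')=\chi(g'h\inv)$ and hence $\overline{\chi(h\inv g')}=\overline{\chi(g'h\inv)}$. Thus the two sums coincide, giving $\rho(h)\varpi_{\chi,M}(v)=\varpi_{\chi,M}(\rho(h)v)$ for all $h$ and $v$, which is precisely the assertion that $\varpi_{\chi,M}$ is a $G$-map.

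I do not anticipate any genuine obstacle in this argument. The only points demanding a little care are that $g\mapsto hg$ and $g\mapsto gh$ are bijections of the finite group $G$, so the reindexed sums are genuinely unchanged, and that the conjugation-invariance of $\chi$ is the single ingredient that bridges the two expressions; in particular nothing about irreducibility of $\rho$ or the specific ring $R$ is needed.
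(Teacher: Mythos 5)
Your proof is correct and uses the same key ingredient as the paper's: conjugation-invariance of the character, applied after reindexing the sum. The only cosmetic difference is that the paper manipulates $\varpi_{\chi,M}(\rho(h)v)$ one-sidedly by factoring out $\rho(h)$ and substituting $g \mapsto h g h^{-1}$, whereas you reindex both sides and compare termwise; these are essentially the same argument.
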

	\begin{proof}
		Let $\rho:G\acts M$ be a representation.
		It is obvious that $\varpi_{\chi,M}$ is linear as a linear combination of linear maps. We compute the image of $\rho(h)v$ for some vector $v\in M$ and some group element $h\in G$.
		\begin{align*}
			\varpi_{\chi,M}(\rho(h)v)&=\sum_{g\in G}\overline{\chi(g)}\rho(g)\left(\rho(h)v\right)\\
			&= \rho(h)\sum_{g\in G}\overline{\chi(g)} \rho(h\inv g h) v \\
			&= \rho(h)\sum_{g\in G}\overline{\chi(g)} \rho(g)v\\
			&=\rho(h)\varpi_{\chi,M}(v)
		\end{align*}
	\end{proof}
	\begin{notation}
		If $\rho'$ is a representation with corresponding character $\chi_{\rho^\prime}$, then we also write $\varpi_{\rho'}$ for $\varpi_{\chi_{\rho'}}$.
	\end{notation}
	\begin{lemma}
		\label{prop:repindicator}
		Let $G$ be a group and $\rho_1:G\acts M_1, \rho_2:G\acts M_2$ two non-isomorphic irreducible $\CC$-representations. Then we have the following:
		$$
		\varpi_{\rho_1,M_2}=0
		$$
		and $$
		\varpi_{\rho_1,M_1}=c_{\rho_1}\id,
		$$
	 where $c_{\rho_1} = \frac{\abs{G}}{\dim(M_1)} \in \mathbb{Z}$, and hence divides $\abs{G}$.
	\end{lemma}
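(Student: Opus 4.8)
The plan is to exploit the fact that $\varpi_{\rho_1}$ is a $G$-map (Lemma \ref{prop:pigmap}) and then invoke Schur's lemma together with an orthogonality computation of characters.

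First I would treat the cross term $\varpi_{\rho_1,M_2}$. Since $\rho_1$ and $\rho_2$ are non-isomorphic irreducible $\CC$-representations, any $G$-morphism $M_2 \to M_2$ is a scalar (Schur over $\CC$), so $\varpi_{\rho_1,M_2} = \lambda \id$ for some $\lambda \in \CC$. Taking traces gives $\dim(M_2)\lambda = \trace(\varpi_{\rho_1,M_2}) = \sum_{g\in G}\overline{\chi_{\rho_1}(g)}\,\chi_{\rho_2}(g) = \overline{(\chi_{\rho_2},\chi_{\rho_1})_G}$. By orthogonality of irreducible characters over an algebraically closed field, this pairing vanishes because $\rho_1 \not\iso \rho_2$, forcing $\lambda = 0$ and hence $\varpi_{\rho_1,M_2}=0$.

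Next, for $\varpi_{\rho_1,M_1}$, again Schur gives $\varpi_{\rho_1,M_1} = c_{\rho_1}\id$ for some $c_{\rho_1}\in\CC$, and taking traces yields $\dim(M_1)\,c_{\rho_1} = \sum_{g\in G}\overline{\chi_{\rho_1}(g)}\chi_{\rho_1}(g) = (\chi_{\rho_1},\chi_{\rho_1})_G = \abs{G}$ by the normalization stated in the excerpt (no division by $\abs{G}$), using that $\rho_1$ is irreducible over the algebraically closed field $\CC$. Therefore $c_{\rho_1} = \frac{\abs{G}}{\dim(M_1)}$. It remains to argue this is an integer dividing $\abs{G}$: this follows from the classical fact that the dimension of an irreducible complex representation of a finite group divides the group order, so $\dim(M_1)\mid\abs{G}$ and $c_{\rho_1}\in\ZZ$ with $c_{\rho_1}\mid\abs{G}$.

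The only real subtlety — and the step I would be most careful about — is making sure all uses of Schur's lemma and character orthogonality are over $\CC$, which is legitimate precisely because the hypotheses say $\rho_1,\rho_2$ are irreducible $\CC$-representations; the non-standard normalization of $(\cdot,\cdot)_G$ must be tracked consistently so that $(\chi_{\rho_1},\chi_{\rho_1})_G = \abs{G}$ rather than $1$. Everything else is a routine trace computation. I would present the argument in the order: (1) Schur gives scalars in both cases; (2) compute traces; (3) apply orthogonality to kill the cross term and evaluate the diagonal term; (4) cite divisibility of $\dim(M_1)$ into $\abs{G}$ to conclude integrality.
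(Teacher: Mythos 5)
Your proposal is correct and follows essentially the same route as the paper: use Lemma \ref{prop:pigmap} to see $\varpi_{\rho_1,M_i}$ is a $G$-map, invoke Schur's lemma over $\CC$ to reduce to a scalar, compute the trace two ways to identify the scalar with the (unnormalized) character pairing, and then apply orthogonality and divisibility of $\dim(M_1)$ into $\abs{G}$. The only minor blemish is the stray complex conjugate on $(\chi_{\rho_2},\chi_{\rho_1})_G$, which is harmless since that pairing vanishes anyway.
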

	\begin{proof}
		By the previous lemma, $\varpi_{\rho_1,M_i}$ is a $G$-map from an irreducible representation over an algebraically closed field to itself, and thus corresponds to multiplication with some constant $c_i$. We thus have $\trace(\varpi_{\rho_1,M_i})=c_i\dim(M_i)$. 
		
		On the other hand, $\varpi_{\rho_1,M_i}$ is also a linear combination of linear maps, so the trace of $\varpi_{\rho_1,M_i}$ is that same linear combination of the traces of those maps. Hence the trace of $\varpi_{\rho_1,M_i}$ is equal to
		$$
		\sum_{g\in G}\overline{\chi_{\rho_1}(g)}\trace(\rho_i(g))
		$$
		which is nothing else then $(\chi_1,\chi_i)$. So if $i=2$ we get $c_2 = 0$ or equivalently that $\varpi_{\rho_1,M_2}$ is multiplication by $0$. On the other hand, if $i=1$ then $ (\chi_1,\chi_1) = \abs{G}$, and thus $c_{1} = \frac{\abs{G}}{\dim(M_i)}$. It is well-known that if $M_1$ is irreducible, the dimension of $M_1$ must divide $\abs{G}$ or thus $c_1$ is an integer.
	\end{proof}
	
	We generalize this lemma to work over $\QQ$ and $\ZZ$ as well.
	\begin{lemma}
		\label{prop:repindicatorZ}
		Let $G$ be a group and $\rho_1:G\acts M_1, \rho_2:G\acts M_2$ be two non-isomorphic irreducible representations over $\QQ$. Then we have the following:
		$$\varpi_{\chi_{\rho_1},M_2}=0
		$$
		and
		$$\varpi_{\chi_{\rho_1},M_1}=c_{\rho_1}\id_{M_1},
		$$
		where $c_{\rho_1} \in \ZZ$ divides $\abs{G}^2$
	\end{lemma}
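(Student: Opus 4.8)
\emph{Proof sketch.} The plan is to deduce the statement from its complex analogue, Lemma~\ref{prop:repindicator}, by extending scalars in two stages $\QQ \subset \QQ(\zeta_G) \subset \CC$. Write $E = \QQ(\zeta_G)$, a finite Galois extension of $\QQ$. Applying Theorem~\ref{thm:shur} to $\QQ \subset E$ and the $\QQ$-irreducible $\rho_1$ yields an integer $m_1$ with $m_1 \mid \abs{G}$ and an isomorphism $\rho_1 \otimes_\QQ E \cong \bigoplus_{i \in I_1}\sigma_i^{m_1}$, where the $\sigma_i: G \acts W_i$ are pairwise non-isomorphic irreducible $E$-representations that are $\Gal(E/\QQ)$-conjugate, and hence all share a common dimension $d$ over $E$. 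Since $E$ is a splitting field for $G$, Brauer's Theorem~\ref{prop:iredoversubfield:theorem} (applied to a $G$-invariant $\ZZ[\zeta_G]$-lattice inside each $W_i$) shows that each $\sigma_i \otimes_E \CC$ is irreducible over $\CC$; as in characteristic $0$ a representation is determined by its character and scalar extension does not change characters, the $\sigma_i \otimes_E \CC$ are moreover pairwise non-isomorphic. Thus $\rho_1 \otimes_\QQ \CC \cong \bigoplus_{i \in I_1}(\sigma_i \otimes_E \CC)^{m_1}$ is a decomposition into $\CC$-irreducibles of dimension $d$, and likewise $\rho_2 \otimes_\QQ \CC \cong \bigoplus_{j \in I_2}(\tau_j \otimes_E \CC)^{m_2}$.

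First I would check that $\rho_1 \otimes \CC$ and $\rho_2 \otimes \CC$ have no common irreducible constituent. Here I would invoke orthogonality twice: the $\QQ$-irreducible characters are orthogonal for $(\cdot,\cdot)_G$, so $(\chi_{\rho_1},\chi_{\rho_2})_G = 0$; on the other hand $\chi_{\rho_1} = m_1\sum_{i}\chi_{\sigma_i \otimes \CC}$ and $\chi_{\rho_2} = m_2\sum_{j}\chi_{\tau_j \otimes \CC}$, so expanding $(\chi_{\rho_1},\chi_{\rho_2})_G$ and using orthogonality of distinct $\CC$-irreducible characters forces $\sigma_i \otimes \CC \not\cong \tau_j \otimes \CC$ for all $i,j$.

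Next I would compute $\varpi_{\chi_{\rho_1},M}$ for $M = M_1$ and $M = M_2$ after extending scalars to $\CC$, using that $\chi \mapsto \varpi_{\chi,M}$ is additive, that $\varpi$ is compatible with the scalar extension $\QQ \to \CC$, and that $\varpi$ acts block-diagonally on direct sums. From $\chi_{\rho_1} = m_1\sum_i\chi_{\sigma_i \otimes \CC}$ we obtain $\varpi_{\chi_{\rho_1},M \otimes \CC} = m_1\sum_i\varpi_{\chi_{\sigma_i \otimes \CC},M \otimes \CC}$. On $M_2 \otimes \CC \cong \bigoplus_j(\tau_j \otimes \CC)^{m_2}$, Lemma~\ref{prop:repindicator} makes every block $\varpi_{\chi_{\sigma_i \otimes \CC},\tau_j \otimes \CC}$ vanish, so $\varpi_{\chi_{\rho_1},M_2 \otimes \CC} = 0$ and therefore $\varpi_{\chi_{\rho_1},M_2} = 0$. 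On $M_1 \otimes \CC \cong \bigoplus_i(\sigma_i \otimes \CC)^{m_1}$, the same lemma gives that $\varpi_{\chi_{\sigma_i \otimes \CC},M_1 \otimes \CC}$ equals $\tfrac{\abs{G}}{d}\id$ on the $(\sigma_i \otimes \CC)$-isotypic block and $0$ on the others; since this scalar $\tfrac{\abs{G}}{d}$ is the same for every $i$, summing over $i$ gives $\sum_i\varpi_{\chi_{\sigma_i \otimes \CC},M_1 \otimes \CC} = \tfrac{\abs{G}}{d}\id_{M_1 \otimes \CC}$, hence $\varpi_{\chi_{\rho_1},M_1 \otimes \CC} = m_1\tfrac{\abs{G}}{d}\id$. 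As $c_{\rho_1} := m_1\tfrac{\abs{G}}{d}$ is an integer, the $\QQ$-linear map $\varpi_{\chi_{\rho_1},M_1}$ must itself be $c_{\rho_1}\id_{M_1}$. Finally $m_1 \mid \abs{G}$ by Theorem~\ref{thm:shur} and $\tfrac{\abs{G}}{d} \mid \abs{G}$ since it is the integer $c_{\sigma_1 \otimes \CC}$ of Lemma~\ref{prop:repindicator}, so $c_{\rho_1} \mid \abs{G}^2$.

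I expect the main obstacle to be purely organisational: making the two-step scalar extension $\QQ \subset \QQ(\zeta_G) \subset \CC$ airtight — in particular verifying that $\QQ(\zeta_G)$ is a splitting field so that the $E$-irreducible constituents remain irreducible and pairwise non-isomorphic over $\CC$, and recording that their common dimension $d$ is exactly what forces the relevant scalars to coincide and the sum $\sum_i\varpi_{\chi_{\sigma_i \otimes \CC}}$ to be a single multiple of the identity. Everything else is a formal unwinding of the definitions of $\varpi$ and of characters, together with a direct appeal to Lemma~\ref{prop:repindicator}.
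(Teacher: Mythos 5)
Your proof is correct and follows the same strategy as the paper's: decompose $\rho_1 \otimes \CC$ into $\CC$-irreducible isotypic pieces via the Schur index, use additivity of $\chi \mapsto \varpi_\chi$, apply Lemma~\ref{prop:repindicator} componentwise, and observe that the scalar $\abs{G}/d$ is constant across the Galois orbit. You are somewhat more careful than the paper: you route the base change through the finite Galois extension $\QQ \subset \QQ(\zeta_G)$ before tensoring with $\CC$ (Theorem~\ref{thm:shur} as stated only covers finite Galois extensions), and you explicitly justify via orthogonality that $\rho_1 \otimes \CC$ and $\rho_2 \otimes \CC$ share no $\CC$-irreducible constituent, a fact the paper leaves implicit in ``A similar argument holds on $M_2$.''
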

	\begin{proof}
		Consider the complex representation $\rho_1\otimes\CC$ and its decomposition $$\rho_1\otimes\CC=\bigoplus \rho_{1,i}^m$$ into irreducible subrepresentations from Theorem \ref{thm:shur}. 
	Here we assume that $\rho_{1,i}$ and $\rho_{1,j}$ are non-isomorphic when $i$ and $j$ are distinct. Denote by $M_{1,i}$ a subrepresentation on which $\rho_{1,i}$ acts.
	The character of $\rho_1$ is then given by $m\sum\chi_{\rho_{1,i}}$.
	
	The map $\varpi_{\chi_{\rho_1},M_i}$ is linear in the sense that $\varpi_{\chi_1+\chi_2}=\varpi_{\chi_1}+\varpi_{\chi_2}$, hence $\varpi_{\chi_{\rho_1},M_i}$ can thus be rewritten as $m\sum\varpi_{\chi_{\rho_{1,i}}}$.
	If we consider this map on any of the representations $\rho_{1,j}$, we obtain $$
	\varpi_{\chi_{\rho_1,M_{1,j}}}=\sum m\varpi_{\chi_{\rho_{1,i},M_{1,j}}},
	$$
	which then by Lemma \ref{prop:repindicator} reduces to $\varpi_{\rho_1,M_1} = m\varpi_{\chi_{\rho_{1,j},M_{1,j}}}=mc_{\rho_{1,j}}\id_{M_{1,j}}$. As $c_{\rho_{1,j}} = \frac{\abs{G}}{\dim(\rho_{1,j})}$, it is independent of $j$, showing that the identity holds on all of $M_1$. A similar argument holds on $M_2$. As the integer $m$ must divide the order of the group, we get that $c_{\rho_1} = c_{\rho_{1,j}} m$ divides $\abs{G}^2$. 
	\end{proof}
	The main use of this lemma will be the fact that if you start with an integral representation, that the map $\varpi_{\chi_{\rho_1},M_1}$ will still be a map over the integers. The following remark shows that we can do even better.
	
	\begin{remark}\label{prop:repindicatorZstrong:remark}
		If we consider $\varpi_{\frac{\chi_{\rho_1}}{m}}$, with $m$ the Schur index of $\rho_1$, then we claim that $\frac{\chi_{\rho_1}}{m}=\sum \chi_{\rho_{1,i}}$ will still be integer-valued. Note that $\chi_{\rho_1}$ is the character of a representation on $\QQ$ and thus $\frac{\chi_{\rho_1}}{m}$ has values in $\QQ$. Furthermore, the eigenvalues of the maps $\rho_{1,i}$ are all roots of unity, and thus algebraic integers. We conclude that $\frac{\chi_{\rho_1}}{m}=\sum \chi_{\rho_{1,i}}$ is valued in the algebraic integers, leading to the claim.
	
	As a conclusion, if $\rho:G\acts M'$ is a finitely generated representation on $\ZZ$, then the maps $\varpi_{\frac{\chi_{\rho_1}}{m}}$  restricts to a map $M'\rightarrow M'$. Doing this we obtain a map that on subrepresentations isomorphic to $\rho_1$ acts like multiplication by some constant dividing $\abs{G}$.
	\end{remark}

	The maps $\varpi_\chi$ are well-behaved with respect to $G$-maps.
	\begin{lemma}\label{prop:indicommut}
		Let $\rho_{1}: G\acts M_1$ and $\rho_{2}:G\acts M_2$ be two representations. Take $G'$ a subgroup of $G$ and consider $\rho'_1,\rho'_2$ the representations $\rho_1\mid_{G'}$ and $\rho_2\mid_{G'}$ respectively. For any $G$-map $\varphi: M_1\rightarrow M_2$ and any character $\chi$ of $G^\prime$, it holds that $\varphi\comp\varpi_{\chi,M_1}=\varpi_{\chi,M_2}\comp\varphi$.
	\end{lemma}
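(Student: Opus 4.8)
The plan is to verify the identity $\varphi\comp\varpi_{\chi,M_1}=\varpi_{\chi,M_2}\comp\varphi$ by a direct computation, unwinding the definition of $\varpi_{\chi}$ and using only that $\varphi$ is a $G$-map, hence in particular a $G'$-map since $\chi$ is a character of the subgroup $G'$. Concretely, for any $v \in M_1$ I would compute
\[
\varphi\bigl(\varpi_{\chi,M_1}(v)\bigr) = \varphi\Bigl(\sum_{g\in G'}\overline{\chi(g)}\,\rho_1(g)v\Bigr) = \sum_{g\in G'}\overline{\chi(g)}\,\varphi\bigl(\rho_1(g)v\bigr),
\]
where the second equality is just linearity of $\varphi$ as a morphism of $R$-modules. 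Then I would invoke the $G$-map property in the form $\varphi\comp\rho_1(g) = \rho_2(g)\comp\varphi$ for each $g \in G$ (and in particular for $g\in G'$, where $\rho_1' = \rho_1\mid_{G'}$ and $\rho_2' = \rho_2\mid_{G'}$ act by the same automorphisms), giving
\[
\sum_{g\in G'}\overline{\chi(g)}\,\varphi\bigl(\rho_1(g)v\bigr) = \sum_{g\in G'}\overline{\chi(g)}\,\rho_2(g)\bigl(\varphi(v)\bigr) = \varpi_{\chi,M_2}\bigl(\varphi(v)\bigr).
\]
Chaining these equalities yields the claim.

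There is essentially no obstacle here: the statement is a formal consequence of linearity together with the intertwining relation built into the definition of a $G$-morphism, and the passage to the subgroup $G'$ is harmless because the character $\chi$ only ranges over $G'$ in the defining sum, while restriction of $\rho_i$ to $G'$ does not change the operators $\rho_i(g)$ for $g\in G'$. The only point worth stating carefully is that $\overline{\chi(g)}$ is a scalar in the base ring (or a subfield of $\CC$), so it commutes past the module morphism $\varphi$; this is what legitimizes pulling the coefficients out of $\varphi$ in the first displayed line. I would present the proof as a short three-line chain of equalities, mirroring the computation in the proof of Lemma~\ref{prop:pigmap}.
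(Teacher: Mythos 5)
Your computation is exactly the one in the paper: unwind $\varpi_{\chi,M_1}$, pull scalars and the sum past $\varphi$ by linearity, and use the intertwining relation $\varphi\comp\rho_1(g)=\rho_2(g)\comp\varphi$ for $g\in G'$. Correct, and same approach.
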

	\begin{proof}
		We apply both maps on some $v\in M_1$:
		\begin{align*}
			&\varphi\comp\varpi_{\chi,M_1}v\\
			=&\varphi\left(\sum_{g\in G'}\overline{\chi(g)}\rho'_1(g)v\right)\\
			=&\sum_{g\in G'}\overline{\chi(g)}\varphi(\rho'_1(g)v)\\
			=&\sum_{g\in G'}\overline{\chi(g)}\rho'_2(g)(\varphi(v))\\
			=&\varpi_{\chi,M_2}\comp\varphi(v).
		\end{align*}
	\end{proof}

	\section{Strategy for the Main Result}\label{sec:result}
	\numberwithin{lemma}{section}
The main result of this paper determines the conjugacy separability function for virtually abelian groups. 
	\begin{theorem}\label{prop:maintheorem:theorem}
		Let $H$ be a finitely generated virtually abelian group of Hirsch length $h$, then there exists some integer $0 \leq k \leq h$ such that $$
		\conj_H(n)\simeq \ln^k(n).
		$$
	\end{theorem}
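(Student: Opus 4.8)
The cleanest route is to deduce Theorem~\ref{prop:maintheorem:theorem} from the sharper statement announced above, Theorem~\ref{prop:conjugacyseparabilittysplitvirtuallyabelian:theorem}, which pins down the exponent $k$ explicitly in terms of the $\QQ$-irreducible subrepresentations of the conjugation action $\rho\colon G\acts\ZZ^h$. Once that theorem is available, Theorem~\ref{prop:maintheorem:theorem} is immediate: the exponent it produces is manifestly $\geq 0$, and the finite quotients used for its upper bound are never larger than a constant times $\ln^h(n)$, so $k\leq h$ automatically. Thus the real content is the proof of Theorem~\ref{prop:conjugacyseparabilittysplitvirtuallyabelian:theorem}, whose strategy I sketch below. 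Part of that proof is bookkeeping: the formula for $k$ is stated there in two a priori different shapes, one adapted to lower bounds and one to upper bounds, and a preliminary step is to check that they coincide; I will not dwell on that here.

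\emph{Reduction and the shape of conjugacy.} Since $H$ is finitely generated virtually abelian it contains $\ZZ^h$ with finite index; intersecting its finitely many $H$-conjugates yields a normal subgroup $\Lambda\normal H$ with $\Lambda\cong\ZZ^h$ and $G:=H/\Lambda$ finite, and conjugation gives $\rho\colon G\acts\Lambda$. For the precise value of $k$ one further reduces to a split extension $\ZZ^h\rtimes_\rho G$, which is the setting of Theorem~\ref{prop:conjugacyseparabilittysplitvirtuallyabelian:theorem}. Writing elements as pairs $(\lambda,\sigma)$, conjugacy splits into two layers: if the $G$-images of $a$ and $b$ are not conjugate in $G$ they are separated already by $H\to G$, a quotient of bounded size; and if, after conjugating $b$ by a lift of bounded norm, both have $G$-image $\sigma$, then $(\lambda,\sigma)\sim(\lambda',\sigma)$ exactly when $\lambda'=\rho(\tau)\lambda+(\id-\rho(\sigma))\mu$ for some $\tau$ commuting with $\sigma$ and some $\mu\in\ZZ^h$. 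Hence separating two non-conjugate such elements inside a finite quotient $(\ZZ^h/N)\rtimes G$, with $N$ a $\rho$-invariant finite-index subgroup, amounts to choosing $N$ so that $\rho(\tau)\lambda-\lambda'\notin(\id-\rho(\sigma))\ZZ^h+N$ for every $\tau$ commuting with $\sigma$ — finitely many linear conditions — and the quotient has size $\abs{G}\cdot[\ZZ^h:N]$. Everything comes down to how small $[\ZZ^h:N]$ can be taken.

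\emph{Lower bound.} Separating a nontrivial element from the identity is a special case of separating conjugacy classes, so $\RF_H\prec\conj_H$, and by the Der\'e--Matthys theorem $\RF_H\simeq\ln^{k_0}$ with $k_0$ governed by the $\CC$-irreducible constituents of $\rho$. To reach the possibly larger true exponent one exhibits, inside a $\QQ$-irreducible constituent $V$ of $\rho$, a sequence of elements of norm $\asymp n$ whose twisted-conjugacy class in $\ZZ^h$ is as spread out as the $\QQ$-structure permits: because conjugation permutes the Galois-conjugate $\CC$-pieces of $V$ transitively (Theorem~\ref{thm:shur}), no $\rho$-invariant $N$ can break $V$ into smaller conjugacy-respecting pieces, forcing $[\ZZ^h:N]\succeq\ln^{\dim V}$ to detect those elements. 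Taking $V$ of maximal admissible dimension gives the exponent $k\geq k_0$, and by Lemmas~\ref{prop:logarithmicLowerBound:lemma},~\ref{prop:comparegeometric},~\ref{prop:psubgeom} and~\ref{prop:pnondevider} it is enough to verify this along a sequence of primes in an arithmetic progression, equivalently at the values $\kgv\range n$.

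\emph{Upper bound, and the main obstacle.} Given non-conjugate $a=(\lambda,\sigma)$ and $b=(\lambda',\sigma)$ of norm $\leq n$, use Lemma~\ref{prop:pnondevider} to pick a prime $p=O(\ln n)$ with $p\nmid\abs{G}$ and $p$ avoiding an obstruction integer read off from $\lambda$ and $\lambda'$, so that the inequality $\rho(\tau)\lambda-\lambda'\notin(\id-\rho(\sigma))\ZZ^h+N$ survives modulo $p$. Since $p\nmid\abs{G}$ the module $(\ZZ/p\ZZ)^h$ is semisimple, and by Lemmas~\ref{prop:repindicator}--\ref{prop:repindicatorZ} the maps $\varpi_\chi$ of Section~\ref{sec:prereq} reduce modulo $p$ to invertible multiples of the projections onto the blocks of $(\ZZ/p\ZZ)^h$ attached to the $\QQ$-irreducible constituents of $\rho$; discarding every block on which the twisted-conjugacy classes of $\bar\lambda$ and $\bar\lambda'$ already coincide leaves a single block $W$, and then $N$ defined as the preimage of a $G$-invariant complement of $W$ is $\rho$-invariant with $[\ZZ^h:N]=p^{\dim W}\leq p^{h}$ and with $a\not\sim b$ in $(\ZZ^h/N)\rtimes G$. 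This gives $\conj_H(n)\leq\abs{G}\,p^{\dim W}\leq C\ln^{h}(n)$, and optimising the choice of $W$ over all such pairs produces exactly the exponent $k$ of Theorem~\ref{prop:conjugacyseparabilittysplitvirtuallyabelian:theorem}, in particular $0\leq k\leq h$. The hard part is this last step: the relevant inequality really takes place in the cokernel $\ZZ^h/(\id-\rho(\sigma))\ZZ^h$ and must be preserved for every $\tau$ commuting with $\sigma$ at once, so one has to understand how $(\id-\rho(\sigma))\ZZ^h$ sits with respect to the $\QQ$-blocks modulo $p$ and to show that it is precisely the $\QQ$-irreducible, not merely the $\CC$-irreducible, decomposition that cannot be refined without destroying conjugacy information — which is exactly the phenomenon making $\conj_H$ potentially larger than $\RF_H$.
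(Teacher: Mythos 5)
Your overall plan is the paper's: reduce to Theorem~\ref{prop:conjugacyseparabilittysplitvirtuallyabelian:theorem} via the embedding into a split extension (Proposition~\ref{prop:virtabEmbedsInSplitVirtab:lemma}), describe conjugacy as in Lemma~\ref{prop:conjugatesinrepresentation:lemma}, and prove matching lower and upper bounds with the exponent governed by the $\QQ$-irreducible decomposition. But there is a concrete error in your upper-bound sketch. You assert that ``discarding every block on which the twisted-conjugacy classes of $\bar\lambda$ and $\bar\lambda'$ already coincide leaves a single block $W$,'' and then you build $N$ as the preimage of a complement of that one block, getting $[\ZZ^h:N]=p^{\dim W}$ for a single $\QQ$-isotypic piece. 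This is false in general: the obstruction $\rho(\tau)\lambda-\lambda'\notin(\id-\rho(\sigma))\ZZ^h+N$ must hold for every $\tau\in C(\sigma)$ simultaneously, and it can happen that for $\tau_1$ the difference is detected only in block $1$ and for $\tau_2$ only in block $2$, so that neither block alone suffices. That is precisely what the paper's example after Remark~\ref{prop:naiveupperbound:remark} realises with $G=\tfrac{\ZZ}{2\ZZ}\times\tfrac{\ZZ}{2\ZZ}$ acting on $\ZZ^3$: all three one-dimensional blocks are needed, forcing $\conj_H\simeq\ln^3(n)$ even though $\RF_H\simeq\ln(n)$. Your ``single block'' step would yield only $\ln(n)$, contradicting the paper's own theorem. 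The paper handles this by working with a \emph{set} $K\subset\{1,\dots,l\}$ of blocks, defining when $K$ $g$-admits a tuple, and taking $\dim K=\sum_{i\in K}d_i$; the correct exponent is a min over admitting sets and a max over pairs, not the dimension of a single block.

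A second, subtler gap is that your description of conjugacy assumes $H=\ZZ^h\rtimes_\rho G$ itself, whereas Proposition~\ref{prop:virtabEmbedsInSplitVirtab:lemma} only embeds $H$ as a subgroup of $M\rtimes_\rho G$ extending $\abs{G}M$ by $G$, so conjugators $(-\tilde v,h)$ are constrained to $H$ and the criterion in Lemma~\ref{prop:conjugatesinrepresentation:lemma} involves $\abs{G}W_g(M)$ and the correction terms $v_h-\rho(g)v_h$, not the full $W_g(M)$. These are exactly the terms that the paper's notions of ``strongly unsolvable'' versus ``locally/globally unsolvable'' are engineered to control (via the constant $m_H$ of Notation~\ref{not:globalconstant}), and your sketch does not address this, nor the nontrivial comparison $\mf k_2\leq\mf k_3$ proved in Section~\ref{sec:2->3}. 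The lower-bound outline is closer to the paper's strategy, but it likewise needs the admitting-set machinery to identify the correct set $K$ from the kernel $N$ of a separating quotient and show that $\dim K\geq\mf k_3$.
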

	The Hirsch length of $H$ is the rank of a finite index torsion-free abelian subgroup. There will be two explicit formulas for the integer $k$, and most of the remainder of this section is devoted to describing these. As we will show, each formula has its own advantage for either the lower or upper bound, and by comparing both constants we find that they are in fact equal.

	First of all, not every virtually abelian group is a semi-direct product of a torsion-free abelian group with a finite one. The following result using group cohomology however shows that these groups can be embedded into a semi-direct product of a very specific form.
	\begin{proposition}\label{prop:virtabEmbedsInSplitVirtab:lemma}
		Let $H$ be a finitely generated virtually abelian group.
		Then there exists some representation $\rho:G\acts M$ on a torsion-free abelian group $M$ such that $H$ is a subgroup of $M\rtimes_\rho G$, extending $\abs{G}M$ by $G$.	\end{proposition}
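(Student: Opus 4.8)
The plan is to construct the representation $\rho:G\acts M$ directly from the virtually abelian group $H$ using the theory of group extensions and group cohomology. The starting point is that $H$ is finitely generated and virtually abelian, so it contains a finite-index subgroup that is free abelian; after passing to a further finite-index subgroup we may assume this subgroup $A\cong\ZZ^h$ is normal in $H$ (take the core of the finite-index abelian subgroup, which is still finite-index and abelian). Set $G = H/A$, which is finite, and we obtain a short exact sequence $1 \to A \to H \to G \to 1$. Conjugation in $H$ gives a well-defined action of $G$ on $A$, i.e. a representation $\rho_0 : G \acts A \cong \ZZ^h$.

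The extension $1 \to A \to H \to G \to 1$ need not split, but it is classified by a cohomology class $[\omega] \in H^2(G; A)$ (with the module structure given by $\rho_0$). The key tool is that $H^2(G;A)$ is annihilated by $\abs{G}$, since $G$ is finite: the standard transfer/corestriction argument shows $\abs{G}\cdot H^n(G;A) = 0$ for all $n\geq 1$. Concretely, $\abs{G}[\omega] = 0$. We want to use this to embed $H$ into a split extension. The natural move is to consider the module $M := \frac{1}{\abs{G}}A \cong \ZZ^h$ containing $A$ as the subgroup $\abs{G}M$, with $\rho$ the obvious extension of $\rho_0$ to $M$ (well-defined since $\rho_0$ preserves $A$ and $M$ is canonically $A\otimes_\ZZ \frac{1}{\abs{G}}\ZZ$ inside $A\otimes\QQ$). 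Then $H^2(G;A) \to H^2(G;M)$ sends $[\omega]$ to the image of a class killed by $\abs{G}$; and because multiplication by $\abs{G}$ is an isomorphism $A \to \abs{G}M = A$ but becomes divisible in $M$, the image of $[\omega]$ in $H^2(G;M)$ is zero. Hence the pushed-out extension $1 \to M \to E \to G \to 1$ splits, $E \cong M\rtimes_\rho G$.

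**The next step** is to check that $H$ genuinely embeds into this split extension $M\rtimes_\rho G$. The pushout construction gives a commutative diagram
\begin{equation*}
\begin{tikzcd}
1 \arrow[r] & A \arrow[r]\arrow[d, hook] & H \arrow[r]\arrow[d] & G \arrow[r]\arrow[d, equal] & 1\\
1 \arrow[r] & M \arrow[r] & E \arrow[r] & G \arrow[r] & 1
\end{tikzcd}
\end{equation*}
and the map $H \to E$ is injective because its restriction to $A$ is the inclusion $A \hookrightarrow M$ (injective) and it induces the identity on the quotients $G$; a diagram chase (five-lemma style) shows the middle map is injective. Finally, since $H$ contains $A = \abs{G}M$ with quotient $G$, the image of $H$ in $E \cong M\rtimes_\rho G$ contains $\abs{G}M$ and surjects onto $G$, so indeed $H$ extends $\abs{G}M$ by $G$ as claimed. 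This completes the proof.

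**The main obstacle** I anticipate is the bookkeeping around the pushout of cohomology classes and verifying that $[\omega]$ dies in $H^2(G;M)$ — one must be careful that the relevant map $H^2(G;A)\to H^2(G;M)$ is induced by the inclusion $A\hookrightarrow M$ (multiplication by $\abs{G}$ composed with an isomorphism), and that the class being $\abs{G}$-torsion is exactly what forces its image to vanish after this inclusion, rather than merely its image under multiplication by $\abs{G}$ on $A$ itself. An equivalent, perhaps cleaner, route avoiding $H^2$ entirely: pick a set-theoretic section $s: G \to H$ with cocycle $f(g_1,g_2) = s(g_1)s(g_2)s(g_1g_2)\inv \in A$; inside $M\rtimes_\rho G$ one explicitly writes down a new section $\tilde s(g) = (t(g), g)$ and solves the cobounding equation $f(g_1,g_2) = t(g_1) + \rho(g_1)t(g_2) - t(g_1g_2)$ in $M$, which is possible because averaging $t(g) = -\frac{1}{\abs{G}}\sum_{h\in G} f(g,h)$ (interpreted in $M$, using $\abs{G}M \supseteq A \ni f$) provides an explicit solution — this is the concrete incarnation of the same vanishing statement and may be the more transparent way to present the argument.
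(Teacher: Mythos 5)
Your proof is correct and follows essentially the same route as the paper: pass to a normal free abelian finite-index subgroup $A$, enlarge to $M=\tfrac{1}{\abs{G}}A$, and use that $\abs{G}$ annihilates $H^2(G;A)$ to see the pushed-out extension by $M$ splits, then verify that $H$ embeds in $M\rtimes_\rho G$ with $H\cap M = \abs{G}M$. The paper phrases the vanishing step via the factor-set/isomorphism $M'\cong M$ sending $[\_,\_]$ to $\abs{G}[\_,\_]$, while you phrase it via the pushout map $H^2(G;A)\to H^2(G;M)$ (and sketch the explicit averaging cobounding formula), but these are the same argument in different guises.
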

	The last statement of the lemma means that $H\cap M=\abs{G}M$ and that the projection of $H$ to $G$ is surjective.
	\begin{proof}	
		As $H$ is finitely generated virtually abelian, it fits in a short exact sequence$$
		\begin{tikzcd}
			0\ar[r]&M'\ar[r]&H\ar[r]&G\ar[r]&1
		\end{tikzcd}
		$$
		where $M'$ is free abelian and $G$ is finite. Write $M = \frac{M'}{\abs{G}} \subset M^\prime \otimes \QQ$ for the free abelian group with elements of the form $\set{\frac{m}{\abs{G}}\in M'\otimes\QQ \Big\vert m\in M'}$ and $\rho$ for the linear extension of the representation of $G$ on $M'$ induced by the extension $H$. We will show the existence of an injective morphism $\varphi$ such that the following commutes.
				\[\begin{tikzcd}
			0 & M' & H & G & 1 \\
			0 & {M} & {M\rtimes_{\rho} G} & G & 1
			\arrow[from=1-1, to=1-2]
			\arrow["\iota", from=1-2, to=1-3]
			\arrow["\pi", from=1-3, to=1-4]
			\arrow[from=1-4, to=1-5]
			\arrow[from=2-1, to=2-2]
			\arrow[from=2-2, to=2-3]
			\arrow[from=2-3, to=2-4]
			\arrow[from=2-4, to=2-5]
			\arrow[from=1-2, to=2-2]
			\arrow["\varphi", from=1-3, to=2-3]
			\arrow["{=}", from=1-4, to=2-4]
		\end{tikzcd}.\]
		
		By \cite[Lemma 6.6.5]{weibel1994introduction}, the extension 
		\[\begin{tikzcd}
			0 & M' & H & G & 1
			\arrow[from=1-1, to=1-2]
			\arrow[from=1-2, to=1-3]
			\arrow[from=1-3, to=1-4]
			\arrow[from=1-4, to=1-5]
		\end{tikzcd}\]
		is completely determined by a certain representation $\rho: G \acts M^\prime$ and by one of its factor sets, namely a map $$[\_,\_]:G\times G\rightarrow M':(g,g')\mapsto [g,g']=\sigma(g)\sigma(g')\sigma(gg')\inv$$ where $\sigma: G \to H$ is some map on sets such that $\pi \circ \sigma = \id_G$.
In this case, the group is isomorphic to the set $M'\times G$ where multiplication is given by:$$
(m,g)*(m',g')=(m+\rho(g)m'+[g,g'],gg').
$$

	 Similarly, by applying the embedding of $M'$ into $M$, we also obtain a factor set on $M$. One easily checks that the extended representation and factor set define a new group $H'$, and that $H$ embeds naturally into this group. It thus remains to be shown that the group $H'$ is a semi-direct product. Through multiplication by $\abs{G}$ we obtain an isomorphism between $M'$ and $M=\frac{M'}{\abs{G}}$. This isomorphism preserves the representation $\rho$, but it maps the extension of $[\_,\_]$ to the factor set $\abs{G}[\_,\_]$.
	 
	 By \cite[Theorem 6.6.7]{weibel1994introduction}, these factor sets are precisely the normalized 2-cocycles. Also by \cite[Theorem 6.5.8]{weibel1994introduction}, $H^2(G,A)$ is annihilated by $\abs{G}$. It follows that $\abs{G}[\_,\_]$ is not only a normalized $2$-cocycle, but also a coboundary. It thus follows by \cite[Theorem 6.6.3]{weibel1994introduction}, that it is up to a change of section, equivalent to the trivial factor set and thus that $H'$ is a semi-direct product, completing the proof.
	\end{proof}
	\noindent The previous proposition shows that we may restrict ourselves to subgroups $H \le M\rtimes_\rho G$ of semi-direct products that extend $\abs{G}M$ by $G$ where $\rho: G \acts M$ is some representation of a finite group onto a torsion-free finitely generated abelian group. For a discussion about the uniqueness of these representation and the relation to the holonomy representation for crystallographic groups, we refer to \cite[Section 3]{dere2023residual}. 
	
\begin{convention}
From now on, unless stated otherwise, let $\rho:G\acts M$ be a representation as above of a finite group on a free $\ZZ$-module. The subgroup $H <M\rtimes_\rho G$ is always an extension of $\abs{G}M$ by $G$. We fix for every $h\in G$ an element $v_h\in M$ such that $(v_h,h)\in H$. 
\end{convention}
\noindent We will study conjugation in $H$ using the following two submodules. 
	
	\begin{definition}
		Let $\rho:G\acts M$ be a representation of a finite group on a free $\ZZ$-module. For every $g\in G$, we define $W_g(M) = \Ima (\id_M-\rho(g)) \subset M$ and $V_g(M)=\sqrt[M]{W_g(M)}$.
	\end{definition}

	\begin{lemma}\label{prop:conjugatesinrepresentation:lemma}
	With notations as above, the elements $(v_1,g),(v_2,g)\in H$ are conjugate if and only if there exists some $h\in C(g)$ such that $v_1-\rho(h)v_2-(v_h-\rho(g)v_h) \in \abs{G} W_g(M)$.
		
	\end{lemma}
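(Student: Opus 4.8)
The plan is a direct computation in the semidirect product $M\rtimes_\rho G$, translating the conjugacy equation into a congruence in $M$.

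First I would record the shape of the elements of $H$. Since $H$ is an extension of $\abs{G}M$ by $G$, the projection $H\to G$ is surjective, so for every $h\in G$ the fibre over $h$ is nonempty; using the fixed element $(v_h,h)\in H$ together with $H\cap M=\abs{G}M$, this fibre consists exactly of the elements $(v_h+\abs{G}m,h)$ with $m\in M$. Indeed, $(w,h)(v_h,h)\inv=(w-v_h,1)$ lies in $H\cap M=\abs{G}M$ if and only if $w\in v_h+\abs{G}M$.

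Next I would expand the conjugacy condition. By definition $(v_1,g)\sim(v_2,g)$ if and only if there is some $(w,h)\in H$ with $(w,h)(v_2,g)(w,h)\inv=(v_1,g)$. Using $(m,g)\inv=(-\rho(g\inv)m,g\inv)$ and the multiplication rule, a short computation gives
$$(w,h)(v_2,g)(w,h)\inv=\bigl(\rho(h)v_2+(\id-\rho(hgh\inv))w,\ hgh\inv\bigr).$$
Comparing the $G$-components forces $hgh\inv=g$, i.e.\ $h$ lies in the centralizer $C(g)$ of $g$ in $G$; comparing the $M$-components then yields $v_1-\rho(h)v_2=(\id-\rho(g))w$.

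Finally I would substitute $w=v_h+\abs{G}m$ from the first step, so that $(\id-\rho(g))w=(v_h-\rho(g)v_h)+\abs{G}(\id-\rho(g))m$, and the condition becomes
$$v_1-\rho(h)v_2-(v_h-\rho(g)v_h)=\abs{G}(\id-\rho(g))m.$$
As $m$ ranges over $M$, the right-hand side ranges over precisely $\abs{G}W_g(M)$, since $W_g(M)=\Ima(\id_M-\rho(g))$. Each of these steps is an equivalence, so running the argument in both directions proves the claim. The only point that needs a little care — rather than a genuine obstacle — is the description of the fibres of $H\to G$, which is exactly where both defining properties of $H$ are used; note that the radical and $V_g(M)$ play no role in this lemma, only $W_g(M)$.
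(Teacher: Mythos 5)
Your proof is correct and follows essentially the same route as the paper's: a direct computation of $(w,h)(v_2,g)(w,h)^{-1}$ in the semidirect product, comparison of $G$-components to force $h\in C(g)$, then substitution of the fibre description $w\in v_h+\abs{G}M$ to extract the congruence modulo $\abs{G}W_g(M)$. The only (cosmetic) difference is that you phrase the conjugator as $(w,h)$ and spell out the fibre characterization up front, while the paper names it $(-\tilde v,h)$ and records the membership condition at the end; the content is identical.
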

	\begin{proof}
	The elements $(v_1,g), (v_2,g) \in H$ are conjugate if there exists some element $(-\tilde v,h)\in H$ such that$$
		(v_1,g)=(-\tilde v,h)(v_2,g)(\tilde v,h)^{-1}.
		$$
A short computations shows that this is equivalent to \begin{align*}
			(v_1+\tilde v-\rho(g)\tilde v,g)&=(\rho(h)v_2,hgh\inv).
		\end{align*}
			For these to be equal, we first need that $hgh\inv=g$ or thus that $h\in C(g)$, and secondly that $v_1+\tilde v-\rho(g)\tilde v=\rho(h)v_2$. Note that by construction every element in $H$ is of the form $(v,0)(v_h,h)$ for some $v \in \abs{G} M$, and that any element of this form lies in $H$. In particular, $(-\tilde v,h) \in H$ if and only if $\tilde{v} + v_h \in \abs{G}M$, leading to the claimed result.
	\end{proof}
	
	If $H$ is a group as above, and $N \subset \abs{G} M$ a subrepresentation, then $N$ always forms a normal subgroup in $H$, and we can take the quotient $H/N$. These are the finite quotients we will consider to study conjugacy separability.
		
	\begin{corollary}\label{prop:conjugatesinrepresentationquotient:corollary}
	Let $N$ be a subrepresentation of $\abs{G}M$, then $(v_1,g),(v_2,g)\in H/N$ are conjugate if and only if there exists some $\tilde v$ in $\abs{G}M$, and some $h\in C(g)$ such that $v_1-\rho(h)v_2 - (v_h-\rho(g)v_h)\in N + \abs{G} W_g(M)$.
	\end{corollary}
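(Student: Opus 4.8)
The plan is to deduce this directly from Lemma \ref{prop:conjugatesinrepresentation:lemma} applied inside $H$, using that $N$ is a $\rho$-invariant subgroup sitting inside the first coordinate. First I would unwind the definition of conjugacy in a quotient: since $N$ is normal in $H$, $(v_1,g)N$ and $(v_2,g)N$ are conjugate in $H/N$ if and only if there is $z\in H$ with $z(v_2,g)z^{-1}\in (v_1,g)N$. Because $\abs{G}M\subset H$ we have $(n,1)\in H$ for every $n\in N$, and since $\rho(g)N=N$ the coset $(v_1,g)N$ equals $\{(v_1+n,g)\mid n\in N\}$. Hence $(v_1,g)N\sim (v_2,g)N$ in $H/N$ if and only if there exists $n\in N$ such that $(v_1+n,g)\sim (v_2,g)$ in $H$.

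Next I would feed $(v_1+n,g)$ into Lemma \ref{prop:conjugatesinrepresentation:lemma}: this element is conjugate to $(v_2,g)$ in $H$ precisely when there is $h\in C(g)$ with $(v_1+n)-\rho(h)v_2-(v_h-\rho(g)v_h)\in\abs{G}W_g(M)$. Since $N$ is a subgroup, letting $n$ range over all of $N$ shows this is equivalent to $v_1-\rho(h)v_2-(v_h-\rho(g)v_h)\in N+\abs{G}W_g(M)$ for some $h\in C(g)$. Finally, using $W_g(M)=\Ima(\id_M-\rho(g))$ and $\abs{G}M=\{\abs{G}m\mid m\in M\}$, one has $\abs{G}W_g(M)=(\id_M-\rho(g))(\abs{G}M)=\{\tilde v-\rho(g)\tilde v\mid \tilde v\in\abs{G}M\}$, which rewrites the membership condition in exactly the stated form with the auxiliary element $\tilde v\in\abs{G}M$ (an alternative, equally short route is to re-run the short computation in the proof of Lemma \ref{prop:conjugatesinrepresentation:lemma} verbatim, simply replacing each equality of first coordinates by an equality modulo $N$).

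Since every step is a direct translation, there is no genuine obstacle; the only points needing a word of care are that passing to $H/N$ relaxes the equality of first coordinates by exactly an element of $N$ while leaving the constraint $h\in C(g)$ untouched (because $N$ lies in the kernel $M$ of the projection $H\to G$), and that $N$ being $\rho$-invariant is what allows the shift $n$ to be moved freely across $\rho(h)$ and absorbed into $N+\abs{G}W_g(M)$.
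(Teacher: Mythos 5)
Your proof is correct and is essentially a fleshed-out version of the paper's one-line argument. The paper says to apply Lemma~\ref{prop:conjugatesinrepresentation:lemma} to the quotient module $M/N$ (viewing $H/N$ inside $(M/N)\rtimes G$, using that $N\subset\abs{G}M$ is $\rho$-invariant so the passage is licit), whereas you stay in $H$ and observe that conjugacy of the cosets is equivalent to conjugacy in $H$ of $(v_1+n,g)$ and $(v_2,g)$ for some $n\in N$, then apply the lemma in $H$ and let $n$ range over $N$; these are the same computation packaged slightly differently, and both correctly yield the stated membership in $N+\abs{G}W_g(M)$. You also correctly read the (somewhat oddly phrased) role of $\tilde v\in\abs{G}M$ in the statement as a parametrization of $\abs{G}W_g(M)=\{\tilde v-\rho(g)\tilde v\mid \tilde v\in\abs{G}M\}$.
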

	\begin{proof}
		This follows by applying Lemma \ref{prop:conjugatesinrepresentation:lemma} on $M/N$.
	\end{proof}
	
In the following definition, we look for pairs $(v_1,v_2)$ for which the conjugacy classes can be separated in smalls quotients of the group $H$, namely by the normal subgroup $\abs{G}^3 M$.
	\begin{definition}
	With notations as above, take a $g \in G, v_1,v_2\in M$ and $h\in C(g)$. Then we call $(v_1,v_2)$ \textbf{strongly $g$-unsolvable} in $h$ if $v_1-\rho(h)v_2-v_h+\rho(g)v_h\notin \abs{G}W_g(M)+\abs{G}^3M$. If no confusion is possible, we just say that $(v_1,v_2)$ is \textbf{strongly unsolvable}.
	\end{definition}
		Notice that the above definition does not depend on the choice of $v_h$, indeed replacing $v_h$ with $v_h'$, adds a factor $(v_h-v_h')-\rho(g)(v_h-v_h')$ which is an element of $\abs{G}W_g(M)$. Furthermore notice that $\rho(g)v_h-v_h$ itself belongs to $V_g(M)$.

	\begin{example}
		Let $G=\frac{\ZZ}{3\ZZ}\oplus\frac{\ZZ}{2\ZZ}\oplus\frac{\ZZ}{2\ZZ}$ with generators $g_1, g_2$ and $g_3$ of the three components, and let $M=\ZZ^{3\times2}$. An element $m$ of $M$ is denoted by a $3$ by $2$ matrix$$
		\begin{pmatrix}
			m_1&m_2\\m_3&m_4\\m_5&m_6
		\end{pmatrix}
		$$
		The action of $G$ on $M$ is such that$$
		\rho(g_1)m=\begin{pmatrix}
			m_5&m_6\\m_1&m_2\\m_3&m_4
		\end{pmatrix},
\\
		\rho(g_2)m=\begin{pmatrix}
			m_2&m_1\\m_4&m_3\\m_6&m_5
		\end{pmatrix},
	\\
\rho(g_3)m=-m.$$
		
		Take $g=g_1g_3\in G$, then as $G$ is abelian, we know that $G=Z(g)$.
		Applying the map $\id_M-\rho(g)$ on the basis of $M$, we obtain that $W_g(M)$ is generated by$$
		\set{\begin{pmatrix}1&0\\1&0\\0&0\end{pmatrix},\begin{pmatrix}0&0\\1&0\\1&0\end{pmatrix},\begin{pmatrix}1&0\\0&0\\1&0\end{pmatrix},\begin{pmatrix}0&1\\0&1\\0&0\end{pmatrix},\begin{pmatrix}0&0\\0&1\\0&1\end{pmatrix},\begin{pmatrix}0&1\\0&0\\0&1\end{pmatrix}}
		$$
	A short computation shows that this is a subspace of index $4$, and hence that $V_g(M)=M$.
		
		Let $H$ be the subgroup of $M\rtimes G$ generated by $\abs{G}M\rtimes1$ and $0\rtimes G$. In this case we can choose $v_h=0$ for any choice of $h\in G$.
		Consider then $v_1$ and $v_2$ as follows:$$
		v_1=\begin{pmatrix}1&0\\0&0\\0&0\end{pmatrix}
		$$
		and$$
		v_2\begin{pmatrix}0&1\\0&0\\0&0\end{pmatrix}.
		$$
		Then $(v_1,v_2)$ is $g$-strongly unsolvable in $h=e$.
		Indeed $v_1-\rho(h)v_2-(v_h-\rho(g)v_h)=v_1-v_2$ which is given by $$
		\begin{pmatrix}1&-1\\0&0\\0&0\end{pmatrix}
		$$
		and this is not an element of $W_g(M)+12^3M$. On the other hand $(v_1,v_2)$ is not $g$-strongly unsolvable in $h = g_2g_3$ as $v_1-\rho(h)v_2-(v_h-\rho(g)v_h)$ in this case is given by the matrix$$
		\begin{pmatrix}2&0\\0&0\\0&0\end{pmatrix}
		$$
		which is an element of $W_g(M)$.
		
	\end{example}

To study quotients to separate elements that are not strongly unsolvable, we will look at the $\QQ$-irreducible representations of our representation $\rho:G\acts M$.  The representation $M\otimes \QQ$ splits into subrepresentations $\tilde N_1\oplus\tilde N_2\oplus\cdots\oplus\tilde N_l$, where 
we assume that all irreducible subrepresentations of $\tilde N_i$ are isomorphic and if $\tilde N_i$ and $\tilde N_j$ have an isomorphic irreducible subrepresentation, then the indices must be equal, i.e.~$i=j$. It is easy to see that this decomposition is unique.
	\begin{definition}
		Let $\rho:G\acts M$ be a representation on a free $\ZZ$ module and let $\tilde N_1\oplus\tilde N_2\oplus\cdots\oplus\tilde N_l$ be the decomposition as above.
		Define $\pi_i:M\rightarrow\tilde N_i$ to be the natural projection maps and define $M_i=\pi_i(M)\subset \tilde N_i$.
		We call $M_1\oplus M_2\oplus\cdots \oplus M_l$ the \textbf{square hull} of $\rho$. When no confusion is possible, we also say that $M_1\oplus M_2\oplus\cdots \oplus M_l$ is the square hull of $M$.
	\end{definition}

	\begin{example}
		Let $G=\frac{\ZZ}{2\ZZ}=\set{\overline0,\overline1}$ and let $M=\ZZ^2$. Let $\rho:G\acts M$ be such that $$\rho(\overline1)=\begin{pmatrix}0&1\\1&0\end{pmatrix}.$$
		As a $\QQ$ vector space, the irreducible subrepresentations of $M\otimes\QQ$ are spanned by $(1,1)$ and $(1,-1)$. The square hull is here thus given $\left\langle\left(\frac{1}{2},\frac{1}{2}\right)\right\rangle\oplus\left\langle\left(\frac{1}{2},-\frac{1}{2}\right)\right\rangle$.
	\end{example}

	The square hull $M_1\oplus M_2\oplus\cdots \oplus M_l$ contains $M$ as a subgroup and in a lot of cases, this inclusion is strict. However $M$ can not be a lot smaller then $M_1\oplus M_2\oplus\cdots \oplus M_l$.

	\begin{lemma}\label{prop:squareHullAlmostIncluded:lemma}
		Let $M'=M_1\oplus\cdots\oplus M_l$ be the square hull of $M$, then ${\abs{G}M'\subset M\subset M'}$
	\end{lemma}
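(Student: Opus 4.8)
The plan is to establish the two inclusions separately. The right-hand inclusion $M \subset M'$ is essentially formal: each projection $\pi_i : M \otimes \QQ \to \tilde N_i$ restricts to a map $\pi_i : M \to \tilde N_i$ whose image is by definition $M_i$, and since the $\tilde N_i$ give an internal direct sum decomposition of $M \otimes \QQ$, any $m \in M$ equals $\sum_i \pi_i(m) \in \bigoplus_i M_i = M'$. So the only real content is the left-hand inclusion $\abs{G} M' \subset M$.

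For $\abs{G} M' \subset M$, the natural tool is the family of maps $\varpi_\chi$ from Section 2.3, or rather a rescaled version of them. The idea is that, up to a bounded factor, the projection $\pi_i$ onto the isotypic component $\tilde N_i$ can be realized by an averaging operator that preserves the integral lattice $M$. Concretely, fix $i$ and let $\rho_i$ be an irreducible $\QQ$-subrepresentation of $\tilde N_i$ with Schur index $m_i$. By Remark \ref{prop:repindicatorZstrong:remark}, the map $\varpi_{\chi_{\rho_i}/m_i}$ restricts to an endomorphism of $M$, and on a $\QQ$-irreducible summand isomorphic to $\rho_i$ it acts as multiplication by a constant $c_i$ dividing $\abs{G}$, while on summands in the other $\tilde N_j$ ($j \neq i$) it acts as $0$ by Lemma \ref{prop:repindicatorZ} (the irreducible constituents of $\tilde N_j$ are non-isomorphic to $\rho_i$). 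Hence $\varpi_{\chi_{\rho_i}/m_i}$ acts on $M' = \bigoplus_j M_j$ as $c_i$ on the block $M_i$ and $0$ elsewhere, and it maps $M$ into $M$.

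Now take an arbitrary element $w = \sum_i w_i \in M'$ with $w_i \in M_i$. For each $i$, choose $m^{(i)} \in M$ with $\pi_i(m^{(i)}) = w_i$ (possible since $M_i = \pi_i(M)$). Applying $\varpi_{\chi_{\rho_i}/m_i}$ to $m^{(i)} \in M$ produces an element of $M$ which, by the previous paragraph, equals $c_i w_i$ (the $\pi_i$-component is $c_i \pi_i(m^{(i)}) = c_i w_i$ and all other components vanish). Thus $c_i w_i \in M$ for every $i$. Since each $c_i \mid \abs{G}$, we get $\abs{G} w_i = (\abs{G}/c_i)\, c_i w_i \in M$, and therefore $\abs{G} w = \sum_i \abs{G} w_i \in M$. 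As $w \in M'$ was arbitrary, $\abs{G} M' \subset M$, completing the proof.

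\textbf{Main obstacle.} The one point needing care is the interaction between the Schur-index rescaling and the isotypic decomposition: one must check that $\varpi_{\chi_{\rho_i}/m_i}$, which a priori is built from the complex irreducible constituents of $\rho_i \otimes \CC$, really does annihilate all of $\tilde N_j$ for $j \neq i$ and acts by a single scalar dividing $\abs{G}$ on $\tilde N_i$ — this is exactly what Lemma \ref{prop:repindicatorZ} together with Remark \ref{prop:repindicatorZstrong:remark} deliver, provided one has arranged that distinct $\tilde N_j$ share no isomorphic irreducible $\QQ$-constituent, which is precisely how the decomposition $M \otimes \QQ = \tilde N_1 \oplus \cdots \oplus \tilde N_l$ was defined. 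Everything else is bookkeeping with the projections $\pi_i$.
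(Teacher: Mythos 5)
Your proof is correct and follows essentially the same route as the paper: both deduce $\abs{G}M_i\subset M$ by observing that $\varpi_{\chi_{\rho_i}/m_i}$ restricts to an endomorphism of $M$ (Remark~\ref{prop:repindicatorZstrong:remark}) whose image is $c_iM_i$ with $c_i\mid\abs{G}$. The only difference is presentational — you spell out the trivial inclusion $M\subset M'$ and the bookkeeping with the projections $\pi_i$, which the paper leaves implicit.
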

	\begin{proof}
		Let $\rho_i$ be the irreducible component of $\QQ\otimes M_i$. Let $\varpi_{\frac{\chi_{\rho_i}}{m}}$ be the map from Remark \ref{prop:repindicatorZstrong:remark}. The image of this map is contained in $M$. By the same Remark, the image of this map contains $\abs{G}M_i$. Since $\abs{G}M_i$ is a subspace of $M$ for every index $i$, it follows that $\abs{G}(M_1\oplus\cdots\oplus M_l)=\abs{G}M'\subset M$.
	\end{proof}
	The square hull plays an important role in computing the conjugacy separability. The main strategy is is to compute the conjugacy separability functions on each of the groups $M_i\rtimes G$, and reducing the conjugacy separability function on $M\rtimes G$ to the product of some of these functions. The way the conjugates of an element in an abelian by finite group look, depend heavily on their projection on the finite group. Therefore in a lot of what follows, we start by fixing an element $g\in G$, the finite group. 
	
We introduce $4$ notions that measure whether or not we can separate conjugacy classes in small quotients in the $i$-th component or not. Although quite technical at this moment, the different cases will become clear in the proofs of our main results. 
	\begin{definition}
		Let $\rho:G\acts M$ be a representation of a finite group on a free $\ZZ$-module with square hull $M_1\oplus M_2 \oplus\cdots \oplus M_l$. Fix a positive integer $m$ and an element $g\in G$.
		
		For any $i\in\range{l}$, $h\in C(g)$ and $v_1,v_2\in M$, we say that \begin{itemize}
			\item the pair	$(v_1,v_2)$ \textbf{$g$-vanishes} in $(i,h)$ if $\pi_i(v_1-\rho(h)v_2)\in V_g(M_i)$; 
			\item otherwise, so if $\pi_i(v_1-\rho(h)v_2)\notin V_g(M_i)$, we say that $(v_1,v_2)$ is \textbf{$g$-weakly unsolvable} in $(i,h)$. We distinguish two different cases:
			\begin{itemize}
			\item$(v_1,v_2)$ is \textbf{$g,m$-locally unsolvable} in $(i,h)$ if ${\pi_i(v_1-\rho(h)v_2)\notin \abs{G}^mM_i+V_g(M_i)}$;\\
			\item$(v_1,v_2)$ is \textbf{$g,m$-globally unsolvable} in $(i,h)$ otherwise. 
\end{itemize}
			 \end{itemize}
	\end{definition}
	 \noindent  Notice that in above definitions, we do not talk about a specific subgroup $H$, or elements $v_h$. This is a consequence of the fact that $v_h-\rho(g)v_h\in V_g(M)$ and thus that adding this term would not change the equations. The bigger we take the integer $m$, the weaker the condition of locally unsolvable becomes, and thus how stronger the condition of globally unsolvable.
	
The idea of the previous definition is as follows. In the first case, the orbits are identical, and as such can not be separated in a finite quotient. If $(v_1,v_2)$ is locally unsolvable in $(i,h)$, then a quotient of exponent $m$ will suffice to separate conjugacy classes. However, in the globally unsolvable case, such an exponent will not suffice, but the orbits can be separated in some (potentially large) quotient that we will describe later on. 
	\begin{example}
		Let $G=\frac{\ZZ}{2\ZZ}\times\frac{\ZZ}{2\ZZ}$ act on $M=\ZZ^3$ as follows:$$
		\rho:G\rightarrow \mathrm{Aut}(M):(\overline{g_1},\overline{g_2})\mapsto\begin{pmatrix}
			(-1)^{g_1}&0&0\\
			0&(-1)^{g_2}&0\\
			0&0&(-1)^{g_1+g_2}
		\end{pmatrix}.
		$$
		Let $H$ be the subgroup $\abs{G}M\rtimes G\subset M\rtimes G$. Take $v_1=(1,\abs{G}^5,1)$ and $v_2=(-1,-\abs{G}^5,-1) = -v_1$. If $g=\overline{(0,0)}$ then we have that $W_g(M)=0$ and thus also $V_g(M)=0$. The decomposition $\ZZ\oplus\ZZ\oplus\ZZ$ is already one into irreducible factors and is thus the square hull of $G\acts M$. The projection maps onto these subrepresentations are the natural ones.
		
		We show that $(v_1,v_2)$ $g$-vanishes in $(1,h)$ where $h=(\overline{1,0})$. Indeed $v_1-\rho(h)v_2=(0,2\abs{G}^5,0)$ and thus is $\pi_1(v_1-\rho(h)v_2)=0$.
		In $(2,h)$ the pair $(v_1,v_2)$ is $g,5$-globally unsolvable, as $\pi_2(v_1-\rho(h)v_2)=2\abs{G}^5$ is an element of $\abs{G}^5\ZZ$ but not an element of $V_g(M_2)=0$. It thus also follows that $(v_1,v_2)$ is $g$-weakly unsolvable in $(2,h)$. Lastly $(v_1,v_2)$ is locally $g,5$-unsolvable in $(3,h')$ where $h'=(\overline{0,0})$. Indeed $\pi_3(v_1-\rho(h')v_3)=2\notin\abs{G}^5M_3+V_g(M_3)=\abs{G}^5M_3$. Again it follows that $(v_1,v_2)$ is $g$-weakly unsolvable.
	\end{example}
	
	We introduce $2$ similar notions.
	\begin{definition}
	Fix an element $g\in G$, then we say $K\subset\range{l}$ \textbf{$g,m$-admits} $(v_1,v_2)\in M\times M$ if for every $h\in C(g)$, either\begin{itemize}
			\item $(v_1,v_2)$ is $g$-strongly unsolvable with respect to $h$;\\
			\item there exists some $i\in\range{l}$ such that $(v_1,v_2)$ is $g,m$-locally unsolvable in $(i,h)$;\\
			\item there exists some $i\in K$ such that $(v_1,v_2)$ is $g,m$-globally unsolvable.
		\end{itemize}
	\end{definition} 
	
	The central idea behind above definition is (as will be proven in Lemma \ref{prop:conjMultipleCoppies:lemma}) that if $M_i$ is as above, then it is as easy to separate one element of $M_i$ from $V_g(M_i)$ in a finite quotient, as it is to separate multiple elements from that space. In the above, $K$ represents certain subrepresentations $M_i$ such that every element $v_1-\rho(h)v_2$ is either distinct from $V_g(M)$ after projection on one of these $M_i$ corresponding to $K$ (globally-unsolvable), or if it does not lie in $K$, it can be recognised in some quotient of uniformly bounded size (locally or strongly unsolvable).
	
	\begin{definition}
	Fix an element $g\in G$ and let $v_1,v_2,k_1,k_2\in M$. We say $K\subset\range{l}$ \textbf{$g$-admits} $(v_1,v_2,k_1,k_2)$ if for every $h\in C(g)$, one of the following holds:\begin{itemize}
			\item $(k_1,k_2)$ is $g$-strongly unsolvable in $h$;\\
			\item there exists some $i\in\range{l}$ such that $(k_1,k_2)$ is $g$-weakly unsolvable in $(i,h)$;\\
			\item there exists some $i\in K$ such that $(v_1,v_2)$ is $g$-weakly unsolvable in $(i,h)$.
		\end{itemize}
	\end{definition}
	
The intuition for the $K$ in this definition is similar as above, with the difference that the elements $(k_1,k_2)$ now capture the uniformly bounded case. By increasing $K$, the conditions become weaker because of the following lemma.
	
	\begin{lemma}\label{prop:admitancehereditary:lemma}
		Let $K\subset K'\subset\range{l}$. If $g \in G$ and $v_1,v_2,k_1,k_2 \in M$ are such that $K$ $g$-admits$(v_1,v_2,k_1,k_2)$ then $K'$ $g$-admits $(v_1,v_2,k_1,k_2)$ as well. Similarly if $m > 0$ and $K$ $g,m$-admits $(v_1,v_2)$ then so does $K'$.
	\end{lemma}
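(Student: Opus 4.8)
The plan is to unwind the two notions of admittance and observe that each definition is a conjunction (over $h \in C(g)$) of disjunctions of three clauses, of which only the last mentions the set $K$, and that this last clause is monotone under inclusion $K \subset K'$. So the whole argument is a bookkeeping check rather than a genuine computation.

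Concretely, for the statement about $g$-admittance I would fix an arbitrary $h \in C(g)$ and invoke the hypothesis that $K$ $g$-admits $(v_1,v_2,k_1,k_2)$ to get that one of the three bullets in the definition holds for this $h$. If the first bullet holds, i.e.\ $(k_1,k_2)$ is $g$-strongly unsolvable in $h$, this is literally a statement not involving $K$, so it witnesses the corresponding bullet for $K'$ as well; likewise if the second bullet holds, i.e.\ there is some $i \in \range{l}$ with $(k_1,k_2)$ $g$-weakly unsolvable in $(i,h)$, this again does not reference $K$. In the remaining case there is some $i \in K$ such that $(v_1,v_2)$ is $g$-weakly unsolvable in $(i,h)$; since $K \subset K'$ this same index lies in $K'$, so the third bullet holds for $K'$. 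As $h \in C(g)$ was arbitrary, $K'$ $g$-admits $(v_1,v_2,k_1,k_2)$.

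The argument for $g,m$-admittance is structurally identical: after fixing $h \in C(g)$, the hypothesis provides one of the three alternatives (strongly $g$-unsolvable in $h$; $g,m$-locally unsolvable in some $(i,h)$ with $i \in \range{l}$; or $g,m$-globally unsolvable in some $(i,h)$ with $i \in K$), and only the last involves $K$, transferring to $K'$ by inclusion. I do not expect any obstacle here — the content of the lemma is precisely the monotonicity of the admittance predicates in the parameter $K$, which is what later allows us to freely enlarge $K$ when combining bounds coming from different components $M_i$.
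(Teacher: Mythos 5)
Your proof is correct and is exactly the intended argument; the paper states this lemma without proof precisely because, as you observe, both admittance predicates are conjunctions over $h \in C(g)$ of three-way disjunctions in which only the third clause (an existential over $i \in K$) references $K$, and that clause is trivially monotone under $K \subset K'$.
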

	Both of the above definitions talk about a subset $K$ of $\range{l}$. To each of these subsets we associate a positive integer.
	\begin{definition}
		Let $\rho:G\acts M$ be a representation of a finite group on a free $\ZZ$-module with square hull $M_1\oplus M_2 \oplus\cdots \oplus M_l$ and $d_i$ the dimension of one of the irreducible subrepresentations of $M_i\otimes \CC$. For every $K\subset\range l$ the \textbf{dimension} of $K$ is given by:$$
		\dim K =\sum_{i\in K}d_i.
		$$
	\end{definition}
	\begin{remark}
		By Theorem \ref{thm:shur}, in the above we may replace the dimension of one of the irreducible subrepresentations of $M_i\otimes\CC$, by the dimension of one of the irreducible subrepresentation of $M_i\otimes\ZZG$.
	\end{remark}
	With this we can state a refinement of Theorem \ref{prop:maintheorem:theorem}.
	
	\begin{theorem}\label{prop:conjugacyseparabilittysplitvirtuallyabelian:theorem}
	With notations as above, we have that $\conj_H(n)\simeq \ln^{\mf k_1}$ and there exists some integer $m_H$ such that the following are equal\begin{itemize}
			\item $\mf k_1$;\\
			\item $\mf k_2=\max_{g\in G}\max_{\substack{(v_1,g)\in H\\(v_2,g)\in H}}\min\set{\dim K\mid K \text{ }g,m_H\mathrm{-admits} (v_1,v_2)}$;\\
			\item $\mf k_3=\max_{g\in G}\max_{\substack{(k_1,g),(k_2,g)\in H\\v_1,v_2\in M}}\min\set{\dim K\mid K \text{ }g\mathrm{-admits} (v_1,v_2,k_1,k_2)}$.
		\end{itemize}
	\end{theorem}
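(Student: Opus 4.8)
Here is a plan of attack. I write everything in the reduced setting of Proposition~\ref{prop:virtabEmbedsInSplitVirtab:lemma}, so $H\le M\rtimes_\rho G$ is an extension of $\abs{G}M$ by $G$.

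The plan is to prove the statement in the three parts announced before it: a purely combinatorial comparison giving $\mf k_2\le\mf k_3$, an upper bound $\conj_H\prec\ln^{\mf k_2}$, and a lower bound $\ln^{\mf k_3}\prec\conj_H$. Together these give $\ln^{\mf k_2}\prec\ln^{\mf k_3}\prec\conj_H\prec\ln^{\mf k_2}$, so all three functions are $\simeq$; as $\ln^a\simeq\ln^b$ forces $a=b$, this yields $\mf k_2=\mf k_3$ and $\conj_H\simeq\ln^{\mf k_2}$ at once, which is the assertion with $\mf k_1$ the common value. Throughout I would use two reductions: elements of $H$ with distinct images in $G$ are separated in the bounded quotient $H/\abs{G}M\cong G$, so only pairs $a=(v_1,g),b=(v_2,g)$ sharing the $G$-component matter; and since $H$ is quasi-isometric to $\ZZ^h$ with $H\cap M=\abs{G}M$ of finite index, $\norm{(v,g)}_S$ is, up to constants, the word norm of $v\in M$, so $\conj_H(n)$ is $\simeq$ the maximum of $\conj_H\bigl((v_1,g),(v_2,g)\bigr)$ over $g\in G$ and $v_1,v_2\in M$ of norm at most $n$ with $(v_j,g)\in H$.

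For the comparison of constants I would first note, via Lemma~\ref{prop:admitancehereditary:lemma} and torsion-freeness of each $M_i/V_g(M_i)$, that $\min\{\dim K\mid K\ g,m\text{-admits }(v_1,v_2)\}$ is non-increasing in $m$ — increasing $m$ turns "globally unsolvable" obstructions (which a set $K$ must see) into "locally unsolvable" ones (which any component sees) — hence stabilises, uniformly over the finitely many $g$ and the relevant families of pairs, and I take $m_H$ past this point (any larger value also works for the upper bound below). To obtain $\mf k_2\le\mf k_3$ I would fix $(g,v_1,v_2)$ realising the maximum defining $\mf k_2$ and build a companion pair $(k_1,k_2)$ with $(k_j,g)\in H$ — a truncation of $v_1,v_2$ obtained from the projections $\varpi$ of Remark~\ref{prop:repindicatorZstrong:remark}, retaining the "shallow" part and discarding the globally-unsolvable "deep" part — so that in the definition of "$K$ $g$-admits $(v_1,v_2,k_1,k_2)$" the two alternatives involving $(k_1,k_2)$ fire exactly at those $h\in C(g)$ where $(v_1,v_2)$ is strongly, respectively locally, unsolvable. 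Since local and global unsolvability both imply weak unsolvability, any $K$ that $g$-admits $(v_1,v_2,k_1,k_2)$ must then $g,m_H$-admit $(v_1,v_2)$, so $\dim K\ge\mf k_2$; maximising over $g$ and $(v_1,v_2)$ closes this part.

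For the upper bound, given non-conjugate $a=(v_1,g),b=(v_2,g)$ of norm $\le n$, I would take $K$ attaining the minimum in $\mf k_2$ for $(v_1,v_2)$, so $\dim K\le\mf k_2$, and construct a subrepresentation $N\subset\abs{G}M$ of controlled index keeping $v_1-\rho(h)v_2-(v_h-\rho(g)v_h)$ out of $N+\abs{G}W_g(M)$ for every $h\in C(g)$; by Corollary~\ref{prop:conjugatesinrepresentationquotient:corollary} such an $N$ separates $a$ and $b$. For each $h$ one of the three cases of "$K$ $g,m_H$-admits $(v_1,v_2)$" applies: the strongly unsolvable case is handled by the fixed bounded quotient $H/\abs{G}^{\max(3,m_H)}M$; the $g,m_H$-locally unsolvable case by a quotient of the relevant $M_i$ of exponent $\abs{G}^{m_H}$, again bounded; and the $g,m_H$-globally unsolvable case with $i\in K$ by the quantitative residual finiteness bound for $M_i\rtimes G$ from \cite{dere2023residual}, yielding an invariant quotient of $M_i$ of size $\prec\ln^{d_i}(n)$ that separates $\pi_i(v_1-\rho(h)v_2)$ from $V_g(M_i)$. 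Intersecting the finitely many resulting subrepresentations — using Lemma~\ref{prop:squareHullAlmostIncluded:lemma} to pass between $M$ and its square hull at the cost of a constant, and Lemma~\ref{prop:conjMultipleCoppies:lemma} to see that one quotient of $M_i$ can be arranged to serve all relevant $h$ simultaneously — gives $[H:N]\prec\prod_{i\in K}\ln^{d_i}(n)=\ln^{\dim K}(n)\le\ln^{\mf k_2}(n)$, and maximising over $g$ and $(v_1,v_2)$ finishes the upper bound.

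For the lower bound, by Lemmas~\ref{prop:comparegeometric}, \ref{prop:psubgeom} and \ref{prop:logarithmicLowerBound:lemma} it suffices to exhibit, for each $n$, non-conjugate elements of $H$ of norm at most $\kgv\range n$ that are not separated in any quotient of size below a fixed multiple of $\ln^{\mf k_3}(\kgv\range n)$. I would fix $g$ and $(v_1,v_2,k_1,k_2)$ realising the maximum in $\mf k_3$ with $K$ minimal, put $N=\kgv\range n$, and take $a=(k_1+Nv_1,g)$, $b=(k_2+Nv_2,g)$ (adjusting so these lie in $H$ and so the shallow contribution of $(k_1,k_2)$ survives the deep one); the definition of "$g$-admits" then forces $a\not\sim b$ in $H$, the surviving obstruction at the critical $h\in C(g)$ being that $\pi_i(v_1-\rho(h)v_2)\notin V_g(M_i)$ for some $i\in K$ with $N(v_1-\rho(h)v_2)$ being $\kgv\range n$-deep. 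By Corollary~\ref{prop:conjugatesinrepresentationquotient:corollary}, any separating quotient must separate a $\kgv\range n$-deep element of $M_i$ from $V_g(M_i)$ for each such $i$; the residual finiteness lower bounds of \cite{dere2023residual}, together with Lemma~\ref{prop:conjMultipleCoppies:lemma}, make this cost a quotient of $M_i$ of size at least comparable to $\ln^{d_i}(\kgv\range n)$, and since the $M_i$ for $i\in K$ are pairwise non-isomorphic isotypic components these costs multiply to $\ln^{\dim K}(\kgv\range n)=\ln^{\mf k_3}(\kgv\range n)$. The hard part will be, on the one hand, the construction of the companion pair $(k_1,k_2)$ in the comparison step, which must isolate the deep part of $(v_1,v_2)$ uniformly over all of $C(g)$, and, on the other, importing the quantitative residual finiteness estimates of \cite{dere2023residual} for the isotypic pieces in exactly the form needed and then gluing — respectively decoupling — the per-component quotients so that their sizes multiply precisely over $K$, with nothing lost into the constants coming from $\abs{G}$-torsion and from Lemma~\ref{prop:squareHullAlmostIncluded:lemma}.
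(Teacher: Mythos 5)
Your three-step architecture --- proving $\mf k_2\le\mf k_3$ combinatorially, $\conj_H\prec\ln^{\mf k_2}$ from above, $\ln^{\mf k_3}\prec\conj_H$ from below, then sandwiching --- is exactly the paper's, and the auxiliary tools you invoke (Lemmas~\ref{prop:logarithmicLowerBound:lemma}, \ref{prop:comparegeometric}, \ref{prop:psubgeom}, \ref{prop:conjMultipleCoppies:lemma}, Corollary~\ref{prop:conjugatesinrepresentationquotient:corollary}, and the residual finiteness estimates of \cite{dere2023residual}) are the right ones. The monotonicity observation that $m\mapsto\min\{\dim K : K\ g,m\text{-admits}\ (v_1,v_2)\}$ is non-increasing is correct, and since $\mf k_2(m)$ is a bounded non-increasing integer sequence, taking $m_H$ past its stabilization point is compatible with the rest of the argument, provided you also verify that this $m_H$ is large enough for the companion-pair construction below.

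The genuine gap is in the step $\mf k_2\le\mf k_3$. You describe the companion pair $(k_1,k_2)$ as ``a truncation of $v_1,v_2$ obtained from the projections $\varpi$ of Remark~\ref{prop:repindicatorZstrong:remark},'' but this is not sufficient: the $\varpi$ maps only decompose along isotypic components $M_i$, and within a fixed $M_i$ you still need to produce, from the single element $\pi_i(\rho(h_i)v_2)$, an element $k$ that is honestly fixed by the entire subgroup $\mc H_i h_i\inv\subset C(g)$ (where $\mc H_i$ is the set of $h$ at which $(v_1,v_2)$ vanishes or is globally unsolvable in $(i,h)$, and $h_i$ is a base point) and that differs from $\pi_i(\rho(h_i)v_2)$ by a multiple of $\abs{G}^5$ modulo $V_g(M_i)$. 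A coordinate-wise truncation produces something close to $\pi_i(\rho(h_i)v_2)$ but with no reason to be $\mc H_i h_i\inv$-invariant, so the property you demand --- that $(k_1,k_2)$ vanishes in $(i,h)$ precisely where $(v_1,v_2)$ vanishes or is globally unsolvable --- will fail for some $h\in\mc H_i$. The paper closes this hole with Lemma~\ref{prop:representationsAreScalable:lemma} (resting in turn on Lemmas~\ref{prop:fixPowerDecomposition:lemma} and \ref{prop:inverseImageRetainsPower:lemma}): if an element of $M_i/V_g(M_i)$ is fixed modulo a sufficiently high power $\abs{G}^{m_H}$ by every element of a subgroup, then after a controlled loss of precision it lies within $\abs{G}^5$ of an honest fixed vector, and this fixed-point approximation is the real reason $m_H$ must be as large as in Notation~\ref{not:globalconstant} rather than merely past a stabilization point. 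You correctly flag this uniformity over $C(g)$ as ``the hard part,'' but as written the plan does not close it.
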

	\begin{remark}
		In the above theorem, when the minimum is taken over the empty set, then we take the minimum to be equal to $0$ (instead of $\infty$ which would be more common)
	\end{remark}
	
	By the previous remark, $\mf k_3$ is an integer that always exists and thus Theorem \ref{prop:maintheorem:theorem} follows. In the remainder of the paper we will prove Theorem \ref{prop:conjugacyseparabilittysplitvirtuallyabelian:theorem}. First we will demonstrate in Section \ref{sec:2->3} that for some specific value $m_H>3$ (and thus also all higher values) $\mf k_2\leq\mf k_3$. Then we will demonstrate in Section \ref{sec:3->1} that $\conj_H(n)\succ \ln^{\mf k_3}(n)$ (notice that these statements are both independent of $m$). Lastly in Section \ref{sec:1->2} we will demonstrate that whenever $m>3$, then ${\conj_H(n)\prec\ln^{\mf k_2}(n)}$. The theorem then follows as $\ln^{k_1}(n)\simeq\ln^{k_2}(n)$ if and only if $k_1=k_2$.
	\begin{corollary}
		Let $H$ be a finitely generated virtually abelian group with irreducible holonomy representation, then $\conj_G(n)\simeq\mathrm{Rf}_G(n)$.
	\end{corollary}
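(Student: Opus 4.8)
\noindent The plan is to show that the exponents in the logarithmic growth of $\conj_H$ and $\RF_H$ coincide. Recall first that $\RF_H \prec \conj_H$ holds for every finitely generated group: if a finite quotient of $H$ makes the images of $h$ and $e_H$ non-conjugate, then in particular it makes the image of $h$ non-trivial. So it suffices to establish the reverse estimate $\conj_H \prec \RF_H$ under the hypothesis that the holonomy representation $\rho: G \acts M$ is irreducible over $\QQ$.

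For this I would apply Theorem~\ref{prop:conjugacyseparabilittysplitvirtuallyabelian:theorem}, which gives $\conj_H(n) \simeq \ln^{\mf k_1}(n)$ with $\mf k_1 = \mf k_3$, and evaluate $\mf k_3$ directly. When $\rho$ is $\QQ$-irreducible the module $M \otimes \QQ$ is isotypic, so the decomposition $\tilde N_1 \oplus \cdots \oplus \tilde N_l$ has a single block: $l = 1$ and the square hull is $M_1 = M$ itself. Consequently the only subsets $K$ of the index set are $\emptyset$ and $\{1\}$, of dimension $0$ and $d_1$ respectively, where $d_1$ is the dimension of a $\CC$-irreducible constituent of $M \otimes \CC$. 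Hence $\mf k_3 \in \{0, d_1\}$, and in particular $\conj_H(n) \prec \ln^{d_1}(n)$. (If $M \neq 0$ one also gets $\mf k_3 = d_1$, for instance by taking $g = e$, $k_1 = k_2 = 0$ and $v_1, v_2$ in distinct $G$-orbits, for which $\{1\}$ is $g$-admitting but $\emptyset$ is not; this is not needed below.)

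It then remains to identify $d_1$ with the exponent in the residual finiteness function. By the theorem of \cite{dere2023residual} recalled in the introduction, $\RF_H(n) \simeq \ln^{k}(n)$ where $k$ is governed by the $\CC$-irreducible subrepresentations of $\rho$; when $\rho$ is $\QQ$-irreducible these constituents are Galois-conjugate and hence share a common dimension $d_1$ by Theorem~\ref{thm:shur}, so $k = d_1$. Alternatively, $\RF_H \succeq \ln^{d_1}$ can be deduced from the lower bound of Section~\ref{sec:3->1} applied with $g = e$, $v_2 = 0$ and $k_1 = k_2 = 0$, since conjugacy-separating $(v_1, e)$ from $e_H$ is the same as separating $(v_1,e)$ from the identity. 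Combining $\RF_H \prec \conj_H$, the estimate $\conj_H \simeq \ln^{\mf k_1}$ with $\mf k_1 \leq d_1$, and $\RF_H \simeq \ln^{d_1}$ forces $\mf k_1 = d_1$ and therefore $\conj_H \simeq \ln^{d_1} \simeq \RF_H$. The corollary is thus really just a combination of the two main theorems; the only point requiring care — and the one I would regard as the crux — is matching the combinatorial description of the residual finiteness exponent in \cite{dere2023residual} with the quantity $d_1$ appearing here, everything else being immediate once one observes that an irreducible module has a single isotypic block.
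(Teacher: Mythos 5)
Your proposal follows essentially the same route as the paper: use the identity $\mf k_1 = \mf k_3$ from Theorem~\ref{prop:conjugacyseparabilittysplitvirtuallyabelian:theorem}, observe that $\QQ$-irreducibility forces the square hull to be a single block $M_1 = M$ so that every admitting set $K$ has $\dim K \leq d_1$, conclude $\conj_H \prec \ln^{d_1} \simeq \RF_H$, and combine with the universal bound $\RF_H \prec \conj_H$. The extra material (the direct exhibition of a pair forcing $\mf k_3 = d_1$, and the alternative RF lower bound via Section~\ref{sec:3->1}) is correct but, as you note, unnecessary once the two-sided comparison with $\RF_H$ is in place.
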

	\begin{proof}
		Let $G\acts M$ be the holonomy representation of $H$. By Lemma \ref{prop:virtabEmbedsInSplitVirtab:lemma} we can realise $H$ as a subgroup of $M\rtimes G$ extending $\abs{G}M$ with $G$. As $G\acts M$ is irreducible, the square hull decomposition is just $M$. Notice that for any choice of $g,v_1,v_2,k_1,k_2$, the set $$
		\set{\dim K\mid K \text{ }g\mathrm{-admits} (v_1,v_2,k_1,k_2)}
		$$ can contain no values larger then $\dim(\{1\})$, which is of course the dimension $d$ of one of the $\CC$-irreducible subrepresentation of $M\otimes \CC$.
		It follows that $$\mf k=\max_{g\in G}\max_{\substack{(k_1,g),(k_2,g)\in H\\v_1,v_2\in M}}\min\set{\dim K\mid K \text{ }g\mathrm{-admits} (v_1,v_2,k_1,k_2)}$$ is also bounded above by $d$ and thus that $\conj_H(n)\prec\ln(n)^d$.
		From \cite{dere2023residual} we have that $\mathrm{Rf}_H(n)\simeq\ln(n)^d$ and we thus have that $\conj_H(n)\prec\mathrm{Rf}_H(n)$. The result follows as $\conj_H(n)\succ\mathrm{Rf}_H(n)$ is always the case.
	\end{proof}

\label{sec:extra}

At the end of this section, we give some examples illustrating the  the bounds of Theorem \ref{prop:conjugacyseparabilittysplitvirtuallyabelian:theorem}. Notice that $M\otimes \CC$ splits in irreducible representations and these can be grouped up to Galois conjugation. Write $\mc M$ for this partition. Then $\conj_H(n)\prec \ln^k(n)$ where $k=\sum_{[M]\in\mc M}\dim(M)$. This bound however need not be sharp, as we illustrate with the following example.

	\begin{example}\label{prop:naiveupperbound:remark}
	Consider the group $H_0=\ZZ\rtimes\frac{\ZZ}{2\ZZ}$ where the second group acts non-trivially on the first. Then the group $H_0\oplus H_0$ has the same conjugacy separability function as $H_0$, being $\ln(n)$. However, $H_0\oplus H_0$ can as a virtually abelian group be written as $\ZZ^2\rtimes \frac{\ZZ}{2\ZZ}^2$ where the action is given by$$
		\rho:\frac{\ZZ}{2\ZZ}^2\rightarrow \mathrm{Aut}(\ZZ^2)
		$$
		such that $$\rho(1,0)=\begin{pmatrix}-1&0\\0&1
		\end{pmatrix}$$
		
		and$$\rho(0,1)=\begin{pmatrix}1&0\\0&-1
		\end{pmatrix}.$$
		
		Both $\ZZ\oplus\set{0}$ and $\set{0}\oplus \ZZ$ are irreducible subrepresentations, and as they have rank one, they remain irreducible over $\CC$. As $(1,0)$ acts non-trivially on the first, but trivially on the latter these representations are non-isomorphic over both $\QQ$ and $\CC$, even after Galois conjugation. Hence the bound described above only gives the non-exact value $\ln^2(n)$.
	\end{example}
		The conjugacy separability function may in some cases be strictly larger then the residual finiteness function as described in \cite{dere2023residual}.
	\begin{example}
	
		Let $G=\frac{\ZZ}{2\ZZ}\times\frac{\ZZ}{2\ZZ}$ act on $M=\ZZ^3$ as follows:$$
		\rho:G\rightarrow \mathrm{Aut}(M):(\overline{g_1},\overline{g_2})\mapsto\begin{pmatrix}
			(-1)^{g_1}&0&0\\
			0&(-1)^{g_2}&0\\
			0&0&(-1)^{g_1+g_2}
		\end{pmatrix}.
		$$
		
		Note that $M\rtimes G$ can also be realized as a subgroup of itself extending $\abs{G}M$ by $G$, with the same representation of the group $G$. To avoid confusion we denote with $H = \abs{G}M \rtimes G$ the subgroup of the bigger group $H_0=M\rtimes G$.
		The square hull of $M$ is in this case just $\ZZ\oplus\ZZ\oplus\ZZ$.
		
		Let $g = e$ and let $k_1=k_2=0$, then both $(k_1,g)$ and $(k_2,g)$ are trivial in $H_0$ and thus elements of $H$. It is immediate that $(k_1,k_2)$ $g$-vanishes in $(i,h)$ for all $i\in\left\{1, 2, 3\right\}$ and for all $h\in C(g)=G$. It also follows that $(k_1,k_2)$ is strongly unsolvable for no element $h\in C(g)$.
		Let $v_1=(1,1,1)\in M$ and let $v_2=(-1,-1,-1)\in M$. As $g$ acts trivially, both $W_g(M)$ and $V_g(M)$ are just the trivial module. Quick computations show that $(v_1,v_2)$ vanishes in the following cases.\\
		\begin{figure*}[!h]
		\centering
		\begin{tabular}{l|l l l}
			&1&2&3\\
			\hline
			$(\overline0,\overline0)$&u&u&u\\
			$(\overline0,\overline1)$&u&v&v\\
			$(\overline1,\overline0)$&v&u&v\\
			$(\overline1,\overline1)$&v&v&u\\
		\end{tabular}
	\end{figure*}\\
		A $v$ in column $i$ and row $h$ indicates that $(v_1,v_2)$ $g$-vanishes in $(i,h)$, similarly a $u$ indicates that $(v_1,v_2)$ is $g$-weakly unsolvable in $(i,h)$.
		We thus see that $\{1,2,3\}$ admits $(g,v_1,v_2,k_1,k_2)$. It is also easy to check that this is not the case for any proper subset of $\{1,2,3\}$, for instance for $K=\{1,2\}$, $(v_1,v_2)$ vanishes both in $(1,h)$ and $(2,h)$ where $h=(\overline1,\overline1)$. By the remark before Example \ref{prop:naiveupperbound:remark} it thus follows that $\conj_H(n)\prec\ln^{\dim(\{1,2,3\})}(n)$ and by Theorem \ref{prop:conjugacyseparabilittysplitvirtuallyabelian:theorem} these two are even equivalent. All three components of the square hull are irreducible over $\CC$ thus $\conj_H(n)\simeq\ln^3(n)$.
		
		As $G$ is abelian, all irreducible subrepresentation of $M\otimes \CC$ are $1$-dimensional. By the main result of \cite{dere2023residual} it thus follows that the residual finiteness function is given by $\ln(n)$. We thus have found a virtually-abelian group where the conjugacy separability function is strictly larger than the residual finiteness function. In a similar fashion, one can construct examples where the difference becomes arbitrarily large.
	\end{example}
	\section{Comparing the constants}\label{sec:2->3}
	In this section we will demonstrate that the constants $\mf k_2$ and $\mf k_3$ of our main result satisfy $\mf k_2\leq\mf k_3$. The central idea is to decompose the pair $(v_1,v_2)$ in the definition of $\mf k_2$ as a pair $(k_1+v_1',k_2+v_2')$ such that $k_1$ and $k_2$ capture information about the $m_H$-local unsolvability and strong unsolvability of $(v_1,v_2)$ and such that $v_1'$ and $v_2'$ capture the information about $m_H$-global unsolvability.
	
In order to find such a decomposition, the constant $m_H$ must be sufficiently large, mainly to prevent interference between strong and global unsolvability. First we have the following definition.
	
	\begin{definition}
		Let $M$ be a $\ZZ$-module and $v\in M$ a non-zero element. We define the \textbf{index} $I \in \mathbb{N}$ of $v$ in $M$ as the maximal positive integer such that there exists some element $v_0\in M$ with $v=Iv_0$. In this case we call the corresponding element $v_0 \in M$ the \textbf{radical} of $v$ in $M$. A submodule $M^\prime \subset M$ is called radical if $M^\prime$ contains the radical of all its elements.
	\end{definition}
The size of $m_H$ is a consequence of the following three lemmas, which allow to rewrite element satisfying additional properties. 
	
	\begin{lemma}\label{prop:fixPowerDecomposition:lemma}
		Let $M$ be a free $\ZZ$-module containing radical submodules $M_1$ and $M_2$ of finite dimension. For any $N \in \mathbb{N}$, there exists some $n \in \mathbb{N}$ such that for every $v_1\in M_1, v_2\in M_2$ and $m \in \mathbb{N}$ with $v_1-v_2\in N^{m+n}M$, there exists some $v_2'\in M_1\cap M_2$ such that $v_1-v_2'\in N^mM$.
	\end{lemma}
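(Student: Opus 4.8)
The plan is to reduce everything to a question about free $\ZZ$-modules of finite rank and the behaviour of the submodules $M_1, M_2, M_1\cap M_2$ under the filtration by powers $N^m M$. Since $M_1$ and $M_2$ are radical of finite dimension, I would pass to the finite rank free submodule $M_1 + M_2$; there is no loss in assuming $M$ itself has finite rank, and after choosing a suitable basis we may even assume $M = \ZZ^d$ and that $M_1$, $M_2$ are direct summands (this is exactly where radicality is used: a radical submodule of a free $\ZZ$-module is a direct summand). So the statement becomes: for summands $M_1, M_2 \subset \ZZ^d$, there is a fixed $n$ such that $v_1 - v_2 \in N^{m+n}\ZZ^d$ with $v_i \in M_i$ forces the existence of $v_2' \in M_1 \cap M_2$ with $v_1 - v_2' \in N^m\ZZ^d$.

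First I would isolate the right linear-algebra input. Consider the subgroup $L = M_1 + M_2 \subset \ZZ^d$ together with the sum map $s : M_1 \oplus M_2 \to L$, $(x,y)\mapsto x - y$; its kernel is the diagonal copy of $M_1 \cap M_2$, and $\Ima s = L$. The point is that $s$ need not be surjective onto $L$ as a map of $\ZZ$-modules with image a summand — rather, there is an integer $c$ (a product of the elementary divisors of $s$, or equivalently an integer annihilating $L/s(M_1\oplus M_2\text{'s saturation})$, bounded purely in terms of $M_1,M_2$) such that $c L \subset s(M_1 \oplus M_2)$ in a controlled way. Concretely: if $w \in L$ and $w \in N^{m'}\ZZ^d$ for $m'$ large, I want to write $w = x - y$ with $x \in M_1$, $y\in M_2$ and $x,y \in N^{m}\ZZ^d$. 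Choosing $n$ so that $N^n$ is divisible by all the relevant elementary divisors of the inclusion $M_1 + M_2 \hookrightarrow \ZZ^d$ and of the map $s$ does the job: given $v_1 - v_2 \in N^{m+n}M$, the element $w := v_1 - v_2$ lies in $N^{m+n}\ZZ^d \cap L$, hence in $N^m \cdot(\text{something in }L)$ up to the bounded denominators, and we may solve $w = x - y$ with $x \in N^m M_1, y \in N^m M_2$; then set $v_2' := v_1 - x = v_2 - y \in M_1 \cap M_2$, and $v_1 - v_2' = x \in N^m M \subset N^m\ZZ^d$, but I should track that $v_2'$ genuinely lands in $M_1 \cap M_2$ rather than merely in its saturation — which again is guaranteed by radicality of $M_1, M_2$ (an element that is simultaneously in $M_1+$(divisible stuff) and $M_2$ and is a $\ZZ$-multiple inside $M$ is forced into $M_1 \cap M_2$).

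The cleanest way to organize the choice of $n$: by the structure theorem for finitely generated abelian groups applied to $\ZZ^d \supset M_1 + M_2 \supset$ (image of $s$), the quotient $\ZZ^d / (M_1+M_2)$ is finite-by-free and $(M_1+M_2)/\Ima(s)$ is finite; let $e$ be a common exponent (a positive integer killing both torsion parts), and take $n$ with $e \mid N^n$ — such $n$ exists for any $N \ge 2$ since then $N^n \to \infty$, and for $N = 1$ the hypothesis $v_1 - v_2 \in M$ is vacuous but then $N^m M = M$ and we may take $v_2' = 0 \in M_1 \cap M_2$ after noting $v_1 \in M_1 \subset M$, so the statement is trivial; I would handle $N=1$ (or $N=0$) as a degenerate case up front, or simply note the lemma is only ever applied with $N = \abs{G} \ge 2$. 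The main obstacle I expect is precisely bookkeeping the interaction between ``divisible by a power of $N$'' and ``lying in the integral submodule $M_1 \cap M_2$'': one must be careful that clearing the finite denominator $e$ via a factor of $N^n$ does not accidentally push $v_2'$ out of $M$ or out of $M_1\cap M_2$. Radicality of $M_1$ and $M_2$ is what prevents this, and verifying that carefully — rather than any deep idea — is the real content of the proof.
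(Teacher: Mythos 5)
Your approach is essentially the same as the paper's: reduce to the finite-rank module $M_1 + M_2 =: L$ (and its saturation in $M$), use the finiteness of the torsion of $\sqrt{L}/L$ to choose $n$ so that divisibility by $N^{m+n}$ in $M$ forces divisibility by $N^m$ in $L$, then split the resulting element $v \in L$ into an $M_1$-part and an $M_2$-part and set $v_2' = v_2 - y$. The paper implements this concretely with the decomposition $L = M_1' \oplus M_2' \oplus (M_1 \cap M_2)$ and an index-counting argument; you implement it via the sum map $s\colon M_1 \oplus M_2 \to L$, $(x,y)\mapsto x-y$, which is the same idea in different clothing.

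There are, however, two real slips. First, your prescription \emph{take $n$ with $e \mid N^n$} (where $e$ is the exponent of the torsion of $\sqrt{L}/L$) is not achievable in general: if $e$ has a prime factor coprime to $N$, then no power of $N$ is divisible by $e$. What is needed, and what the paper does, is to absorb only the $N$-primary part of $e$, i.e.\ take $n \geq v_p(e)$ for every prime $p \mid N$; the primes of $e$ coprime to $N$ are invertible modulo $N^{m+n}$ and cause no obstruction to concluding $w \in N^m L$. Second, you attribute radicality to the wrong step: once you write $w = x - y$ with $x \in N^m M_1 \subset M_1$ and $y \in N^m M_2 \subset M_2$, the element $v_2' = v_1 - x = v_2 - y$ lies in $M_1 \cap M_2$ \emph{automatically}, no radicality required. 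Radicality of $M_1$ and $M_2$ is used earlier, to guarantee that $M_1 \cap M_2$ is a direct summand of each $M_i$ (equivalently, that $M_1 \oplus M_2 \twoheadrightarrow L$ splits integrally), so that one can pick genuine integral $a \in M_1, b \in M_2$ with $v = a - b$ rather than elements of the saturations. Relatedly, your description of $s$ is internally contradictory: you state $\Ima(s) = L$ and then that $s$ \emph{need not be surjective onto $L$}; it is surjective, and the finiteness issue lives entirely in the inclusion $L \hookrightarrow \sqrt{L} \subset M$, not in $L/\Ima(s)$. With these two points repaired your argument is correct and matches the paper's.
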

	\begin{proof}
		Take $N \in \mathbb{N}$ arbitrary. Define $M_1', M_2' \subset M$ such that $M_1=M_1'\oplus(M_1\cap M_2)$, $M_2=M_2'\oplus(M_1\cap M_2)$ and thus for $M_0=M_1+M_2$ it holds that $M_0=M_1'\oplus M_2'\oplus (M_1\cap M_2)$. Let $O$ be the exponent of the finite group $\frac{\sqrt{M_0}}{M_0}$ and define $n$ as the highest power for which some prime factor of $N$ appears in $O$.
		
		The difference $v_1-v_2$ is an element of $M_0$. Let $I$ be the index of $v_1-v_2$ in $M_0$ and $w$ be its radical in $M_0$. We know that $v_1-v_2\in N^{m+n}M$ and thus can be written as $N^{m+n}v_0$ for some $v_0\in M$. As $v_1-v_2$ belongs to $M_0$, we know that $v_0\in\sqrt{M_0}$. In particular $O'v_0\in M_0$ for some minimal integer $O'$ which satisfies $O'\mid O$.
		Notice that $w$ is also the radical of $O'v_0$ and thus $O^\prime v_0$ must be a multiple of $w$. We thus have $N^{m+n}\mid O'I$ and in particular $N^{m+n}\mid OI$. By the definition of $n$, we thus have that $N^m\mid I$, and in particular, $v_1-v_2$ can be written as $N^mv$ for some $v\in M_0$.
		
		As $M_0=M_1'\oplus M_2'\oplus (M_1\cap M_2)$, $v$ can in a unique way be written as $w_1+w_2+w_3$, where $w_1 \in M_1^\prime, w_2 \in M_2^\prime$ and $w_3\in M_1\cap M_2$ respectively. It follows that $v_1-v_2-N^mw_2=N^m(w_1+w_3)$ is an element of $M_1$, and in particular $v_2+N^mw_2$ is an element of $M_1$. As both $v_2$ and $w_2$ are elements of $M_2$, we also have that $v_2+N^mw_2\in M_2$. Lastly, as both $v_1-v_2$ and $N^mw_2$ belong to $N^mM$, we have that $v_1-v_2-N^mw_2$ also belongs to $N^mM$. From these it follows that $v_2'=v_2+N^mw_2$ satisfies the conditions.
	\end{proof}
	\begin{lemma}\label{prop:inverseImageRetainsPower:lemma}
		Let $M$ be a free $\ZZ$-module, and let $\varphi$ be an endomorphism on $M$. For any $N \in \mathbb{N}$, there exists some $n \in \mathbb{N}$ such that for every $m \in \mathbb{N}$ and $v\in M$ such that $\varphi(v)\in N^{m+n}M$ holds that $v$ can be written as $v'+k$ such that $k\in\ker(\varphi)$ and such that $v'\in N^mM$
	\end{lemma}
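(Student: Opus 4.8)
The plan is to reduce to a statement about a single endomorphism of a finite-dimensional free $\ZZ$-module by passing to the Smith normal form of $\varphi$, and then to control the ``loss of divisibility'' that occurs when pulling an element back through $\varphi$. Concretely, first I would fix a basis of $M$ (recall $M$ is free of finite rank) in which $\varphi$ is in Smith normal form, so that $\varphi$ is a diagonal matrix $\mathrm{diag}(d_1,\dots,d_r,0,\dots,0)$ with $d_i \neq 0$; the kernel of $\varphi$ is exactly the span of the last basis vectors, and on the first $r$ coordinates $\varphi$ acts by multiplication by the nonzero scalars $d_i$. Writing $v = (x_1,\dots,x_r,y_1,\dots,y_s)$ in this basis, the hypothesis $\varphi(v) \in N^{m+n}M$ says precisely that $d_i x_i \in N^{m+n}\ZZ$ for each $i \le r$, while the $y_j$ are unconstrained. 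Setting $k := (0,\dots,0,y_1,\dots,y_s) \in \ker(\varphi)$ and $v' := (x_1,\dots,x_r,0,\dots,0)$, it suffices to show each $x_i$ is divisible by $N^m$ once $n$ is chosen large enough.

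For the choice of $n$: let $D := d_1 \cdots d_r$ (or $D := \mathrm{lcm}(d_1,\dots,d_r)$, either works), and define $n$ to be the largest exponent with which any prime factor of $N$ divides $D$ — exactly as in the proof of Lemma \ref{prop:fixPowerDecomposition:lemma}. The key step is then the following elementary divisibility fact: if $d_i x_i \in N^{m+n}\ZZ$, i.e.\ $N^{m+n} \mid d_i x_i$, then since $\nu_p(d_i) \le n$ for every prime $p \mid N$, we get $\nu_p(x_i) \ge \nu_p(N^{m+n}) - \nu_p(d_i) \ge (m+n)\nu_p(N) - n$ for such $p$; taking $n \ge \nu_p(N)$-weighted bookkeeping care, one arranges $\nu_p(x_i) \ge m\,\nu_p(N)$, hence $N^m \mid x_i$. (More cleanly: $N^{m+n} \mid d_i x_i$ and $d_i \mid D$, where every prime of $N$ appears in $D$ to order $\le n$, forces $N^m \mid x_i$ by comparing $p$-adic valuations prime by prime, identical to the argument already used for $O$ in Lemma \ref{prop:fixPowerDecomposition:lemma}.) Thus $v' \in N^m M$ and $v = v' + k$ with $k \in \ker(\varphi)$, as required.

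The only mild subtlety — and the step I would be most careful about — is that the chosen $n$ must be uniform in $m$ and in $v$: it depends only on $N$ and on $\varphi$ (through the invariant factors $d_i$), which is exactly what the statement demands, so this is fine as long as one extracts $n$ from the Smith normal form before quantifying over $m$ and $v$. One should also note that the decomposition $v = v' + k$ is the one induced by the chosen basis and need not be canonical, but the lemma only asserts existence, so no further work is needed. I do not expect any genuine obstacle here; the content is the valuation computation, which is routine and already appears verbatim in the preceding lemma.
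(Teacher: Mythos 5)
Your proof is correct and follows essentially the same route as the paper's: the paper also controls the loss of divisibility by the exponent $O$ of the finite group $\sqrt{\varphi(M)}/\varphi(M)$ (which, in your Smith normal form coordinates, is precisely the last invariant factor $\lcm(d_1,\dots,d_r)=d_r$), sets $n=\max_{p\mid N}\nu_p(O)$, pulls $w$ back through $\varphi$ to get $v''$ with $\varphi(v'')=N^n w$, and then writes $v=N^m v''+k$. Your Smith normal form version merely makes the same valuation bookkeeping coordinate-wise explicit rather than working with the abstract exponent, so there is no substantive difference.
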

	\begin{proof}
	Take $N \in \mathbb{N}$ arbitrary. Let $O$ the exponent of the group $\frac{\sqrt{\varphi(M)}}{\varphi(M)}$ and $n$ be the highest power with which some prime factor of $N$ appears in $O$. Assume that $v \in M$ such that $\varphi(v) = N^{m+n}w$ for some $w \in M$. Similarly as in the previous lemma, we have that $N^n w\in\varphi(M)$, so there exists $v'' \in M$ such that $N^nw=\varphi(v'')$. Then $\varphi(N^nv'')=\varphi(v)$ and thus it follows that $v=N^nv''+k$ where $k$ belongs to the kernel of $\varphi$.		
	\end{proof}
	
	\begin{lemma}\label{prop:representationsAreScalable:lemma}
		Let $G\acts M$ be a representation of a finite group over a free $\ZZ$-module $M$ and let $N$ be a positive integer. There exists some constant $n$ such that for any $m >0$ and any $v\in M$ with corresponding subgroup ${\mc H=\set{h\in G\mid v=\rho(h)v\mod N^{n+m}M}}$, there exists some $k\in M$ such that $\rho(h)k=k$ for all $h\in\mc H$ and $v-k\in N^{m}M$.
	\end{lemma}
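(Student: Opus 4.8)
The plan is to reduce the statement to an arithmetic divisibility fact by passing to the quotient of $M$ by the fixed submodule of $\mc H$. Two preliminary observations, valid for every subgroup $\mc H\le G$, drive everything. Writing $F=M^{\mc H}$ for the submodule of $\mc H$-fixed vectors, $F$ is isolated in $M$: if $cx\in F$ with $c\in\ZZ\setminus\{0\}$, then $c(\rho(h)x-x)=\rho(h)(cx)-cx=0$ for every $h\in\mc H$, and torsion-freeness of $M$ forces $\rho(h)x=x$. Consequently $\bar M:=M/F$ is again a free $\ZZ$-module carrying an $\mc H$-action, and $\bar M$ has no nonzero $\mc H$-fixed vector: if $\rho(h)x-x\in F$ for all $h\in\mc H$, then the genuine fixed vector $Q(x):=\sum_{h\in\mc H}\rho(h)x$ lies in $F$ (reindexing the sum shows $\rho(h')Q(x)=Q(x)$), and hence $|\mc H|\,x=Q(x)-\sum_{h\in\mc H}(\rho(h)x-x)\in F$, so $x\in F$ by isolation. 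I will reuse this last computation almost verbatim.

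Next I would fix the constant. Let $G_N$ be the largest divisor of $|G|$ all of whose prime factors divide $N$, and choose $n\in\NN$ with $G_N\mid N^n$; this depends only on $G\acts M$ and $N$. Now take $m>0$, $v\in M$, and the associated $\mc H=\set{h\in G\mid \rho(h)v\equiv v \bmod N^{n+m}M}$. Running the computation above with $x=v$, but tracking the error term modulo $N^{n+m}M$ instead of modulo $F$, gives $Q(v)\in F$ and $|\mc H|\,v=Q(v)+\sum_{h\in\mc H}(v-\rho(h)v)$ with the sum in $N^{n+m}M$; hence in $\bar M$ we obtain $|\mc H|\,\bar v\in N^{n+m}\bar M$. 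Since $d:=|\mc H|$ divides $|G|$ and $\bar M$ is free, expanding $\bar v$ in a $\ZZ$-basis shows each of its (finitely many nonzero) coordinates is divisible by $N^{n+m}/\gcd(N^{n+m},d)$; and $\gcd(N^{n+m},d)\mid\gcd(N^{n+m},|G|)\mid G_N\mid N^n$, so each coordinate is in fact divisible by $N^m$, i.e.\ $\bar v\in N^m\bar M$. Lifting, $v\equiv N^m w\pmod F$ for some $w\in M$, and $k:=v-N^m w$ lies in $F=M^{\mc H}$, satisfies $\rho(h)k=k$ for all $h\in\mc H$, and $v-k=N^m w\in N^m M$.

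The only delicate point, and the reason the lemma needs a slack term $N^n$ at all, is the division by $|\mc H|$: this fails outright when $\gcd(|\mc H|,N)>1$, so one cannot simply take $k$ to be $\frac{1}{|\mc H|}Q(v)$. Moving to the fixed-point-free quotient $\bar M$ is precisely what turns the problem into ``divide $|\mc H|\,\bar v\in N^{n+m}\bar M$ by $|\mc H|$'', where the surplus exponent $n$ absorbs the $N$-part of $|G|$, which in turn bounds the $N$-part of $|\mc H|$. Finiteness of $G$ is used only to guarantee that $G_N$, and hence $n$, is a well-defined integer; the argument is otherwise completely parallel in spirit to the previous two lemmas.
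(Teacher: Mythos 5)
Your proof is correct, and it is a genuinely different (and shorter) route than the paper's. The paper proves the statement by induction on the number of generators of $\mc H$, using Lemmas \ref{prop:fixPowerDecomposition:lemma} and \ref{prop:inverseImageRetainsPower:lemma} as structural tools: it splits off one generator $h_i$ at a time, applies Lemma \ref{prop:inverseImageRetainsPower:lemma} to the endomorphism $\id_M-\rho(h_i)$ to peel off a kernel component, and then Lemma \ref{prop:fixPowerDecomposition:lemma} to land in the intersection of fixed-point submodules; the constant $n_{\mc H}$ accumulates as $n_1+n_2+n_{\mc H_0}$ along the induction. You instead pass once and for all to the quotient $\bar M=M/M^{\mc H}$ (free because $M^{\mc H}$ is isolated) and use the averaging operator $Q(v)=\sum_{h\in\mc H}\rho(h)v$: the identity $|\mc H|\,v = Q(v)+\sum_{h\in\mc H}(v-\rho(h)v)$ places $|\mc H|\,\bar v$ in $N^{n+m}\bar M$, and then the whole lemma reduces to the scalar divisibility observation that $\gcd(N^{n+m},|\mc H|)$ divides the $N$-part of $|G|$, hence divides $N^n$. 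This buys a much more explicit and uniform constant ($n$ just needs $N^n$ to kill the $N$-part of $|G|$, independent of the subgroup lattice), it avoids the recursion, and it makes transparent exactly why the slack exponent $n$ is necessary. The paper's approach, by contrast, establishes along the way the auxiliary Lemmas \ref{prop:fixPowerDecomposition:lemma} and \ref{prop:inverseImageRetainsPower:lemma}, but as far as this particular statement is concerned your argument is a clean improvement. I checked the two nontrivial points: $\gcd(N^{n+m},|\mc H|)\mid\gcd(N^{n+m},|G|)\mid G_N\mid N^n$ does hold with $G_N$ the $N$-part of $|G|$, and the coordinate-wise divisibility step is sound because $\gcd\bigl(d/\gcd(d,N^{n+m}),\,N^{n+m}/\gcd(d,N^{n+m})\bigr)=1$.
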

	\begin{proof}
		In fact, we will show that for any subgroup $\mc H<G$ there exists some constant $n_{\mc H}$ such that if for all $m>0$ and $v\in M$ with $v-\rho(h)v\in N^{n_{\mc H}+m}M$ for all $h\in {\mc H}$, then there exists some $k\in M$ such that $k=\rho(h)k$ for all $h\in {\mc H}$ and $v-k\in N^{m}M$. This statement is equivalent as $G$ only has a finite number of subgroups. We demonstrate this fact via induction on the number of generators of ${\mc H}$. 
		
		For the basis step with ${\mc H}$ the trivial subgroup, the result holds true for $n_{\mc H}=0$ and $v=k$. Suppose that the result holds true for any subgroup $\mc H_0$ with at most $i-1$ generators.
		Let ${\mc H}$ be a subgroup generated by the set $\{ h_1,h_2,\cdots h_i\}$ and let ${\mc H}_0$ be the subgroup of ${\mc H}$ generated by $\{h_1,h_2,\cdots,h_{i-1}\}$. Let $n_1$ be the integer from Lemma \ref{prop:fixPowerDecomposition:lemma} where $M_1$ and $M_2$ are given by \begin{align*}
			M_1&=\bigcap_{h\in {\mc H}_0}\ker(\id_M-\rho(h)) \\ M_2&=\ker(\id_M-\rho(h_i)).\end{align*} Let $n_2$ be the constant from Lemma \ref{prop:inverseImageRetainsPower:lemma} where $\varphi=(\id_M-\rho(h_i))$. We claim the result holds for $n_{\mc H}= n_1 + n_2 +n_{{\mc H}_0}$.
		
		Let $m>0$ be arbitrary and let $v\in M$ be such that $v-\rho(h)v\in N^{n_{\mc H}+ m }M$ for all $h\in {\mc H}$. In particular, we have that $v-\rho(h)v\in N^{n_1+n_2+n_{{\mc H}_0}+m}M$ for all $h\in {\mc H}_0$. By the induction hypothesis there exist some $k\in M$ such that $k=\rho(h)k$ for all $h\in {\mc H}_0$ and such that $v-k\in N^{n_1 + n_2 +m }M.$ By assumption, we know that $v-\rho(h_i)v\in N^{n_{\mc H}+m}M$, and we also have that $v-k-\rho(h_i)(v-k)\in N^{n_1+n_2+m}M.$ Combining these two equations it follows that $k-\rho(h_i)k\in N^{n_1+n_2+m}M$. By Lemma \ref{prop:inverseImageRetainsPower:lemma}, we have that $k$ can be written as $k=k'+v'$ where $\rho(h_{i})k'=k'$ and $v'\in N^{n_1 + m}M$. Since $k\in\bigcap_{h\in {\mc H}_0}\ker(\id_M-\rho(h))$, we can apply Lemma \ref{prop:fixPowerDecomposition:lemma} on $k-k'\in N^{n_1 + m}M$ and obtain that there exists some $k''\in M_1 \cap M_2$ such that $k-k''\in N^{m}$. Notice that as $k''=\rho(h)k''$ on a generating set of ${\mc H}=\langle {\mc H}_0,h_i \rangle$, it is invariant under $\rho(h)$ for every $h\in {\mc H}$. The result follows as $v-k''\in N^{n_1 + n_2 + m}M+ N^{m}M=N^{m}M$.
	\end{proof}

	The last lemma will be used to decompose a pair $(v_1,v_2)$ as ${(v_1'+k_1,v_2'+k_2)}$ such that $v_1'$ and $v_2'$ belong to $\abs{G}^3 M$, and as such they will not influence what happens separability at a small scale (which we called strong unsolvability). Notice that in the above lemma, the constant $n$ is not unique, however if the result holds for $n$, then so does it for higher values.
	\begin{notation}\label{not:globalconstant}
		Let $G\acts M$ be a representation with square hull $M_1\oplus\cdots\oplus M_l$. Let $n_{i,g}$ be the minimal constant for which Lemma \ref{prop:representationsAreScalable:lemma} holds for the representation $C(g)\acts M_i/V_g(M_i)$ and the constant $N=\abs{G}$. Then we define a constant $m_H=5 + \max \left\{n_{i,g} \mid i \in \{1, \ldots, l\}, g \in G \right\}$ for the group $H$ with notations as before.
	\end{notation}
We added the notation $H$ to make clear that we will apply this fixed constant $m_H$ further for the virtually abelian groups $H$ under consideration. However, one can define this constant for any representation $G \acts M$. We will demonstrate that the lower bound of Theorem \ref{prop:conjugacyseparabilittysplitvirtuallyabelian:theorem} holds for $m=m_H$ and the upper bound for any sufficiently large value of $m$ (here $m\geq3$ suffices). First, we need some more information about the subgroups $V_g(M)$ and $W_g(M)$. 
	
	\begin{lemma}\label{prop:WIndexbound:lemma}
		Let $G$ be a finite group with $g \in Z(G)$, and let $G\acts M$ be an irreducible representation on a free $\ZZ$-module $M$ such that $g$ acts non-trivially. Then $\abs G M\subset W_g(M)$.
	\end{lemma}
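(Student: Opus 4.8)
Let $G$ be a finite group with $g \in Z(G)$, and let $G \acts M$ be an irreducible representation on a free $\ZZ$-module $M$ such that $g$ acts non-trivially. Then $\abs{G}M \subset W_g(M)$ where $W_g(M) = \Ima(\id_M - \rho(g))$.

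The plan is to exploit that $g$ being central makes $\id_M-\rho(g)$ a $G$-equivariant endomorphism of $M$, so that its image is governed by the structure of $\rho(g)$ inside the endomorphism algebra of the irreducible representation $M\otimes\QQ$. First I would observe that $\rho(g)$ commutes with all of $\rho(G)$, hence lies in $D:=\mathrm{End}_{\QQ G}(M\otimes\QQ)$, which by Schur's lemma is a finite-dimensional division algebra over $\QQ$. The subalgebra $\QQ[\rho(g)]\subseteq D$ is then a commutative integral domain that is finite-dimensional over $\QQ$, hence a field; consequently the minimal polynomial $\mu$ of $\rho(g)$ on $M\otimes\QQ$ is irreducible. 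Since $\rho(g)$ has finite order dividing $\abs{G}$, $\mu$ divides $x^{\abs{G}}-1$, so $\mu=\Phi_e$ for some $e\mid\abs{G}$, and $e>1$ because $\rho(g)\neq\id$. (If one prefers to avoid the division-algebra argument, one can instead diagonalise $\rho(g)$ over $\CC$, use Theorem \ref{thm:shur} to see that its eigenvalues form a single Galois orbit of roots of unity, hence are all $\neq 1$, and read off $\mu=\Phi_e$ that way.)

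Next comes a purely number-theoretic step inside $\ZZ[\zeta_e]$. Writing $\Phi_e(1)=\prod_{\zeta}(1-\zeta)$ over the primitive $e$-th roots of unity $\zeta$, and using that $1-\zeta_e$ divides each $1-\zeta_e^k$, one gets $1-\zeta_e\mid\Phi_e(1)$ in $\ZZ[\zeta_e]$. Moreover $\Phi_e(1)$ equals $1$ when $e$ has at least two distinct prime factors, and equals a prime $p$ (necessarily with $p\mid e$) when $e$ is a prime power; since $e\mid\abs{G}$, in both cases $\Phi_e(1)\mid\abs{G}$ in $\ZZ$. Hence there is some $\alpha\in\ZZ[\zeta_e]$ with $(1-\zeta_e)\alpha=\abs{G}$, and choosing a polynomial representative $q(x)\in\ZZ[x]$ for $\alpha$ this becomes an identity $(1-x)q(x)=\abs{G}+\Phi_e(x)r(x)$ in $\ZZ[x]$ for some $r\in\ZZ[x]$.

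Finally I would substitute $x=\rho(g)$. Since $\Phi_e(\rho(g))$ vanishes on $M\otimes\QQ$ and $M$ is torsion-free, $\Phi_e(\rho(g))=0$ on $M$, so the identity yields $(\id_M-\rho(g))\comp q(\rho(g))=\abs{G}\,\id_M$ as endomorphisms of $M$. Therefore for every $v\in M$ we have $\abs{G}v=(\id_M-\rho(g))\bigl(q(\rho(g))v\bigr)\in\Ima(\id_M-\rho(g))=W_g(M)$, which is exactly the claim. The only genuinely delicate point is the first paragraph, namely establishing that the minimal polynomial of $\rho(g)$ is a single cyclotomic polynomial without assuming $G$ abelian; everything after that is routine cyclotomic arithmetic.
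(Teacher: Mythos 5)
Your proof is correct, but it takes a genuinely different route from the paper's. Both arguments hinge on the fact that $\Phi_e(\rho(g))=0$ for a single cyclotomic polynomial $\Phi_e$ with $e>1$ and $e\mid\abs{G}$, but you establish this via Schur's lemma (the endomorphism algebra $\mathrm{End}_{\QQ G}(M\otimes\QQ)$ is a division algebra, so $\QQ[\rho(g)]$ is a field and the minimal polynomial is irreducible), whereas the paper avoids invoking Schur's lemma entirely: it observes that for $i\mid n$, $i<n$, the $G$-map $\Phi_i(\rho(g))$ has a radical kernel which, by Lemma \ref{prop:iredIfSubspaceFiniteIndex:lemma}, is either $0$ or all of $M$, rules out the latter since $\rho(g)$ has order $n$, and then forces $\Phi_n(\rho(g))=0$ from the factorisation of $\rho(g)^n-\id$. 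Once the minimal polynomial is pinned down, the second half also diverges: the paper evaluates the identity $(X-Y)\cdots(X-Y^{n-1})=X^{n-1}+\cdots+1$ in $\ZZ[Y]/(\Phi_n(Y))$ at $X=1$ to get $(1-T)\cdots(1-T^{n-1})=n$, giving $nM\subset W_g(M)$ directly and then using $n\mid\abs{G}$; you instead observe $(1-\zeta_e)\mid\Phi_e(1)\mid\abs{G}$ in $\ZZ[\zeta_e]$, lift the resulting factorisation $\abs{G}=(1-\zeta_e)\alpha$ to a polynomial identity in $\ZZ[x]$ modulo $\Phi_e$, and substitute $\rho(g)$ to land directly on $\abs{G}M\subset W_g(M)$. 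Your route requires the (mild) cyclotomic case analysis on $\Phi_e(1)$ and knowledge that $\mathrm{End}_{\QQ G}$ is a division algebra, while the paper's route stays entirely inside the $G$-map machinery already developed in the preliminaries; but both are complete, and the final identity $\abs{G}\,\id=(\id-\rho(g))\comp q(\rho(g))$ you produce is a valid substitute for the paper's product formula.
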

	\begin{proof}
	Write $T = \rho(g)$ and let $n$ be the order of $T$. First we will show that $$
		\Phi_n(T)=0
		$$
		where $\Phi_i$ is the $i$'th cyclotomic polynomial.	Notice that whenever $i\mid n$ but $i\neq n$, then $\Phi_i(T) \neq 0$ does not vanish, as this would imply that $T^i-\id_M = 0$ and thus that $T$ has order $i$. As $g$ is central, the linear map $T$ is a $G$-map, and thus it follows that $\Phi_i(T)$ is also a $G$-map. The kernel of $\Phi_i(T)$ is thus either finite index in $M$, or trivial. As $M$ is torsion-free, the kernel must be radical, and thus by the previous the first option does not occur, showing that the kernel must be $0$.	It follows in particular that 
		$$\prod_{\substack{i\mid n\\i\neq n}}\Phi_i(T)$$
		also has trivial kernel. Notice that $$
		\prod_{\substack{i\mid n\\i\neq n}}\Phi_i(T) \cdot \Phi_n(T)=T^n-\id_M=0,
		$$
	which implies that $\Phi_n(T)$ must have a non-trivial kernel. Since $\Phi_n(T)$ is a $G$-map, this implies that $\Phi_n(T)=0$.
		
	Write $R= \frac{\ZZ[Y]}{(\Phi_n(Y))}$, then using the previous we find that there exists a morphism of $\ZZ$-algebras
	$$
		\psi:R \rightarrow \ZZ[T]
		$$ by mapping $Y$ to $T$, and where the right hand side is regarded as a subring of the endomorphism algebra on $M$.
		In $R[X]$ it holds that $$
		(X-Y)(X-Y^2)\cdots(X-Y^{n-1})=X^{n-1}+X^{n-2}+\cdots+1.
		$$
		Evaluating this polynomial in $1$ then gives$$
		(1-Y)(1-Y^2)\cdots(1-Y^{n-1})=n.
		$$
		If we apply the morphism $\psi$ to the previous equality, we obtain$$
		n=(1-T)(1-T^2)\cdots(1-T^{n-1})
		$$
		In particular it follows that $n M \subset \Ima (1-T) = W_g(M)$, and thus the result follows from the fact that $n\big\vert\abs{\rho(G)}\big\vert\abs{G}$.
	\end{proof}
	
	We can now find a radical subspace derived from $W_g(M)$.
		\begin{lemma}\label{prop:WRadicalInSubgroup:lemma}
		Let $\rho:G\acts M$ be a representation of a finite group on a free $\ZZ$-module. For any $g\in G$, the subspace $W_g(M)\cap\abs{G}^2M$ is radical in $\abs{G}^2M$.
	\end{lemma}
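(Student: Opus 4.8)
The plan is to deduce the statement from the single inclusion $n\,V_g(M)\subseteq W_g(M)$, where $n$ denotes the order of the automorphism $T:=\rho(g)$, together with the fact that $V_g(M)=\sqrt[M]{W_g(M)}$, being a radical submodule, is radical in $M$.

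First I would establish $n\,V_g(M)\subseteq W_g(M)$. Since $T^n=\id_M$, the polynomial $P(X)=1+X+\cdots+X^{n-1}$ satisfies $P(T)\bigl(\id_M-T\bigr)=T^n-\id_M=0$, so $W_g(M)=\Ima\bigl(\id_M-T\bigr)\subseteq\ker P(T)$; as the kernel of an integral endomorphism of a torsion-free module is radical, we also get $V_g(M)=\sqrt[M]{W_g(M)}\subseteq\ker P(T)$. Now observe the integral identity
\[
n-P(X)=\sum_{k=1}^{n-1}\bigl(1-X^k\bigr)=\bigl(1-X\bigr)Q(X),\qquad Q(X)=\sum_{k=1}^{n-1}\bigl(1+X+\cdots+X^{k-1}\bigr)\in\ZZ[X].
\]
For any $v\in V_g(M)$ we then have $P(T)v=0$, hence $n\,v=\bigl(n\,\id_M-P(T)\bigr)v=\bigl(\id_M-T\bigr)\bigl(Q(T)v\bigr)\in\Ima\bigl(\id_M-T\bigr)=W_g(M)$, since $Q(T)v\in M$. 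This yields $n\,V_g(M)\subseteq W_g(M)$.

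Next, since $n$ (the order of $\rho(g)$) divides $\abs{G}$, and hence $\abs{G}^2$, I would show $W_g(M)\cap\abs{G}^2M=V_g(M)\cap\abs{G}^2M$. The inclusion $\subseteq$ is immediate from $W_g(M)\subseteq V_g(M)$; for $\supseteq$, take $x\in V_g(M)\cap\abs{G}^2M$ and write $x=n\,y$ with $y\in M$. As $n\,y=x\in V_g(M)$ and $V_g(M)$ is radical, $y\in V_g(M)$, so $x=n\,y\in n\,V_g(M)\subseteq W_g(M)$ by the previous step. Finally one checks that $V_g(M)\cap\abs{G}^2M$ is radical in $\abs{G}^2M$: if $x$ lies in it and $x=I\,x_0$ with $x_0\in\abs{G}^2M$ its radical in $\abs{G}^2M$, then $x_0\in M$ and $I\,x_0=x\in V_g(M)$, whence $x_0\in V_g(M)$ because $V_g(M)$ is radical in $M$, so $x_0\in V_g(M)\cap\abs{G}^2M$. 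Combining the two displayed facts gives that $W_g(M)\cap\abs{G}^2M$ is radical in $\abs{G}^2M$.

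The only step that is not routine bookkeeping is producing the explicit integral cofactor $Q(X)$ of $1-X$ realizing multiplication by $n$ on $\Ima(\id_M-T)$. This is essentially the mechanism behind Lemma \ref{prop:WIndexbound:lemma}, but working directly with $1+X+\cdots+X^{n-1}$ in place of the cyclotomic polynomial $\Phi_n$ lets us avoid the centrality and irreducibility hypotheses needed there, which is exactly what makes it applicable for an arbitrary $g\in G$.
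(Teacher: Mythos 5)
Your proof is correct, but it takes a genuinely different and more elementary route than the paper. The paper first reduces to the case where $g$ is central (passing to $C(g)$), then decomposes via the square hull $M_1 \oplus \cdots \oplus M_l$ and reduces the claim to each isotypic component $M_i$, where it applies Lemma \ref{prop:WIndexbound:lemma} (an index bound $\abs{G}M_i \subset W_g(M_i)$ that needs irreducibility and passes through cyclotomic polynomials, via $\Phi_n(\rho(g))=0$ and the identity $(1-\zeta_n)\cdots(1-\zeta_n^{n-1})=n$). You bypass the entire decomposition: the integral factorization $n - P(X) = (1-X)Q(X)$ with $P(X)=1+X+\cdots+X^{n-1}$ gives directly that multiplication by $n$ factors through $\id_M - T$ on $\ker P(T) \supseteq V_g(M)$, yielding $nV_g(M)\subseteq W_g(M)$ for arbitrary $g$ with no centrality or irreducibility assumption. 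From there the reduction $W_g(M)\cap\abs{G}^2M = V_g(M)\cap\abs{G}^2M$ and the radicality of the latter are routine, as you say. One consequence worth noting: since your argument only uses $n \mid \abs{G}^2$ (already $n \mid \abs{G}$ suffices), it actually proves the slightly stronger statement that $W_g(M)\cap\abs{G}M$ is radical in $\abs{G}M$. The paper's route has the virtue of connecting the lemma to the square-hull machinery used throughout, but your argument is shorter, self-contained, and avoids invoking Lemma \ref{prop:WIndexbound:lemma} altogether.
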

	\begin{proof}
		In the above statement, we may assume that $g$ is central in $G$, as if this is not the case we may replace $G$ by $C(g)$. Indeed, this does not change the action of $g$ on $M$, and hence neither $W_g(M)$. Moreover, $\abs{C(g)}$ is a divisor of $\abs{G}$.
		
		Let $M_1\oplus M_2\oplus\cdots\oplus M_n\supset M$ be the square hull of $M$ and consider $\abs{G}\pi_i: M \to M$ be the $G$-map taking $(m_1,m_2,\cdots m_n)$ to $(0,0,\cdots \abs{G}m_i,0,\cdots,0)$. By Remark \ref{prop:repindicatorZstrong:remark} this is precisely some integer multiple of the map $\varpi_{\chi_i}$, and can thus indeed be seen as a map from $M$ to $M$.
		Now for any $w\in V_g(M)$, we have that $\abs{G}\pi_i(w)\in\abs{G}\pi_i(V_g(M))\subset V_g(M_i)$. Furthermore $\abs{G}w=\sum_{i=1}^l\abs{G}\pi_i(w)$. 
		
	It suffices to show that for any $w \in V_g(M)$, it holds that $\abs{G}^2 w \in W_g(M)$. We claim it even suffices to show that whenever $w\in V_g(M_i)$, then $\abs{G}w\in W_g(M_i)$. Indeed if this is the case, and $w\in V_g(M)$, then we would also have $\abs{G}\pi_i(w)\in V_g(M_i)$ and thus that $\abs{G}^2\pi_i(w)\in W_g(M)$ or thus $\abs{G}^2w=\sum_{i=1}^l \abs{G}^2\pi_i(w)\in W_g(M).$
		
		For every subspace $M_i$, there are two possibilities, namely either $g$ acts trivially, or it does not. In the first case, all expression of the form $v-\rho(g)v$ vanish, and as such $W_g(M_i)$ and $V_g(M_i)$ are trivial. As indeed $\abs{G}0=0$, the statement in this case holds true. So it suffices to consider the subspaces $M_i$ where $\rho(g)$ is non-trivial. As we know that all irreducible subrepresentations of $M_i \otimes \QQ$ are isomorphic, the map $\rho(g)$ has the same order on each of them. It thus follows by the Lemma \ref{prop:WIndexbound:lemma} that $\abs{G} M_i \subset W_g(M_i)$, which was exactly the claim.
	\end{proof}
	Using the case distinction of the previous proof, we find also a description of $V_g(M)$.
	
	\begin{lemma}\label{prop:structOfVgM:lemma}
		Let $G\acts M$ be a representation of a finite group on a free abelian $\ZZ$-module and $g \in Z(G)$. If we write $M \otimes \QQ$ as decomposition into $\QQ$-irreducible representations $N_1\oplus N_2\oplus\cdots \oplus N_m$ and $\mc I=\set{i\in\range{m} \Big\vert \rho(g)\mid_{N_i}\neq\id_M}$, then $$V_g(M)=M\cap\bigoplus_{i\in\mc I}N_i.$$
	\end{lemma}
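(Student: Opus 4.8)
The plan is to pass to the rational vector space $\bar M = M \otimes \QQ$ and reduce the claim to an equality of $G$-subrepresentations there. First I would record two elementary reductions. Since $M$ is torsion-free it embeds into $\bar M$, and for any submodule $N \subset M$ the radical satisfies $\sqrt[M]{N} = M \cap \QQ N$, where $\QQ N$ denotes the $\QQ$-span of $N$ inside $\bar M$. Moreover, the $\QQ$-span of $W_g(M) = \Ima(\id_M - \rho(g))$ equals $\Ima(\id_{\bar M} - \rho(g))$, the image of the $\QQ$-linear extension. Combining these, $V_g(M) = \sqrt[M]{W_g(M)} = M \cap \Ima(\id_{\bar M} - \rho(g))$, so it suffices to show $\Ima(\id_{\bar M} - \rho(g)) = \bigoplus_{i \in \mc I} N_i$ as subspaces of $\bar M$.

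For the main step, set $\varphi = \id_{\bar M} - \rho(g)$. Since $g \in Z(G)$, the operator $\rho(g)$ is a $G$-morphism, hence so is $\varphi$; and as $\rho(g)$ has finite order it is semisimple, giving a $G$-invariant decomposition $\bar M = \ker\varphi \oplus \Ima\varphi$ with $\ker\varphi = \bar M^{g}$ the fixed subspace. I would then identify $\ker\varphi$ with $\bigoplus_{i \notin \mc I} N_i$. The key point is that, because $g$ is central, $\rho(g)$ restricts to the identity on one irreducible subrepresentation of a given isomorphism type if and only if it does so on every irreducible of that type (a $G$-isomorphism intertwines $\rho(g)$). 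Hence $\mc I$ is a union of isotypic classes, both $\bigoplus_{i \in \mc I} N_i$ and $\bigoplus_{i \notin \mc I} N_i$ are sums of isotypic components (so they are independent of the chosen decomposition and stable under $\rho(g)$), and $\bar M^{g}$, being a subrepresentation all of whose irreducible constituents $S$ satisfy $\rho(g)|_S = \id$, is exactly the sum of the isotypic components on which $g$ acts trivially, i.e.\ $\bar M^{g} = \bigoplus_{i \notin \mc I} N_i$.

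To finish, note that $\varphi$ preserves the $\rho(g)$-stable subspace $\bigoplus_{i \in \mc I} N_i$ and has trivial kernel on it (that kernel is $\bigl(\bigoplus_{i \in \mc I} N_i\bigr) \cap \bar M^{g} = 0$), so $\varphi$ is bijective there; together with $\varphi \equiv 0$ on $\bar M^{g}$ this forces $\Ima\varphi = \bigoplus_{i \in \mc I} N_i$, and intersecting with $M$ gives the statement. I expect the only real obstacle to be bookkeeping: the fixed decomposition $N_1 \oplus \cdots \oplus N_m$ need not be respected by $\rho(g)$ when some irreducible type occurs with multiplicity, so one must route the argument through the canonical isotypic decomposition and use centrality of $g$ to control $\rho(g)$ across isomorphic summands. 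Alternatively, one can reach the same conclusion via the case distinction already used in the proof of Lemma \ref{prop:WRadicalInSubgroup:lemma}: after replacing $G$ by $C(g)$, split the square hull $M' = M_1 \oplus \cdots \oplus M_l$ of $M$ into the pieces on which $g$ acts trivially, where $W_g(M_j) = V_g(M_j) = 0$, and those on which $g$ acts non-trivially, where Lemma \ref{prop:WIndexbound:lemma} gives $\abs{G} M_j \subset W_g(M_j)$ and hence $V_g(M_j) = M_j$; since $\abs{G}M' \subset M \subset M'$ by Lemma \ref{prop:squareHullAlmostIncluded:lemma}, the $\QQ$-span of $W_g(M)$ is $\bigoplus_{i \in \mc I} N_i$, and intersecting with $M$ again yields $V_g(M) = M \cap \bigoplus_{i \in \mc I} N_i$.
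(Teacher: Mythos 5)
Your argument is correct, and its core is the same observation as in the paper: on each $N_i$ the restriction of $\id_M - \rho(g)$ is either zero (when $i \notin \mc I$) or, by irreducibility, an isomorphism onto $N_i$ (when $i \in \mc I$), and this identifies $V_g(M)$ with $M \cap \bigoplus_{i \in \mc I} N_i$. The packaging is mildly different --- you reduce to $\bar M = M \otimes \QQ$, rewrite $V_g(M) = M \cap \Ima(\id_{\bar M} - \rho(g))$, and then read off the image via the kernel/image splitting of the semisimple operator $\varphi$, whereas the paper stays closer to $M$ and proves the two inclusions directly, using that the right-hand side is radical --- but these are the same proof dressed differently.

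The one point worth correcting is the ``obstacle'' you raise at the end: it is not a genuine one. Each $N_i$ is by hypothesis a $G$-subrepresentation of $\bar M$, which means precisely that $\rho(h)(N_i) = N_i$ for \emph{every} $h \in G$, in particular for $h = g$. Hence $\rho(g)$, and with it $\varphi = \id_{\bar M} - \rho(g)$, automatically preserves the fixed decomposition $N_1 \oplus \cdots \oplus N_m$, whether or not irreducible types occur with multiplicity. No detour through the isotypic decomposition is needed, nor is the alternative route via the square hull. Centrality of $g$ is used only to make $\varphi$ a $G$-map, so that Schur's lemma applies on each $N_i$ and the kernel and image are subrepresentations; it plays no role in keeping $\rho(g)$ from mixing isomorphic summands, since that is already ruled out by $G$-invariance of each $N_i$.
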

	\begin{proof}
		Notice that $\rho(g)$ is a $G$-map, and as such $\Ima\left(\id_M-\rho(g)\right)$ is a subrepresentation.
		For $i\in\range m$, suppose there exists some $v\in N_i\without 0$ such that $v\neq\rho(g)v$, then $(\id_M-\rho(g))N_i$ is a non-trivial subrepresentation and as such all of $N_i$. Conversely if $\rho(g)$ acts trivially on $v\in N_i\without 0$, then the kernel of $(\id_M-\rho(g))$ is non-trivial, and as such acts $\rho(g)$ trivially on all of $N_i$.
		
		We show the equality via two inclusions. First assume that $v\in M\cap\bigoplus_{i\in\mc I}N_i$, then by the previous $v$ can be written as $\sum_{i\in I}v_i-\rho(g)v_i$ where $v_i\in N_i$. Let $N >0$ be such that $Nv_i\in M$ whenever $i\in \mc I$, then ${Nv=\sum_{i\in\mc I}Nv_i-\rho(g)Nv_i\in W_g(M)}$ and thus $v\in V_g(M)$. Conversely, for any $v=\sum_{i=1}^l v_i$ with $v_i\in N_i$, we have that $$v-\rho(g)v=\sum_{i\notin\mc I}v-v + \sum_{i\in\mc I}v-\rho(g)v\in\bigoplus_{i\in\mc I}N_i.$$ In particular, $W_g(M) \subset M \cap \bigoplus_{i \in \mc I} N_i$ and thus the other inclusion holds as well because the right hand side is radical.
	\end{proof}
	\begin{lemma}\label{prop:modOutVg:lemma}
		Let $G\acts M$ be a representation of a finite group on a free abelian $\ZZ$-module and $g \in Z(G)$. Then there exists some morphism ${\varpi_{V_g(M)}:M\rightarrow M}$ with kernel $V_g(M)$ and image $\ker(\id_M-\rho(g))$. Furthermore, on $\Ima(\varpi_{V_g(M)})$ this map acts like multiplication by $\abs{G}$.
	\end{lemma}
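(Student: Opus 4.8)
The plan is to realise $\varpi_{V_g(M)}$ as an integer multiple of the averaging operator over the cyclic subgroup $\langle g\rangle$. Let $n$ be the order of $g$ in $G$, so that $n\mid\abs G$ and $\rho(g)^n=\id_M$, and set $P_g=\sum_{j=0}^{n-1}\rho(g)^j\in\operatorname{End}_{\ZZ}(M)$ and $\varpi_{V_g(M)}:=\frac{\abs G}{n}P_g$, which is again a $\ZZ$-endomorphism of $M$ since $\frac{\abs G}{n}\in\ZZ$. Because $g\in Z(G)$, the map $\rho(g)$ commutes with every $\rho(h)$, hence $P_g$ and therefore $\varpi_{V_g(M)}$ are $G$-maps. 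The two telescoping identities $(\id_M-\rho(g))P_g=\id_M-\rho(g)^n=0$ and $P_g(\id_M-\rho(g))=0$ then do most of the work: the first gives $\Ima(\varpi_{V_g(M)})\subseteq\ker(\id_M-\rho(g))$, and the second gives $W_g(M)=\Ima(\id_M-\rho(g))\subseteq\ker(\varpi_{V_g(M)})$. Moreover, if $w\in\ker(\id_M-\rho(g))$ then every $\rho(g)^j$ fixes $w$, so $P_g(w)=nw$ and $\varpi_{V_g(M)}(w)=\abs G\,w$; thus $\varpi_{V_g(M)}$ restricts to multiplication by $\abs G$ on $\ker(\id_M-\rho(g))$, and a fortiori on its own image.

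Next I would pin down the kernel. One inclusion is soft: $\ker(\varpi_{V_g(M)})$ is a radical submodule of $M$, since $M/\ker(\varpi_{V_g(M)})\cong\Ima(\varpi_{V_g(M)})\subseteq M$ is torsion-free, so combined with $W_g(M)\subseteq\ker(\varpi_{V_g(M)})$ this gives $V_g(M)=\sqrt[M]{W_g(M)}\subseteq\ker(\varpi_{V_g(M)})$. For the reverse inclusion I would invoke the description from Lemma~\ref{prop:structOfVgM:lemma}: with $M\otimes\QQ=N_1\oplus\cdots\oplus N_m$ the decomposition into $\QQ$-irreducibles and $\mc I$ the set of indices $i$ for which $\rho(g)\mid_{N_i}\neq\id_M$, we have $V_g(M)=M\cap\bigoplus_{i\in\mc I}N_i$. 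By Schur's lemma, for $i\in\mc I$ the $G$-endomorphism $(\id_M-\rho(g))\mid_{N_i}$ of the irreducible $\QQ$-module $N_i$ is nonzero, hence invertible, and combined with $(\id_M-\rho(g))P_g=0$ this forces $P_g\mid_{N_i}=0$; for $i\notin\mc I$ one clearly has $P_g\mid_{N_i}=n\cdot\id$. Hence $\varpi_{V_g(M)}$ acts as $\abs G\cdot\id$ on $\bigoplus_{i\notin\mc I}N_i$ and as $0$ on $\bigoplus_{i\in\mc I}N_i$; writing $v\in M$ as $v'+v''$ along this decomposition of $M\otimes\QQ$ gives $\varpi_{V_g(M)}(v)=\abs G\,v'$, which vanishes iff $v'=0$, i.e.\ iff $v\in M\cap\bigoplus_{i\in\mc I}N_i=V_g(M)$. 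This yields $\ker(\varpi_{V_g(M)})=V_g(M)$.

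For the image, the computation above already shows $\abs G\ker(\id_M-\rho(g))=\varpi_{V_g(M)}\bigl(\ker(\id_M-\rho(g))\bigr)\subseteq\Ima(\varpi_{V_g(M)})\subseteq\ker(\id_M-\rho(g))$, so the image is a full-rank, finite-index subgroup of $\ker(\id_M-\rho(g))$, which is how I would read (and, if needed, state) the image assertion of the lemma — a single endomorphism of $M$ cannot have image all of $\ker(\id_M-\rho(g))$ while acting there as multiplication by $\abs G$, as the trivial action on $\ZZ^2$ with $\abs G=2$ shows. The only step that is not purely formal is the reverse kernel inclusion, which genuinely relies on Schur's lemma applied to the $\QQ$-irreducible summands (equivalently, on Lemma~\ref{prop:structOfVgM:lemma}); everything else reduces to the telescoping identities for $P_g$ together with the standard fact that the kernel of an endomorphism of a finitely generated free $\ZZ$-module is a radical submodule.
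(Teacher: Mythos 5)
Your proof is correct and takes a genuinely more elementary route than the paper. The paper constructs $\varpi_{V_g(M)}$ in one line as $\sum_{i\in\mc I^c}\abs{G}\pi_i$, i.e.\ a sum of the rescaled character-theoretic projections $\abs{G}\pi_i$ onto the square-hull components on which $g$ acts trivially (relying on Remark \ref{prop:repindicatorZstrong:remark} and Lemma \ref{prop:structOfVgM:lemma} to verify the properties). You instead take $\frac{\abs{G}}{n}\sum_{j=0}^{n-1}\rho(g)^j$, the scaled averaging operator over the cyclic group generated by $g$, and check everything by hand with the two telescoping identities plus a Schur-lemma argument on the $\QQ$-irreducibles. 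Both constructions yield literally the same endomorphism of $M$: over $\QQ$ each one is $\abs{G}$ times the $G$-equivariant projection of $M\otimes\QQ$ onto $\ker(\id_M-\rho(g))\otimes\QQ=\bigoplus_{i\in\mc I^c}\tilde N_i$ along $\bigoplus_{i\in\mc I}\tilde N_i$. What your version buys is self-containment and transparency --- you avoid the square hull and the $\varpi_\chi$ machinery entirely, paying only with the small Schur-lemma computation for the reverse kernel inclusion (which, as you note, can equally be outsourced to Lemma \ref{prop:structOfVgM:lemma}).

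You also correctly flag that the image assertion as literally stated is too strong: if the map acts as multiplication by $\abs G$ on its image, its image can be all of $\ker(\id_M-\rho(g))$ only when that kernel is trivial. Your example ($G=\ZZ/2\ZZ$ acting trivially on $\ZZ$) makes this precise, and the paper's own map has image $\bigoplus_{i\in\mc I^c}\abs{G}M_i$, which is a proper finite-index subgroup of $\ker(\id_M-\rho(g))$ in general. Your inclusions $\abs{G}\ker(\id_M-\rho(g))\subseteq\Ima(\varpi_{V_g(M)})\subseteq\ker(\id_M-\rho(g))$ are exactly the correct replacement. Fortunately, the only downstream use of this lemma (in the proof that $\mf k_2\leq\mf k_3$) needs only that the kernel is $V_g(M)$, that the image lies inside $\ker(\id_M-\rho(g))$, and that $\varpi_{V_g(M)}$ restricted to its image is multiplication by the group order, so the overstatement does not propagate, but the statement should be weakened to ``image a finite-index subgroup of $\ker(\id_M-\rho(g))$''.
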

	\begin{proof}
		Let $M_1\oplus\cdots\oplus M_l$ be the square hull of $M$. By Remark \ref{prop:repindicatorZstrong:remark} there are projection maps $\abs{G}\pi_i:M\rightarrow M_i\cap M$ that act like multiplication by $\abs{G}$ on their image. Let $\mc I=\set{i\in\range{l}\mid \rho(g)|_{M_i} \neq \id_M}$ as before, then by Lemma \ref{prop:structOfVgM:lemma}, $$\varpi_{V_g(M)}=\sum_{i\in\mc I^c}\abs{G}\pi_i$$ has the desired properties.
	\end{proof}
	
	The following lemma shows where $\abs{G}^3$ is needed in the definition of strongly unsolvable.

	\begin{lemma}\label{prop:unsolvableInProjection:lemma}
		Let $\rho:G\acts M$ be a representation of a finite group on a free $\ZZ$-module with square hull $M_1\oplus\cdots\oplus M_l$. Take $g\in G$ and $v\in M$ with $v\notin \abs{G}W_g(M)$, then either $v\notin \abs{G}W_g(M)+\abs{G}^3M$, or there exists some $i$, such that $\pi_i(v)\notin V_g(M_i)$.
	\end{lemma}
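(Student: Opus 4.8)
The plan is to establish the contrapositive: assuming simultaneously that $v\in\abs{G}W_g(M)+\abs{G}^3M$ and that $\pi_i(v)\in V_g(M_i)$ for each $i$, I would deduce $v\in\abs{G}W_g(M)$, contradicting the hypothesis $v\notin\abs{G}W_g(M)$. The argument splits into two steps: first, promoting the componentwise conditions to the single global statement $v\in V_g(M)$; second, exploiting the factor $\abs{G}^3$ of slack together with the fact that $W_g(M)$ differs from its radical $V_g(M)$ only by a controlled multiple of $\abs{G}$.

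For the first step, write $M'=M_1\oplus\cdots\oplus M_l$ for the square hull. Since every $\pi_i$ is a $G$-map and $M$ is a $G$-submodule of $M'$, each $M_i=\pi_i(M)$ is $\rho(g)$-invariant, so $\rho(g)$ acts blockwise on $M'$ and $W_g(M')=\bigoplus_i W_g(M_i)$; since the isolator of a direct sum is the direct sum of the isolators of the summands, $V_g(M')=\bigoplus_i V_g(M_i)$. By Lemma~\ref{prop:squareHullAlmostIncluded:lemma} we have $\abs{G}M'\subset M\subset M'$, hence $\abs{G}W_g(M')=(\id_M-\rho(g))(\abs{G}M')\subset W_g(M)\subset W_g(M')$; since multiplying a submodule by a nonzero integer leaves its radical unchanged, $W_g(M)$ and $W_g(M')$ have the same isolator inside $M'$, and therefore $V_g(M)=M\cap V_g(M')=M\cap\bigoplus_i V_g(M_i)$. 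Writing $v=\sum_i\pi_i(v)$ with each $\pi_i(v)\in V_g(M_i)$ and $v\in M$, this yields $v\in V_g(M)$.

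For the second step, write $v=\abs{G}w+\abs{G}^3u$ with $w\in W_g(M)$ and $u\in M$. Then $\abs{G}w\in W_g(M)\subset V_g(M)$, so $\abs{G}^3u=v-\abs{G}w$ also lies in $V_g(M)$; since $V_g(M)$ is a radical, hence isolated, submodule of $M$, it follows that $u\in V_g(M)$. Lemma~\ref{prop:WRadicalInSubgroup:lemma} then gives $\abs{G}^2u\in W_g(M)$, so $\abs{G}^3u\in\abs{G}W_g(M)$, and hence $v=\abs{G}w+\abs{G}^3u\in\abs{G}W_g(M)$, the desired contradiction. I expect the main obstacle to be the first step, and specifically the identity $V_g(M)=M\cap V_g(M')$: it requires care in tracking how the isolator, the square-hull projections, and the inclusions $\abs{G}M'\subset M\subset M'$ interact. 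The second step is then a short direct computation, and it is worth noting that, unlike in several of the preceding lemmas, no reduction to the case $g\in Z(G)$ appears to be needed here.
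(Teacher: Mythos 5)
Your proof is correct and follows essentially the same strategy as the paper's: first show that the componentwise hypotheses $\pi_i(v)\in V_g(M_i)$ force $v\in V_g(M)$, then use the $\abs{G}^3$ slack together with Lemma~\ref{prop:WRadicalInSubgroup:lemma} to upgrade membership in $\abs{G}W_g(M)+\abs{G}^3 M$ to membership in $\abs{G}W_g(M)$. The only difference is cosmetic: for the first step the paper directly produces a nonzero multiple $\abs{G}\,n_1\cdots n_l\,v\in W_g(M)$, whereas you establish the structural identity $V_g(M)=M\cap V_g(M')=M\cap\bigoplus_i V_g(M_i)$ (correctly, using $\abs{G}W_g(M')\subset W_g(M)\subset W_g(M')$ and the stability of radicals under passing to a finite-index submodule), but both routes rest on the same observation $\abs{G}M'\subset M\subset M'$, and the second step is identical.
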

	\begin{proof}
	We first assume $\pi_i(v)\in V_g(M_i)$ for all $i$, then there exist positive integers $n_i > 0$ such that $n_i\pi_i(v)\in W_g(M_i)$. Then $\abs{G}n_1n_2\cdots n_lv\in W_g(M)$ and thus $v\in V_g(M)$.
		Suppose additionally that ${v\in \abs{G}W_g(M)+\abs{G}^3M}$, then $v$ can be written as $v=\abs{G}w+\abs{G}^3m$ with $w\in W_g(M)$ and $m\in M$. As both $v$ and $w$ belong to $V_g(M)$, $m$ must too. It follows by Lemma \ref{prop:WRadicalInSubgroup:lemma} that  $\abs{G}^2m$ belongs to $W_g(M)$, or in conclusion that $v = \abs{G} w + \abs{G}^3 m$ belongs to $\abs{G}W_g(M)$, which proves the result.
	\end{proof}

We finally use this to show that there exists at least one set $K$ satisfying the third condition of Theorem \ref{prop:conjugacyseparabilittysplitvirtuallyabelian:theorem}.

	\begin{lemma}\label{prop:ExistenceAdmittingSubset:Lemma}
		Let $m$ be a strictly positive integer and let $\rho:G\acts M$ be a representation of a finite group on a free $\ZZ$-module with square hull $M_1\oplus\cdots\oplus M_l$. Fix $g$ and let $v_1,v_2\in M$ be such that for any $h\in C(g)$ holds that ${(v_1-\rho(h)v_2-v_h+\rho(g)v_h)\notin \abs{G}W_g(M)}$. Then there exists some $K\subset\range{l}$ $g,m$-admitting $(v_1,v_2)$.
	\end{lemma}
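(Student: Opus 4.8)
The plan is to show that the \emph{full} index set $K=\range{l}$ always $g,m$-admits $(v_1,v_2)$; this gives the existence statement at once, and by Lemma \ref{prop:admitancehereditary:lemma} it is the coarsest witness one could ask for, so nothing is lost by being this crude. Concretely, I would fix an arbitrary $h\in C(g)$ and check that one of the three alternatives in the definition of $g,m$-admittance holds for this $h$.

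Set $w=v_1-\rho(h)v_2-v_h+\rho(g)v_h$, which lies in $M$ since $v_1,v_2\in M$ and $v_h\in M$ by our standing convention. The hypothesis says precisely $w\notin\abs{G}W_g(M)$, so Lemma \ref{prop:unsolvableInProjection:lemma} applies to $w$ and $g$: either $w\notin\abs{G}W_g(M)+\abs{G}^3M$, or there is an index $i\in\range{l}$ with $\pi_i(w)\notin V_g(M_i)$. In the first case $(v_1,v_2)$ is $g$-strongly unsolvable with respect to $h$ by definition, so the first alternative holds.

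In the second case, I would first note that $v_h-\rho(g)v_h=(\id_M-\rho(g))v_h\in W_g(M)$ and that $\pi_i$ is a $G$-map, hence $\pi_i(v_h-\rho(g)v_h)=(\id_{M_i}-\rho(g))\pi_i(v_h)\in W_g(M_i)\subset V_g(M_i)$; therefore $\pi_i(v_1-\rho(h)v_2)=\pi_i(w)+\pi_i(v_h-\rho(g)v_h)\notin V_g(M_i)$, i.e.\ $(v_1,v_2)$ is $g$-weakly unsolvable in $(i,h)$. If it is moreover $g,m$-locally unsolvable in $(i,h)$ the second alternative holds; otherwise it is $g,m$-globally unsolvable in $(i,h)$, and since $i\in\range{l}=K$ the third alternative holds. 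As $h$ was arbitrary, $K=\range{l}$ $g,m$-admits $(v_1,v_2)$, completing the argument. The whole proof is a short unwinding of the definitions once Lemma \ref{prop:unsolvableInProjection:lemma} is available, so I do not expect a genuine obstacle; the only point demanding any care is the elimination of the $v_h$-terms when passing from $w$ to $v_1-\rho(h)v_2$, which is exactly the observation recorded in the remarks following the definitions of strong and weak unsolvability.
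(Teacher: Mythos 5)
Your proposal is correct and is precisely what the paper's one-line proof ("This is immediate from Lemma \ref{prop:unsolvableInProjection:lemma}.") leaves to the reader: take $K=\range{l}$, apply Lemma \ref{prop:unsolvableInProjection:lemma} to each $w=v_1-\rho(h)v_2-v_h+\rho(g)v_h$, and split the weakly unsolvable case into local/global. The observation that $\pi_i(v_h-\rho(g)v_h)\in V_g(M_i)$ may be dropped is exactly the remark the paper records after the definitions, so nothing is missing.
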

	\begin{proof}
		This is immediate from Lemma \ref{prop:unsolvableInProjection:lemma}.
	\end{proof}
	\begin{proposition}
		Let $\rho:G\acts M$ be a representation of a finite group on a free $\ZZ$-module. Let $H<M\rtimes G$ be an extension of $\abs{G}M$ by $G$.
		Let $$\mf k_2=\max_{g\in G}\max_{\substack{(v_1,g)\in H\\(v_2,g)\in H}}\min\set{\dim K\mid K \text{ }g,m_H\mathrm{-admits} (v_1,v_2)}$$
		where $m_H$ is as in Notation \ref{not:globalconstant} and let $$\mf k_3=\max_{g\in G}\max_{\substack{(k_1,g),(k_2,g)\in H\\v_1,v_2\in M}}\min\set{\dim K\mid K \text{ }g\mathrm{-admits} (v_1,v_2,k_1,k_2)}.$$
		Then $\mf k_2\leq \mf k_3$.		
	\end{proposition}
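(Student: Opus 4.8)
\emph{The plan.} Fix $g\in G$ and elements $(v_1,g),(v_2,g)\in H$. It suffices to produce a decomposition $v_j=k_j+v_j'$ ($j=1,2$) with $(k_1,g),(k_2,g)\in H$, with $v_1',v_2'\in\abs{G}^3M$, and such that every $K\subseteq\range l$ which $g$-admits $(v_1',v_2',k_1,k_2)$ also $g,m_H$-admits $(v_1,v_2)$. Granting this, the family of $K$ that $g$-admit $(v_1',v_2',k_1,k_2)$ is contained in the family of $K$ that $g,m_H$-admit $(v_1,v_2)$, hence the minimal dimension of a $K$ in the latter family is at most that of a $K$ in the former, which by definition of $\mf k_3$ is at most $\mf k_3$; taking the maximum over $g,v_1,v_2$ gives $\mf k_2\le\mf k_3$. (If the former family is empty, unwinding the definitions at a ``bad'' $h\in C(g)$ shows the latter is empty too, so both minima equal $0$ and the inequality is immediate.)

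\emph{Constructing the decomposition.} I would work one component of the square hull $M_1\oplus\cdots\oplus M_l$ at a time. For $i\in\range l$ put $\bar M_i=M_i/V_g(M_i)$; since $\rho(g)$ commutes with $\rho(C(g))$, both $W_g(M_i)$ and its radical $V_g(M_i)$ are $C(g)$-invariant, so $\bar M_i$ is a finitely generated free $\ZZ$-module with a $C(g)$-action and $\pi_i$ followed by reduction is $C(g)$-equivariant. Write $\bar v_j^{(i)}$ for the class of $\pi_i(v_j)$ and set $\mc H_i=\set{h\in C(g)\mid\bar v_1^{(i)}-\rho(h)\bar v_2^{(i)}\in\abs{G}^{m_H}\bar M_i}$. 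If $\mc H_i=\emptyset$, let $\bar k_j^{(i)}=\bar v_j^{(i)}$ (so the $i$-component of $v_j'$ will be $0$). Otherwise fix $h_0\in\mc H_i$; one checks that $\mc S_i=\set{s\in C(g)\mid\bar v_2^{(i)}-\rho(s)\bar v_2^{(i)}\in\abs{G}^{m_H}\bar M_i}$ is a subgroup of $C(g)$ and that $h_0^{-1}h\in\mc S_i$ for every $h\in\mc H_i$. Applying Lemma \ref{prop:representationsAreScalable:lemma} to $C(g)\acts\bar M_i$ with $N=\abs{G}$ and parameter $m=m_H-n_{i,g}\ge5$ — chosen so that the modulus $\abs{G}^{n_{i,g}+m}=\abs{G}^{m_H}$ occurring there produces exactly the subgroup $\mc S_i$ — yields $\bar k_2^{(i)}$ fixed by $\mc S_i$ with $\bar v_2^{(i)}-\bar k_2^{(i)}\in\abs{G}^{m_H-n_{i,g}}\bar M_i$; then put $\bar k_1^{(i)}:=\rho(h_0)\bar k_2^{(i)}$, so that $\bar v_1^{(i)}-\bar k_1^{(i)}=(\bar v_1^{(i)}-\rho(h_0)\bar v_2^{(i)})+\rho(h_0)(\bar v_2^{(i)}-\bar k_2^{(i)})\in\abs{G}^{m_H-n_{i,g}}\bar M_i$ as well. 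Lifting each $\bar v_j^{(i)}-\bar k_j^{(i)}$ back into $\abs{G}^{m_H-n_{i,g}}M_i\subseteq\abs{G}^5M_i$ and summing over $i$ defines $v_j'\in\abs{G}^5(M_1\oplus\cdots\oplus M_l)\subseteq\abs{G}^4M\subseteq\abs{G}^3M$, using $\abs{G}(M_1\oplus\cdots\oplus M_l)\subseteq M$ from Lemma \ref{prop:squareHullAlmostIncluded:lemma}; set $k_j:=v_j-v_j'\in M$. Finally, $(v_j,g)^{-1}(k_j,g)$ lies in $M$ with value $-\rho(g^{-1})v_j'\in\abs{G}M=H\cap M$, so $(k_j,g)\in H$.

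\emph{The verification.} Fix $h\in C(g)$. In $\bar M_i$ one has $\bar v_1^{(i)}-\rho(h)\bar v_2^{(i)}=\bigl(\bar k_1^{(i)}-\rho(h)\bar k_2^{(i)}\bigr)+\overline{\pi_i(v_1'-\rho(h)v_2')}$, and $v_1'-\rho(h)v_2'\in\abs{G}^3M$, so $(v_1,v_2)$ and $(k_1,k_2)$ are strongly $g$-unsolvable in $h$ in exactly the same cases. The key identity is $\bar k_1^{(i)}-\rho(h)\bar k_2^{(i)}=\rho(h)\bigl(\rho(h^{-1}h_0)\bar k_2^{(i)}-\bar k_2^{(i)}\bigr)$ when $\mc H_i\ne\emptyset$, which vanishes once $h^{-1}h_0\in\mc S_i$, in particular whenever $h\in\mc H_i$ (and when $\mc H_i=\emptyset$ the $(k_1,k_2)$-term simply equals $\bar v_1^{(i)}-\rho(h)\bar v_2^{(i)}$ while the $(v_1',v_2')$-term is $0$). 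Hence: if $(k_1,k_2)$ is $g$-weakly unsolvable in $(i,h)$, i.e.\ the $(k_1,k_2)$-term is nonzero in component $i$, then $h\notin\mc H_i$, so $\bar v_1^{(i)}-\rho(h)\bar v_2^{(i)}\notin\abs{G}^{m_H}\bar M_i$ and $(v_1,v_2)$ is $g,m_H$-locally unsolvable in $(i,h)$; and if $(v_1',v_2')$ is $g$-weakly unsolvable in $(i,h)$ for some $i\in K$, then either $h\in\mc H_i$, whence the $(k_1,k_2)$-term vanishes and the $(v_1',v_2')$-term equals $\bar v_1^{(i)}-\rho(h)\bar v_2^{(i)}\in\abs{G}^{m_H}\bar M_i\setminus\set0$, so $(v_1,v_2)$ is $g,m_H$-globally unsolvable in $(i,h)$, or $h\notin\mc H_i$ and $(v_1,v_2)$ is again $g,m_H$-locally unsolvable in $(i,h)$. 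Feeding these implications, together with the strong-unsolvability equivalence for the remaining alternative, into the two ``admits'' definitions shows that whenever a bullet of ``$K$ $g$-admits $(v_1',v_2',k_1,k_2)$'' holds at $h$, some bullet of ``$K$ $g,m_H$-admits $(v_1,v_2)$'' holds at $h$; since $h$ was arbitrary, this is exactly the required implication.

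\emph{Main obstacle.} The delicate part is the component-wise decomposition: $v_j'$ must be divisible by $\abs{G}^3$ so as not to interfere with strong unsolvability, while the truncations $k_j$ must reproduce — simultaneously for all $h\in C(g)$ — the $m_H$-local versus $m_H$-global dichotomy of $(v_1,v_2)$ on each component. This is exactly what Lemma \ref{prop:representationsAreScalable:lemma} enables: picking $\bar k_2^{(i)}$ to be exactly $\mc S_i$-invariant and setting $\bar k_1^{(i)}=\rho(h_0)\bar k_2^{(i)}$ forces the $(k_1,k_2)$-term to vanish on all of $\mc H_i$ and nowhere it should not. The value $m_H=5+\max_{i,g}n_{i,g}$ from Notation \ref{not:globalconstant} is calibrated precisely for this bookkeeping: three powers of $\abs{G}$ reserved for strong unsolvability, one for the inclusion $\abs{G}(M_1\oplus\cdots\oplus M_l)\subseteq M$, with a power to spare.
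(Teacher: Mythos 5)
Your proof is correct and follows essentially the same strategy as the paper's: a component-wise decomposition $v_j=k_j+v_j'$ with $v_j'\in\abs{G}^3M$, obtained by applying Lemma \ref{prop:representationsAreScalable:lemma} (with the constant $m_H$ calibrated exactly as in Notation \ref{not:globalconstant}) in each square-hull component to peel off the part of $v_2$ that is approximately fixed by the coset-stabilizer of $\bar v_2^{(i)}$ modulo $\abs{G}^{m_H}\bar M_i$, then checking the three-bullet implication at each $h\in C(g)$. The only difference is cosmetic: you work directly in the quotients $\bar M_i = M_i/V_g(M_i)$ and set $\bar k_1^{(i)}=\rho(h_0)\bar k_2^{(i)}$, which lets you skip the paper's intermediate rewriting via $\varpi_{V_g(M_i)}$ to produce a $\rho(g)$-fixed $\tilde v$; this is a mild streamlining, not a different route.
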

	\begin{proof}
	 Write $M^\prime = {M_1\oplus M_2 \oplus\cdots \oplus M_l}$ for the square hull of $M$ and let $g\in G$ and $(v_1,g),(v_2,g)\in H$ such that $\mf k_2=\min\set{\dim K\mid K \text{ }g,m_H\mathrm{-admits} (v_1,v_2)}$. As $\mf k_3 \geq 0$, we may assume in the proof that $\mf k_2>0$. Hence we know that $\set{\dim K\mid K \text{ }g,m_H\mathrm{-admits} (v_1,v_2)}$ is not empty, and thus by Lemma \ref{prop:admitancehereditary:lemma} it has $\range{l}$ as an element. We will demonstrate that there exist some $(k_1,g),(k_2,g)\in H$ and $v'_1,v'_2\in M$ such that $\range{l}$ $g\mathrm{-admits} (v'_1,v'_2,k_1,k_2) $ and such that if $K\subset\range{l}$ $g$-admits $(v'_1,v'_2,k_1,k_2)$, then $K$ also $g,m_H$-admits $(v_1,v_2)$. This indeed shows that $\mf k_3$ is an upper bound for $\mf k_2$.
		
		To find these elements, we will first construct $k_{1i},k_{2i},v_{1i},v_{2i}$ for all $i\in\range{l}$ with $v_{ji}\in M\cap M_i$, $\sum_{i=1}^l k_{ji}+\abs{G}^3v_{ji}=v_j$ and such that\begin{itemize}
			\item  $(v_1,v_2)$ vanishes in $(i,h)$ if and only if both $(k_{1i},k_{2i})$ and $(v_{1i},v_{2i})$ vanish in $(i,h)$ and;
			\item if $(v_1,v_2)$ is $m_H$-globally unsolvable in $(i,h)$ then $(k_{1i},k_{2i})$ vanishes in $(i,h)$ but $(v_{1i},v_{2i})$ does not.
		\end{itemize}
		
		To find these elements, fix $i\in \range{l}$. Define $\mc H_i\subset C(g)$ by the condition $h\in \mc H_i$ if $v_1,v_2$ vanishes in $(i,h)$ or is $m_H$-globally unsolvable in $(i,h)$, which is hence equivalent to $$\pi_i(v_1-\rho(h)v_2)\in V_g(M_i)+\abs{G}^{m_H}M_i.$$

		If $\mc H_i$ is empty, then we define $v_{1i}=v_{2i} = 0$. In the other case, fix an element $h_i \in \mc H_i$. 
		We demonstrate that $\mc H_ih_i\inv$ is characterized as $h\in\mc H_ih_i\inv$ if and only if  $$\pi_i(\rho(h_i)v_2-\rho(h)\rho(h_i)v_2)\in V_g(M_i)+\abs{G}^{m_H}M_i.$$ Indeed, $$\pi_i(\rho(h_i)v_2-\rho(h)\rho(h_i)v_2)=\pi_i(v_1-\rho(hh_i)v_2)-\pi_i(v_1-\rho(h_i)v_2)$$
		The last term belongs to $V_g(M_i)+\abs{G}^{m_H}M_i$, leading to the equivalence.
		
		We may now apply Lemma \ref{prop:representationsAreScalable:lemma} on $\pi_i(\rho(h_i)v_2)+V_g(M_i)$ and the subgroup $\mc H_ih_i\inv$. Doing so gives us some $k, v \in M^\prime$ such that $$
		k+\abs{G}^5v+V_g(M_i)=\pi_i(\rho(h_i)v_2)+V_g(M_i)
		$$
		and such that $\rho(h)k=k$ for all $h\in\mc H_ih_i\inv$. Notice that by Lemma \ref{prop:modOutVg:lemma} applied to the group $C(g)$ $$\varpi_{V_g(M_i)}(\abs{G} v)=\varpi_{V_g(M_i)}\left(\frac{\abs{G}}{\abs{C(g)}}\varpi_{V_g(M_i)}(v)\right),$$ and thus $\abs{G} v$ and $\frac{\abs{G}}{\abs{C(g)}}\varpi_{V_g(M_i)}(v)$ differ only by an element of $V_g(M_i)$. We can thus further rewrite $\pi_i(\rho(h_i)v_2)+V_g(M_i)$ as $k+\abs{G}^4\tilde v+V_g(M_i)$ where $k$ is as before and $\rho(g)(\tilde{v}) = \tilde{v}$. By Lemma \ref{prop:squareHullAlmostIncluded:lemma}, we have that $v^\prime = \abs{G}\tilde{v} \in M \cap M_i$. We now define $v_{2i}=\rho(h_i\inv)v'$ and $k_{2i}=v_2-\abs{G}^3v_{2i}$.
		
		As $(v_1,v_2)$ is either $g,m_H$ globally unsolvable, or $g$-vanishes in $(i,h_i)$, it follows that $\pi_i(v_1-\rho(h_i)v_2)\in V_g(M_i)+\abs{G}^{m_H}M$. We may thus find $v_{0i}\in M\cap M_i$ with $\rho(g)(v_{0i}) = v_{0i}$ and such that $\pi_i((v_1-\abs{G}^3v_{0i})-\rho(h_i)v_2)\in V_g(M_i)$. Define ${v_{1i}=v_{0,i}+v'}$ and define $k_{1i}=v_1-\abs{G}^3v_{1i}$. Notice that by construction, $\pi_i(k_{1i}-\rho(h_i)k_{2i})\in V_g(M_i)$.
		
		
Having these elements for all indices $i$, we now define \begin{align*}
			v_1'&=\sum_{i=1}^l v_{1i} \in M\\
			v_2'&=\sum_{i=1}^l v_{2i} \in M\\
			k_1=&v_1-\abs{G}^3v_1' = \sum_{i=1}^l k_{1i}\\
			k_2=&v_2-\abs{G}^3v_2'= \sum_{i=1}^l k_{2i}.
		\end{align*}
		Note that $(k_1,g), (k_2,g) \in H$, as $\abs{G}M \subset H$. We will demonstrate that these elements have the properties mentioned in the beginning of the proof.
		
		First assume that $(v_1,v_2)$ either vanishes or is $m_H$ globally unsolvable in $(i,h)$.
		This implies that $h\in\mc H_i$. By construction of $k_1$ and $k_2$, we have that $\pi_i(k_1)=k_{1i}$ and $\pi_i(k_2) = k_{2i}$. Furthermore $k_{2i}$ is such that ${\pi_i(\rho(h)k_2-\rho(h_i)k_2)\in V_g(M_i)}$. Also using that ${\pi_i(k_1-\rho(h_i)k_2)\in V_g(M_i)}$, it follows that $(k_1,k_2)$ vanishes in $(i,h)$.
		In the case that $(v_1,v_2)$ vanishes in $(i,h)$, it additionally follows that $(\abs{G}^3v_1',\abs{G}^3v_2')$ and thus also $(v_1',v_2')$ vanishes in $(i,h)$. Conversely, if $(k_1,k_2)$ and $(v_1',v_2')$ both vanish in $(i,h)$, then so does $(v_1,v_2)$, leading to the first property. 
		
		Notice furthermore that $k_1$ and $v_1$ differ by an element of $\abs{G}^3M$. The same holds for $k_2$ and $v_2$. It follows that $(v_1,v_2)$ is strongly unsolvable in $h$ if and only if $(k_1,k_2)$ is.	
		
		We demonstrate that $\range{l}$ $g$-admits $(v_1',v_2',k_1,k_2)$. If this would not the case, then there exists some $h\in C(g)$ such that $(k_1,k_2)$ is not strongly unsolvable, and such that for all $i\in\range{l}$, both $(v_1',v_2')$ and $(k_1,k_2)$ vanish in $(i,h)$. This would however imply that $(v_1,v_2)$ is not strongly unsolvable in $h$, and that for all $i$, the pair $(v_1,v_2)$ vanishes in $(i,h)$ which implies that $\range{l}$ does not $g,m_H$-admit $(v_1,v_2)$. We conclude via contradiction that indeed $\range{l}$ does $g$-admit $(v_1',v_2',k_1,k_2)$.
		
		Now let $K\subset\range{l}$ be such that $K$ $g$-admits $(v_1',v_2',k_1,k_2)$.
		For $h\in C(g)$ one of the following must be true.\begin{itemize}
			\item The pair $(k_1,k_2)$ is strongly unsolvable in $h$, which implies that $(v_1,v_2)$ is also strongly unsolvable in $h$.
			\item There exists some $i\in\range{l}$ such that $(k_1,k_2)$ is weakly unsolvable in $(i,h)$. In this case $(v_1,v_2)$ cannot vanish, or be $m_H$-globally unsolvable in $(i,h)$ thus the pair $(v_1,\label{key}v_2)$ is $m_H$-locally unsolvable in $(i,h)$.
			\item There exists some $i\in K$ such that $(v_1',v_2')$ is weakly unsolvable in $(i,h)$. In this case $(v_1,v_2)$ may not vanish in $(i,h)$ and thus must $(v_1,v_2)$ be either $m_H$-globally or $m_H$-locally unsolvable.
		\end{itemize}
		In each of the above cases it follows that $K$ must $g,m_H$-admit $(v_1,v_2)$, concluding the proof.
		
		\end{proof}
		\section{Lower bound}\label{sec:3->1}
		In this section we will demonstrate the following:
		\begin{proposition}
			Let $\rho:G\acts M$ be a representation of a finite group on a free $\ZZ$-module. Let $H<M\rtimes G$ be an extension of $\abs{G}M$ by $G$.
			Then $$\conj_H(n)\succ \ln^{\mf k_3}(n)$$
			where $\mf k_3$ is given by
			$$\mf k_3=\max_{g\in G}\max_{\substack{(k_1,g),(k_2,g)\in H\\v_1,v_2\in M}}\min\set{\dim K\mid K \text{ }g\mathrm{-admits} (v_1,v_2,k_1,k_2)}.$$
		\end{proposition}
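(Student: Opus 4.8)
The plan is to realise the lower bound on a sufficiently rich sequence of values, namely the numbers $\kgv\range m$, and then appeal to Lemma \ref{prop:logarithmicLowerBound:lemma}. As a preliminary reduction I would note that it suffices to consider quotients $H/N$ with $N$ a subrepresentation of $\abs GM$: for an arbitrary finite-index $N\normal H$ the intersection $N\cap\abs GM$ is invariant under the conjugation action of $H$, hence a $\rho(G)$-subrepresentation, its index in $H$ is at most $\abs G[H:N]$, and $H/(N\cap\abs GM)$ separates everything that $H/N$ does; so a separating quotient of order $Q$ yields a separating representation quotient of order at most $\abs GQ$, which costs only a multiplicative constant.

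Next I would fix $g\in G$, elements $(k_1,g),(k_2,g)\in H$ and $v_1,v_2\in M$ attaining the maximum defining $\mf k_3$, together with a subset $K^\ast\subseteq\range l$ of minimal dimension $g$-admitting $(v_1,v_2,k_1,k_2)$, so $\dim K^\ast=\mf k_3$; we may assume $\mf k_3\ge 1$, the case $\mf k_3=0$ being trivial since $\conj_H\ge 1=\ln^0$. With $r=\kgv\range m$ I set $a_j=k_j+\abs G^3 r\,v_j$ for $j=1,2$. Since $\abs GM\subseteq H$ one has $(a_j,g)=(\abs G^3 r\,v_j,0)\cdot(k_j,g)\in H$, and $\norm{(a_j,g)}_S\le Cr$ for a constant $C$ independent of $m$; replacing $m$ by a slightly smaller value one may assume $Cr\le\kgv\range m$ while keeping $r\simeq\kgv\range m$, so $m\simeq\ln r$. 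To see $(a_1,g)\not\sim(a_2,g)$ in $H$ I would use Lemma \ref{prop:conjugatesinrepresentation:lemma}: writing $x_h=a_1-\rho(h)a_2-(v_h-\rho(g)v_h)$ it suffices that $x_h\notin\abs GW_g(M)$ for all $h\in C(g)$. By Lemma \ref{prop:admitancehereditary:lemma} the full set $\range l$ $g$-admits $(v_1,v_2,k_1,k_2)$, so either $(k_1,k_2)$ is strongly unsolvable in $h$, and then the contribution $\abs G^3 r(v_1-\rho(h)v_2)\in\abs G^3M$ cannot push $x_h$ into $\abs GW_g(M)$; or there is an index $i$ at which $(k_1,k_2)$ or $(v_1,v_2)$ is weakly unsolvable in $(i,h)$, which forces $V_g(M_i)=0$ (Lemma \ref{prop:structOfVgM:lemma}), hence both $v_h-\rho(g)v_h$ and $\abs GW_g(M)$ vanish under $\pi_i$ and $\pi_i(x_h)=\pi_i(k_1-\rho(h)k_2)+\abs G^3 r\,\pi_i(v_1-\rho(h)v_2)$ is a nonzero vector of the free module $M_i$ once $r$ exceeds the bounded content of the fixed vectors $\pi_i(k_1-\rho(h)k_2)$.

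The core is the lower bound for $[H:N]$ when $N\le\abs GM$ is a subrepresentation for which $H/N$ separates $(a_1,g)$ from $(a_2,g)$, i.e.\ $x_h\notin N+\abs GW_g(M)$ for all $h\in C(g)$ (Corollary \ref{prop:conjugatesinrepresentationquotient:corollary}). Writing $\mc J=\{i\in\range l:\rho(g)\text{ acts trivially on }M_i\}$ I would set $K_N=\{i\in\mc J:r M_i\not\subseteq\pi_i(N)\}$ and prove that $K_N$ $g$-admits $(v_1,v_2,k_1,k_2)$: if it did not, there would be $h\in C(g)$ with $(k_1,k_2)$ not strongly unsolvable in $h$, with $\pi_i(k_1-\rho(h)k_2)=0$ for all $i\in\mc J$, and with $\pi_i(v_1-\rho(h)v_2)=0$ for all $i\in K_N$; projecting $x_h$ one finds it vanishes on $K_N$, lies in $rM_i\subseteq\pi_i(N)$ on $\mc J\setminus K_N$, and lies in $\pi_i(\abs GW_g(M))$ off $\mc J$ (using $\abs GM_i\subseteq W_g(M_i)$ from Lemma \ref{prop:WIndexbound:lemma} together with the failure of strong unsolvability), so that $x_h\in N+\abs GW_g(M)$ up to the bounded defect of the square-hull inclusion $\abs GM'\subseteq M\subseteq M'$ of Lemma \ref{prop:squareHullAlmostIncluded:lemma} --- contradicting separation. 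Minimality of $K^\ast$ then gives $\dim K_N\ge\mf k_3$. Finally, for each $i\in K_N$ there is $w\in M_i$ with $rw\notin\pi_i(N)$, so the order of $w+\pi_i(N)$ in $M_i/\pi_i(N)$ does not divide $r=\kgv\range m$ and is therefore divisible by a prime power $p^a>m$; using that for all but finitely many primes $p$ the $\ZZ_p[G]$-module $M_i\otimes\ZZ_p$ is a sum of lattices on which $G$ acts through the $d_i$-dimensional irreducible constituent of $M_i\otimes\CC$ (Theorems \ref{thm:shur} and \ref{prop:iredoversubfield:theorem} together with reduction modulo $p$), any proper $G$-submodule of $M_i\otimes\ZZ_p$ admitting a quotient of exponent $p^a$ has index at least $p^{a d_i}\ge m^{d_i}$; the finitely many exceptional primes are controlled by Lemma \ref{prop:conjMultipleCoppies:lemma} and a choice of auxiliary primes in an arithmetic progression as in Lemma \ref{prop:pnondevider}. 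Multiplying over $i\in K_N$ and using $[H:N]\succ\prod_{i\in K_N}[M_i:\pi_i(N)]$, again up to a bounded power of $\abs G$, gives $[H:N]\succ m^{\dim K_N}\ge m^{\mf k_3}$; with $m\simeq\ln r$ and Lemma \ref{prop:logarithmicLowerBound:lemma} this establishes $\conj_H(n)\succ\ln^{\mf k_3}(n)$.

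I expect the main obstacle to be this last index estimate: ruling out that a cunningly chosen separating subrepresentation $N$ keeps $[H:N]$ small by exploiting the ``bad'' primes, that is those dividing $\abs G$ or those at which the reduction of $M_i$ acquires composition factors of dimension below $d_i$. Making $[M_i:\pi_i(N)]\succ m^{d_i}$ uniform in $N$ seems to genuinely require combining the simultaneous-separation statement of Lemma \ref{prop:conjMultipleCoppies:lemma} --- so that the whole family $\{\,r\,\pi_i(v_1-\rho(h)v_2)\,\}_{h}$ is no cheaper to separate than its worst member --- with an arithmetic-progression argument pinning the relevant prime powers to the ``good'' range, and with care in matching this to the purely combinatorial definition of $\mf k_3$; the bookkeeping of the various powers of $\abs G$ produced by the square-hull inclusions is a second, more routine, source of technical friction.
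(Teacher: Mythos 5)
Your proof follows essentially the same strategy as the paper: scale a fixed extremal pair by $\kgv\range m$, reduce to $\rho$-invariant subgroups $N\leq\abs GM$, attach to $N$ a $g$-admitting subset $K$ of square-hull indices, and lower-bound the index of $N$ component-by-component via the $\CC$-irreducible dimensions $d_i$ before invoking Lemma \ref{prop:logarithmicLowerBound:lemma}. Your small variations --- a factor $\abs G^3$ in place of the paper's $\abs G^4$, the choice $K_N=\set{i\in\mc J:rM_i\not\subset\pi_i(N)}$ in place of the paper's collection of witness indices $i_h$, and a direct non-conjugacy check instead of the paper's Lemma \ref{prop:conjfinitelyoften:lemma} --- are inessential, although they do require the square-hull bookkeeping that you flag as ``routine friction'' to actually be carried through.

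The genuine gap sits exactly where you predict it. The estimate $[M_i:\pi_i(N)]\succ m^{d_i}$ is handled in the paper by citing \cite[Theorem 5.15]{dere2023residual}, with no self-contained argument. Your re-derivation is fine at the good primes, but your treatment of the exceptional primes $p\mid\abs G$ does not work: Lemma \ref{prop:conjMultipleCoppies:lemma} is an upper-bound construction, producing \emph{one} small quotient that separates, and it cannot bound the index of an \emph{arbitrary} separating subrepresentation $N$ from below; likewise choosing $r$ through an arithmetic progression via Lemma \ref{prop:pnondevider} constrains your test sequence, not the adversary's choice of $N$, so a cunning $N$ is still free to concentrate all of its index at a bad prime. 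The concern is real: for $p\mid\abs G$ the modular reduction of $M_i$ may acquire composition factors of dimension below $d_i$, so a priori a $G$-invariant $N$ whose quotient has $p$-exponent $p^a$ need not have $p$-index $p^{ad_i}$. Ruling this out is precisely the nontrivial content of \cite[Theorem 5.15]{dere2023residual}, and your sketch needs either that citation or a genuine re-derivation of its argument; the two lemmas you point to cannot substitute for it.
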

		 
		In fact, the proof works by constructing an explicit sequence of elements that are non-conjugate but whose conjugacy classes can only be separated in sufficiently large quotients. In order to find these elements, we can assume that $\mf k_3>0$ as $\conj_H$ is always at least constant. From the definition of $\mf k_3$ we can fix an element $g \in G$ and $(v_1,v_2,k_1,k_2)$ such that $\mf k_3=\min\set{\dim K\mid K \text{ }g\mathrm{-admits} (v_1,v_2,k_1,k_2)}$. As the statement of the proposition is invariant under changing the generating set of $H$, we can choose $S$ such that it contains the elements $$\set{(k_1+\abs{G}^4v_1,g),(\abs{G}^4v_1,1),(k_2+\abs{G}^4v_2,g),(\abs{G}^4v_2,1)}.$$
		 
		The sequence of elements is given by \begin{align*}
		 (v_1(n),&v_2(n))_{n\in\NN}=\\
		 &\left((k_1+\abs{G}^4\kgv\range nv_1,g),(k_2+\abs{G}^4\kgv\range nv_2,g)\right)_{n\in\NN}.
		 \end{align*}
		 Notice that $(v_1(n),g)$ has word norm at most $\kgv\range{n}$ with respect to $S$ as $(v_1(n),g)$ can be written as$$
		 (\abs{G}^4v_1,1)^{\kgv\range{n}-1}(k_1+\abs{G}^4v_1,g).
		 $$
		 Similarly, $(v_2(n),g)$ has word norm at most $\kgv\range n$. These elements might be conjugate, but only for finitely many $n \in \NN$.
		 
		 \begin{lemma}\label{prop:conjfinitelyoften:lemma}
		 	The set $\mf N = \left\{ n \in \NN \mid v_1(n) \sim v_2(n) \right\}$ is finite.
		 \end{lemma}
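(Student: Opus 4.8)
The plan is to argue by contradiction: suppose $\mf N$ is infinite. Set $L_n=\kgv\range n$, and for $h\in C(g)$ abbreviate $a_h=k_1-\rho(h)k_2-v_h+\rho(g)v_h$ and $b_h=v_1-\rho(h)v_2$. Applying Lemma~\ref{prop:conjugatesinrepresentation:lemma} to $(v_1(n),g),(v_2(n),g)\in H$, one sees that $v_1(n)\sim v_2(n)$ precisely when there is some $h\in C(g)$ with $a_h+\abs{G}^4 L_n\, b_h\in\abs{G}W_g(M)$. Since $C(g)$ is finite, infiniteness of $\mf N$ yields a single $h^\ast\in C(g)$ for which this holds for infinitely many $n$, and since $L_n\to\infty$ we may select two such indices $n_1,n_2$ with $L_{n_1}\neq L_{n_2}$.

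The first step is to show $b_{h^\ast}\in V_g(M)$ and $a_{h^\ast}\in\abs{G}W_g(M)$. Subtracting the relations $a_{h^\ast}+\abs{G}^4 L_{n_j}\, b_{h^\ast}\in\abs{G}W_g(M)$ for $j=1,2$ and cancelling a factor $\abs{G}$ (legitimate since $M$ is torsion-free) gives $\abs{G}^3(L_{n_1}-L_{n_2})\, b_{h^\ast}\in W_g(M)$; as the integer $\abs{G}^3(L_{n_1}-L_{n_2})$ is nonzero this forces $b_{h^\ast}\in\sqrt[M]{W_g(M)}=V_g(M)$. By Lemma~\ref{prop:WRadicalInSubgroup:lemma} it then follows that $\abs{G}^2 b_{h^\ast}\in W_g(M)$, hence $\abs{G}^4 L_n\, b_{h^\ast}\in\abs{G}W_g(M)$ for every $n$; plugging this into the relation for $n=n_2$ leaves $a_{h^\ast}\in\abs{G}W_g(M)$. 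This is exactly where the exponent $\abs{G}^4$ in the construction of $v_1(n),v_2(n)$ is used: once $b_{h^\ast}$ lies in $V_g(M)$ the growing term $\abs{G}^4 L_n b_{h^\ast}$ is swallowed by $\abs{G}W_g(M)$, and so it no longer obstructs the fixed term $a_{h^\ast}$.

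The second step is to contradict the definition of $\mf k_3$. Each projection $\pi_i\colon M\otimes\QQ\to\tilde N_i\otimes\QQ$ is $G$-equivariant with $\pi_i(M)=M_i$, so it maps $V_g(M)$ into $V_g(M_i)$; thus from $b_{h^\ast}\in V_g(M)$ we obtain that $(v_1,v_2)$ $g$-vanishes in $(i,h^\ast)$ for all $i\in\range l$. Likewise, since $a_{h^\ast}\in\abs{G}W_g(M)\subset W_g(M)$ and $v_{h^\ast}-\rho(g)v_{h^\ast}\in W_g(M)$, their sum $k_1-\rho(h^\ast)k_2$ lies in $W_g(M)\subset V_g(M)$, so $(k_1,k_2)$ $g$-vanishes in $(i,h^\ast)$ for all $i$; and $a_{h^\ast}\in\abs{G}W_g(M)+\abs{G}^3 M$ shows $(k_1,k_2)$ is not $g$-strongly unsolvable in $h^\ast$. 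Therefore, taking $K=\range l$, none of the three alternatives in the definition of ``$K$ $g$-admits $(v_1,v_2,k_1,k_2)$'' can hold at $h^\ast$, so $\range l$ does not $g$-admit $(v_1,v_2,k_1,k_2)$. On the other hand $\mf k_3>0$ forces the set of $g$-admitting subsets (for our fixed data) to be non-empty, and by Lemma~\ref{prop:admitancehereditary:lemma} it is upward closed and hence contains $\range l$ --- a contradiction. Consequently $\mf N$ is finite.

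I expect the main difficulty to lie in the last step: the careful bookkeeping needed to match the derived facts ($g$-vanishing in every $(i,h^\ast)$ and failure of strong unsolvability at $h^\ast$) against the layered definition of $g$-admittance. The manipulation of $\abs{G}$-powers via Lemma~\ref{prop:WRadicalInSubgroup:lemma} in the first step is the conceptual heart, but it is routine once one notices that it decouples $a_{h^\ast}$ from $b_{h^\ast}$.
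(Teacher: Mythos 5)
Your proof is correct and follows the same overall strategy as the paper: assume $\mf N$ is infinite, pigeonhole to fix $h^\ast\in C(g)$, subtract two conjugacy relations to see $b_{h^\ast}=v_1-\rho(h^\ast)v_2\in V_g(M)$, then isolate $a_{h^\ast}=k_1-\rho(h^\ast)k_2-v_{h^\ast}+\rho(g)v_{h^\ast}\in\abs{G}W_g(M)$, and conclude that $\range{l}$ fails to $g$-admit $(v_1,v_2,k_1,k_2)$, contradicting $\mf k_3\geq 1$. The one place you genuinely deviate is in how $a_{h^\ast}\in\abs{G}W_g(M)$ is extracted: the paper chooses a third index $n_3\in\mf N_{h^\ast}$ large enough that $\kgv\range{n_2}-\kgv\range{n_1}$ divides $\kgv\range{n_3}$ and subtracts a multiple of the difference relation, whereas you invoke Lemma~\ref{prop:WRadicalInSubgroup:lemma} to upgrade $b_{h^\ast}\in V_g(M)$ to $\abs{G}^2 b_{h^\ast}\in W_g(M)$ and thereby absorb the entire growing term $\abs{G}^4\kgv\range{n}\,b_{h^\ast}$ into $\abs{G}W_g(M)$ uniformly in $n$. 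Your route is a little cleaner, avoiding the divisibility observation about $\kgv\range{n}$, and it makes transparent exactly why the exponent $\abs{G}^4$ (rather than a lower one) is used in the construction of $v_1(n),v_2(n)$. Both methods are sound, and since Lemma~\ref{prop:WRadicalInSubgroup:lemma} is proved in the paper anyway, you incur no extra machinery.
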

		 \begin{proof}
		Assume for a contradiction that $\mf N$ is infinite. If $v_1(n)$ and $v_2(n)$ are conjugate, then it follows from Lemma \ref{prop:conjugatesinrepresentation:lemma} that there exists some $h'\in C(g)$ such that 
		 \begin{equation}v_1(n)-\rho(h')v_2(n) - (v_{h'}-\rho(g)v_{h'})\in \abs{G}W_g(M)\label{eq:conjfinitelyoften:conjugate}\end{equation}
		 Let $\mf N_h$ be the subset of $\mf N$ such that the above holds for $h=h'$. By the pigeonhole principle, there exists $h \in C(g)$ such that $\mf N_h$ is infinite.
		 
		 For $n_1,n_2\in\mf N_h$, by subtracting the above equations, we thus obtain $$
		 v_1(n_2)-v_1(n_1)-\rho(h)(v_2(n_2)-v_2(n_1)))\in \abs{G}W_g(M)
		 $$
		 from which it follows that \begin{equation}\abs{G}^4 (\kgv\range{n_2}-\kgv\range{n_1})(v_1-\rho(h)v_2)\in \abs{G}W_g(M).\label{eq:conjfinitelyoften:twiceconjugate}\end{equation} In particular it follows that for any $i\in\range{l}$, the pair $(v_1,v_2)$ vanishes in $(i,h)$.
		 Take now $n_3\in\mf N_h$ sufficiently large such that ${\kgv\range{n_2}-\kgv\range{n_1}}$ divides $\kgv\range{n_3}$.
		 Subtracting a multiple of equation (\ref{eq:conjfinitelyoften:twiceconjugate}) from equation (\ref{eq:conjfinitelyoften:conjugate}) for $n=n_3$, it follows that $$
		 k_1-\rho(h)k_2-(v_h-\rho(g)v_h)\in \abs{G}W_g(M)		 
		 $$
		 From this it follows both that $(k_1,k_2)$ is not strongly unsolvable in $h$ and that for all $i\in \range l$, $(k_1,k_2)$ vanishes in $(i,h)$.
		 All this demonstrates that $\range{l}$ does not $g$-admit $(v_1,v_2,k_1,k_2)$ which is in contradiction with the assumption that $\mf k_3\geq 1$.
		 \end{proof}
		 
		The remainder of this section is showing that the quotients needed to separate the conjugacy classes of the elements $v_1(n)$ and $v_2(n)$ for $n \notin \mf N$ are sufficiently large. For every $n\notin\mf N$, let $\psi_n:H\rightarrow Q$ be a group quotient to a finite group $Q$, such that $\psi_n(v_1(n))$ and $\psi_n(v_2(n))$ are non conjugate. As $H$ is conjugacy separable such a morphism exists. Let $N'$ be the kernel of this map and define $N=N'\cap M$, as $N'$ and $M$ are both normal, it follows that $N$ is as well. In particular $N$ must be closed under conjugation with $(v_h,h)$, similarly as shown in \cite[Theorem 3.3.]{dere2023residual}, which implies that $N$, seen as a submodule of $M$, is $\rho$-invariant.
		
	The strategy is now to associate to $N$ some subset $K\subset\range{l}$ such that $K$ $g$-admits $(v_1,v_2,k_1,k_2)$ and such that $N$ has index at least $Cn^{\dim K}$ where $C$ is some constant only depending on $G\acts M$ but \textbf{not} on $n$.	As $\psi_n(v_1(n))$ and $\psi_n(v_2(n))$ are non-conjugate, it certainly must be the case that $v_1(n)N$ and $v_2(n)N$ must be non conjugate, or thus thanks to Corollary \ref{prop:conjugatesinrepresentationquotient:corollary} it must hold for all $h$ that $$
		 v_2(n)-\rho(h)v_1(n)-(v_h-\rho(g)v_h)\notin \abs{G}W_g(M)+N
		 $$
		 Define $\mf H \subset C(g)$ as the subset consisting of those $h \in C(g)$ such that for all $i\in\range{l}$, the pair $(k_1,k_2)$ is not strongly unsolvable in $h$, and vanishes in $(i,h)$.
		 This implies that $$k_1-\rho(h)k_2-(v_h-\rho(g)v_h)\in V_g(M)\cap (\abs{G}W_g(M)+\abs{G}^3M)=\abs{G} W_g(M)$$ for all $h \in \mf H$. In particular, we thus have that $$
		 \abs{G}^4\kgv\range{n}(v_1-\rho(h)v_2)\notin N+\abs{G}W_g(M).
		 $$
		 
		From now on we write $\varpi_i$ for the maps from Remark \ref{prop:repindicatorZstrong:remark} applied to the irreducible subrepresentation corresponding to the $i$th component. As $N$ is $\rho$-invariant, it follows that $N\supset \varpi_1(N)\oplus\cdots\oplus \varpi_l(N)$ and by Remark \ref{prop:repindicatorZstrong:remark} also that $\varpi_1(N)\oplus\cdots\oplus \varpi_l(N)\supset\abs{G}N$.
		 We thus have \begin{align*}\abs{G}^4\kgv\range{n}&(v_1-\rho(h)v_2)\notin\\ &\left((\varpi_1(N)+ \abs{G}^2W_g(M_1))\oplus\cdots\oplus(\varpi_l(N)+ \abs{G}^2W_g(M_l))\right)\end{align*}
		 In particular there must exist some $i\in\range{l}$ such that $$\pi_i(\abs{G}^4\kgv\range{n}(v_1-\rho(h)v_2))\notin \varpi_i(N)+ \abs{G}^2W_g(M_i)$$ 
		 Choose for every $h \in \mf H$ one such index $i_h$ such that the previous holds.

		 Let $K=\set{i_h\mid h\in \mf H}$. By construction of $i_h$, it follows that $$\pi_{i_h}(\abs{G}^4\kgv\range{n}(v_1-\rho(h)v_2))\notin \abs{G}^2W_g(M_{i_h})$$ and by Lemma \ref{prop:WRadicalInSubgroup:lemma} it follows that $$\pi_{i_h}(\abs{G}^4\kgv\range{n}(v_1-\rho(h)v_2))\notin V_g(M_{i_h})$$ or thus that $(v_1,v_2)$ is weakly unsolvable in $i_h$.
		 In particular it follows that $K$ $g$-admits $(v_1,v_2,k_1,k_2)$ and thus that $\dim(K)\geq \mf k_3$.

		 We also have that $$\pi_i(\abs{G}^4\kgv\range{n}(v_1-\rho(h)v_2))\notin \varpi_i(N)\subset\varpi_i(N)+ \abs{G}^2W_g(M_i).$$ 
		 
		 \begin{lemma}
		 	The index of $\varpi_i(N)$ in $M_i$ is at least $C_in^{d_i}$ where $C_i$ is some non-zero constant not depending on $n$ and $d_i$ is the dimension of one of the irreducible subrepresentations of $M_i\otimes\CC$.
		 \end{lemma}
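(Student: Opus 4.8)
The plan is to reduce this to a $p$-adic estimate on $\ZZ_p[G]$-lattices of exactly the type underlying the residual finiteness lower bound in \cite{dere2023residual}. First I would dispose of the easy case: if $\varpi_i(N)$ has infinite index in $M_i$ there is nothing to prove, so I may assume $R := M_i/\varpi_i(N)$ is finite, and it is then a finite $G$-module because $\varpi_i(N)$ is a subrepresentation. Writing $w = \pi_i(v_1-\rho(h)v_2)$, the running hypothesis says precisely that $\abs{G}^4\kgv\range{n}\, w \notin \varpi_i(N)$, so the additive order $D$ of $w + \varpi_i(N)$ in $R$ does not divide $\abs{G}^4\kgv\range{n}$. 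Since $\kgv\range{n}$ is divisible by every prime power that is at most $n$, one can then extract a prime $p$ and an exponent $a$ with $p^a \mid D$ and $p^a > n$; write $q = p^a$.

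Next I would localise at $p$. The $p$-primary part $R_{(p)} \subset R$ is characteristic, hence a $G$-submodule, it still contains an element of order $q$, and tensoring the surjection $M_i \twoheadrightarrow R$ of $\ZZ[G]$-modules with the flat ring $\ZZ_p$ (using that $R$ is finite) presents $R_{(p)}$ as a finite quotient $\ZZ_p[G]$-module of the lattice $L := M_i \otimes \ZZ_p$. As $\abs{R} \geq \abs{R_{(p)}}$, it is enough to bound $\abs{R_{(p)}}$ below by $C_i q^{d_i}$. The representation theory enters here: since $M_i \otimes \QQ$ is $\QQ$-isotypic and, by Theorem \ref{thm:shur} applied to its irreducible type, $M_i \otimes \CC$ is a sum of $d_i$-dimensional irreducible subrepresentations lying in a single Galois orbit, every $\QQ_p[G]$-irreducible constituent of $L \otimes \QQ_p$ has $\QQ_p$-dimension a positive multiple of $d_i$, in particular at least $d_i$.

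The remaining and principal step is the local lattice estimate: for a $\ZZ_p[G]$-lattice $L$ all of whose $\QQ_p[G]$-composition factors of $L \otimes \QQ_p$ have dimension at least $d$, any finite quotient $\ZZ_p[G]$-module of $L$ containing an element of additive order $p^a$ has cardinality at least $C p^{ad}$, where $C > 0$ depends only on $G$ and $L$ and not on $p$, $a$ or the quotient. I would prove this as in \cite{dere2023residual}: passing to the $\ZZ_p[G]$-submodule generated by an element of maximal order and then to a suitable isotypic component reduces to the case where $L \otimes \QQ_p$ is a single $\QQ_p[G]$-irreducible $V$ of dimension $r \geq d$, in which case the $\ZZ_p[G]$-sublattices of $L$ form, up to isomorphism, a descending chain governed by the maximal order in $\mathrm{End}_{\QQ_p[G]}(V)$; comparing index against exponent along this chain yields the bound $p^{ca}$ with $c \geq d$. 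The delicate point — and the one I expect to be the real obstacle — is uniformity in $p$: a genuine loss relative to $p^{ad}$ can occur only at the primes dividing $\abs{G}$, where $\ZZ_p[G]$ fails to be a maximal order, and at the finitely many primes at which the semisimple $\QQ$-algebra $\mathrm{End}_{\ZZ[G]}(M_i) \otimes \QQ$ ramifies; as there are only finitely many such primes, their cumulative contribution is absorbed into the constant $C_i$. Combining everything gives $\abs{R} \geq \abs{R_{(p)}} \geq C_i q^{d_i} \geq C_i n^{d_i}$, which is the asserted bound.
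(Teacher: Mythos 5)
Your proposal is correct and takes the same route as the paper, which however dispatches this lemma with a one-line appeal to the methods of \cite{dere2023residual} (Theorems 1.2 and 5.15 therein), noting only that the argument stated there for a fixed basis vector applies verbatim to an arbitrary vector such as $\abs{G}^4\kgv\range{n}(v_1-\rho(h)v_2)$. Your sketch reconstructs the underlying $p$-adic argument explicitly --- extracting a prime power $p^a>n$ from the additive order of $w$ in $M_i/\varpi_i(N)$ via the divisibility of $\kgv\range{n}$, localising at $p$, using that every $\QQ_p[G]$-constituent of $M_i\otimes\QQ_p$ has dimension a positive multiple of $d_i$, and invoking the $\ZZ_p[G]$-lattice index-versus-exponent estimate --- and correctly singles out uniformity in $p$ as the delicate point, resolved because only the finitely many primes dividing $\abs{G}$ or ramifying in $\mathrm{End}_{\ZZ[G]}(M_i)\otimes\QQ$ can produce a loss, with the cumulative deficit absorbed into the constant $C_i$.
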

		 \begin{proof}
		 	This follows from the methods of the lower bound of \cite[Theorem 1.2]{dere2023residual}. This theorem relies on \cite[Theorem 5.15]{dere2023residual} which is proven for $\gcd\range{n}v_0$ where $v_0$ is the first basis vector. Nothing in the further proof however prevents us from swapping $v$ with any vector or thus in our case $\abs{G}^4\kgv\range{n}(v_1-\rho(h)v_2)$.
		 \end{proof}

		 The index of $\bigoplus_{i\in K} \varpi_i(N)$ is thus at least $Cn^{\sum_{i \in K} d_{i}}$ where  $C$ is a constant not depending on $n$. It also follows that $N$ has an index at least $\frac{C}{\abs{G}^{(\dim M)^2}}n^{\sum_{i \in K} d_{i_h}}$ in $\abs{G}M$ and thus that $N'$ has index at least $\frac{C}{\abs{G}^{1+(\dim M)^2}}n^{\sum d_{i_h}}$ in $H$.
		 As $\sum_{i \in K} d_{i_h}$ is precisely $\dim K$, it thus follows that $\conj_H(v_1(n),v_2(n))\geq C'n^{\mf k_3}$ for some $C^\prime > 0$.
		 Remember that $(v_1(n),g)$ and $(v_2(n),g)$ have a word norm of at most $\kgv\range n$ with respect to $S$. From this it thus follows that $$\conj_H(\kgv\range n)\succ n^{\mf k_3}\succ \ln^{\mf k_3}(\kgv\range n).$$ The proposition now follows from Lemma \ref{prop:logarithmicLowerBound:lemma}.

		 	
		 	\section{Upper Bound}
		 	\label{sec:1->2}
		 	In this section, we complete the proof of Theorem \ref{prop:conjugacyseparabilittysplitvirtuallyabelian:theorem} by proving that $\conj_H(n)\prec \ln^{\mf k_2}(n)$, with notations as in the result itself.
		 	
		 	A crucial part of the proof will be the construction of a polynomial which will allow us to find finite quotients to separate conjugacy classes. The following notation and observation will then be important.
		 	\begin{definition}\label{def:upperboundpoly}
		 		Let $f\in\ZZ[x_1\cdots x_n]$ be a polynomial, written as $$f(x_1,\cdots x_n)=\sum_{i_1}\cdots\sum_{i_n} a_{i_1,\cdots,i_n}x_1^{i_1}x_2^{i_2}\cdots x_n^{i_n}$$ with coefficients $a_{i_1,i_2,\cdots,i_n} \in \ZZ$. We define the polynomial $\vert f \vert \in\ZZ[x]$ as $$
		 		\vert f \vert (x)=\sum_{i_1}\cdots\sum_{i_n} \abs{a_{i_1,\cdots,i_n}}x^{i_1+i_2+\cdots+i_n}
		 		.$$
		 	\end{definition}
		 	\noindent The following estimate for a polynomial follows from a direct computation.
		 	\begin{lemma}
		 		\label{prop:monovariablebound}
		 		Let $f\in\ZZ[x_1\cdots x_m]$ be a  polynomial. For any $n \in \NN_0$ and integers $a_i\in[-n,n]$ it holds that $f(a_1,\cdots,a_m)\leq \vert {f} \vert (n)$
		 	\end{lemma}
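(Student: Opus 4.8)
The plan is to bound $f(a_1,\dots,a_m)$ term by term, using only the triangle inequality and the hypothesis that each $\abs{a_j}\le n$.

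First I would write $f$ in the expanded form from Definition \ref{def:upperboundpoly}, namely $f(x_1,\dots,x_m)=\sum_{i_1}\cdots\sum_{i_m} a_{i_1,\dots,i_m}\, x_1^{i_1}\cdots x_m^{i_m}$ with $a_{i_1,\dots,i_m}\in\ZZ$. Substituting the integers $a_j$, bounding a real number by its absolute value and then applying the triangle inequality to this finite sum gives
\begin{align*}
f(a_1,\dots,a_m) &\le \left| \sum_{i_1}\cdots\sum_{i_m} a_{i_1,\dots,i_m}\, a_1^{i_1}\cdots a_m^{i_m} \right| \\
&\le \sum_{i_1}\cdots\sum_{i_m} \abs{a_{i_1,\dots,i_m}}\,\abs{a_1}^{i_1}\cdots\abs{a_m}^{i_m}.
\end{align*}
Since every exponent $i_j$ is a non-negative integer and $0\le\abs{a_j}\le n$, we have $\abs{a_j}^{i_j}\le n^{i_j}$, so each summand is at most $\abs{a_{i_1,\dots,i_m}}\, n^{i_1+\cdots+i_m}$. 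Summing these bounds over all multi-indices and comparing with the definition of $\abs{f}$ yields exactly $\abs{f}(n)$, which is the desired inequality.

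There is no real obstacle here: the statement is a routine consequence of the triangle inequality. The only point worth a word of care is the degenerate case $n=0$ (equivalently, the constant term of $f$), where one uses the convention $0^0=1$ consistently on both sides; with that convention $\abs{f}(0)$ is precisely the absolute value of the constant term of $f$, so the bound still holds.
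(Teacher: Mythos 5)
Your proof is correct and is precisely the ``direct computation'' the paper alludes to but does not spell out; the triangle-inequality-plus-monotonicity argument is the only natural route here, so there is nothing to compare. The small remark about the $n=0$ case and the convention $0^0=1$ is a sensible bit of care, though strictly speaking the statement is only ever applied with $n\in\NN_0$ large, so it is not load-bearing.
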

		 	
		 In the next lemma, we construct surjective ring morphisms that we will need in the proof.
		
		 	\begin{lemma}\label{prop:modpexists}
		 		Let $n\in\NN_0$ and let $p$ be prime such that $p\cong 1\mod n$, then there exists a surjective morphism of rings $\ZZ[\zeta_n]\rightarrow \ZZp$.
		 	\end{lemma}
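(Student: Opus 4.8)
The plan is to realise $\ZZ[\zeta_n]$ as $\ZZ[x]/(\Phi_n(x))$, where $\Phi_n$ denotes the $n$-th cyclotomic polynomial, and then to evaluate at a suitable root of $\Phi_n$ modulo $p$. Since $\Phi_n$ is monic and is the minimal polynomial of $\zeta_n$ over $\QQ$, Gauss's lemma shows that the evaluation map $\ZZ[x]\to\ZZ[\zeta_n]$, $x\mapsto\zeta_n$, has kernel exactly $(\Phi_n(x))$, so $\ZZ[\zeta_n]\cong\ZZ[x]/(\Phi_n(x))$. Hence, to produce a ring morphism $\ZZ[\zeta_n]\to\ZZ/p\ZZ$ it suffices to exhibit an element $a\in\ZZ/p\ZZ$ with $\Phi_n(a)\equiv 0\pmod p$: the evaluation $\ZZ[x]\to\ZZ/p\ZZ$, $x\mapsto a$, then kills $\Phi_n(x)$, hence the whole ideal $(\Phi_n(x))$, and therefore descends to a ring map $\ZZ[\zeta_n]\to\ZZ/p\ZZ$, which is automatically surjective because its image is a subring of $\ZZ/p\ZZ$ containing $1$.

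So the heart of the argument is producing such a root. Because $p\equiv 1\pmod n$ we have $n\mid p-1=\abs{(\ZZ/p\ZZ)^\times}$, and since $(\ZZ/p\ZZ)^\times$ is cyclic it contains an element $a$ of multiplicative order exactly $n$. I then claim $\Phi_n(a)=0$ in $\ZZ/p\ZZ$. Indeed, from $a^n=1$ and the factorisation $x^n-1=\prod_{d\mid n}\Phi_d(x)$, valid over $\ZZ$ and hence over $\ZZ/p\ZZ$, the element $a$ is a root of $\Phi_d$ for some $d\mid n$. But $\Phi_d(x)$ divides $x^d-1$, so $a^d=1$; as $a$ has order exactly $n$ this forces $d=n$, i.e. $\Phi_n(a)=0$. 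Combined with the previous paragraph, this yields the desired surjective morphism $\ZZ[\zeta_n]\to\ZZ/p\ZZ$ sending $\zeta_n\mapsto a$.

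There is no serious obstacle: the only points needing a little care are the identification $\ZZ[\zeta_n]\cong\ZZ[x]/(\Phi_n(x))$ (standard, using monicity of $\Phi_n$) and the degenerate case $n\le 1$, for which one may simply take $a$ to be the appropriate reduction of $\zeta_n\in\{\pm1\}$ — for $n=1$ the reduction map $\ZZ\to\ZZ/p\ZZ$ already does the job. Note also that $p\equiv 1\pmod n$ forces $\gcd(p,n)=1$, so $x^n-1$ is separable modulo $p$; this makes the cyclotomic factorisation well behaved, although, as the argument shows, separability is not actually needed.
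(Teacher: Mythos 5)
Your proposal is correct and follows essentially the same route as the paper: identify $\ZZ[\zeta_n]$ with $\ZZ[x]/(\Phi_n(x))$, use cyclicity of $(\ZZ/p\ZZ)^\times$ together with $n\mid p-1$ to produce an element of order exactly $n$, and then use the factorisation $x^n-1=\prod_{d\mid n}\Phi_d(x)$ over the field $\ZZ/p\ZZ$ to conclude that this element is a root of $\Phi_n$ (the paper phrases this as showing $\Phi_i(u)\neq 0$ for proper divisors $i\mid n$ and invoking that $\ZZ/p\ZZ$ has no zero divisors, which is the contrapositive of your order argument).
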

		 	\begin{proof}
				Recall that $\ZZ[\zeta_n]$ is as a ring isomorphic with $\frac{\ZZ[x]}{(\Phi_n(x))}$, where $\Phi_n$ is the $n$th cyclotomic polynomial. 
				
				We first construct a morphism from $\ZZ[x]$ to $\ZZp$. The group of units $\left(\ZZp\right)^\times$ is a cyclic group of order $p-1$. As $n$ divides $p-1$, it follows that there exists some unit $u\in\left(\ZZp\right)^\times$ of order $n$. Let $\varphi:\ZZ[x]\rightarrow \ZZp$ be the unique ring morphism such that $\varphi(1)=1$ and $\varphi(x)=u$. 
				
				To show that $\varphi$ induces a morphism $\varphi:\ZZ[\zeta_n]\rightarrow \ZZp$, we need to show that $\varphi$ maps the ideal $\left(\Phi_n(x)\right)$ to $0$. It suffices thus to show that $\Phi_n(x)$ is mapped to $0$. As $u$ is of order $n$, it is clear that $\varphi(x^n-1)=0$. The polynomial $x^n-1$ factors as $\prod_{i\mid n}\Phi_i(x)$ over the integers. If we can demonstrate that $\varphi(\Phi_i(x))\neq 0$, whenever $i\mid n$ but $i\neq n$, then we are done as $\ZZp$ is a field without zero divisors. Notice that $\Phi_i(x)\mid x^{i}-1$ and thus $\varphi(\Phi_i(x))\mid u^{i}-1$. As $u$ is of order $n$ and $i$ is strictly less then $n$, it follows that $u^{i}-1$ is non-zero, and thus also $\varphi(\Phi_i(x))$ non-zero.
		 	\end{proof}

		 	The maps $\varpi$, introduced in Section \ref{sec:prereq}, allow us to pass from representations to certain subrepresentations. However, this only works if these correspond to different characters. If this is not the case then we can use the following lemma, which we will only use for either $\ZZ$ or $\frac{\ZZ}{p\ZZ}$.
		 	\begin{lemma}\label{prop:identicalcoppies:lemma}
		 		Let $m$ be a positive integer and $R$ an integral domain of order at least $m+1$. Take $M_1$ any $G$-representation on $R$ with $V_1$ a radical subspace of $M_1$. For any $l \geq 0$ we denote by $M_l=M_1^l=M_1\oplus\cdots\oplus M_1$ and $V_l=V_1^l\subset M_l$. Then there exists a finite set of $G$-maps $\left\{\varphi_i:M_l\rightarrow M_1\right\}_{i \in I}$, such that for every subset $\mc M \subset M_l$ of size at most $m$ with $\mc M\cap V_l=\emptyset$, there exists some $i \in I$ such that $\varphi_i(\mc M)\cap V_1=\emptyset$.
		 	\end{lemma}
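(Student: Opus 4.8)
The plan is to construct the family by induction on $l$, using only $G$-maps of the form $(x_1,\dots,x_l)\mapsto\sum_{j=1}^{l}r_jx_j$ with $r_j\in R$. Such a map is automatically a $G$-map, since the $G$-action on $M_1$ is $R$-linear and projections and sums of $G$-maps are $G$-maps. The cases $l\le 1$ are immediate: for $l=1$ take $\{\mathrm{id}_{M_1}\}$, and for $l=0$ the only admissible $\mc M$ is empty. For the inductive step, writing $M_l=M_1\oplus M_{l-1}$ with $M_{l-1}=M_1^{l-1}$, the family will be
\[
\mathcal F_l=\bigl\{\,(x_1,x')\mapsto r x_1+\psi(x')\ \bigm|\ r\in T,\ \psi\in\mathcal F_{l-1}\,\bigr\},
\]
where $T\subset R$ is a fixed subset of cardinality $m+1$ (which exists as $|R|\ge m+1$) and $\mathcal F_{l-1}$ is the family already produced for $M_{l-1}$; this set is finite, of size $(m+1)^{l-1}$, and each of its members is a $G$-map (a scalar multiple of the first projection plus a $G$-map applied to the second projection).

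The key elementary fact is the following. Since $V_1$ is radical, $M_1/V_1$ is $R$-torsion-free, hence embeds into the vector space $(M_1/V_1)\otimes_R\mathrm{Frac}(R)$ over $\mathrm{Frac}(R)$. Consequently, for any $v_1\in M_1\setminus V_1$ and any fixed $w\in M_1$, there is \emph{at most one} $r\in R$ with $rv_1+w\in V_1$: if $r$ and $r'$ both worked then $(r-r')(v_1+V_1)=0$ in that vector space, and since $v_1+V_1\ne 0$ this forces $r=r'$.

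For the inductive step, fix $\mc M\subset M_l$ with $|\mc M|\le m$ and $\mc M\cap V_l=\emptyset$, and split $\mc M=\mc M_a\sqcup\mc M_b$ where $v=(v_1,v')$ lies in $\mc M_a$ iff $v'\notin V_{l-1}$. Note that every $v\in\mc M_b$ has $v_1\notin V_1$, and (since $V_l=V_1\oplus V_{l-1}$) every $v\in\mc M$ with $v_1\in V_1$ lies in $\mc M_a$. The set $\{v'\mid v\in\mc M_a\}\subset M_{l-1}$ has size $\le m$ and avoids $V_{l-1}$, so by the inductive hypothesis there is $\psi\in\mathcal F_{l-1}$ with $\psi(v')\notin V_1$ for all $v\in\mc M_a$ (if $\mc M_a=\emptyset$, take any $\psi$). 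Fix this $\psi$. For $v\in\mc M$ with $v_1\in V_1$ we then have $rv_1+\psi(v')\notin V_1$ for every $r\in R$, since $rv_1\in V_1$ and $\psi(v')\notin V_1$; for $v\in\mc M$ with $v_1\notin V_1$, the fact above gives at most one bad value of $r$. Hence at most $|\mc M|\le m$ values of $r\in R$ are bad for some element of $\mc M$, so we may pick $r\in T$ that is good for all of them; then $\varphi:(x_1,x')\mapsto rx_1+\psi(x')$ lies in $\mathcal F_l$ and satisfies $\varphi(\mc M)\cap V_1=\emptyset$, completing the induction.

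The only point requiring care — and the one I would be most wary of — is the bookkeeping that prevents the degenerate case $v_1\in V_1$ from producing infinitely many bad values of $r$: this is exactly why $\psi$ must be chosen by applying the inductive hypothesis to $\mc M_a$ (equivalently, to all $v$ whose last $l-1$ coordinates escape $V_{l-1}$) rather than to $\mc M$ itself, and why the cut into $\mc M_a,\mc M_b$ is made along the last coordinates. Everything else — the $G$-equivariance and finiteness of $\mathcal F_l$, and the union bound over $\mc M$ on the bad values of $r$ — is routine.
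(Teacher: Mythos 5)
Your proof is correct and follows essentially the same strategy as the paper's: an inductive reduction in the number of copies, using $R$-linear $G$-maps parametrized by a set of $m+1$ scalars, with radicality of $V_1$ plus the integral-domain hypothesis guaranteeing that each element of $\mc M$ rules out at most one scalar, and then pigeonhole. The paper's reduction step goes $M_{l+1}\to M_l$ via $(v_0,\dots,v_l)\mapsto(v_1-v_0,\dots,v_l-\lambda v_0)$ and argues directly that the sets $K_\lambda+V_{l+1}$ meet only in $V_{l+1}$, which makes the $\mc M_a/\mc M_b$ case split you need (to handle elements whose first coordinate already lies in $V_1$) unnecessary, but the underlying mechanism is identical.
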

		 
		 	\begin{proof}
		 		When $l=1$, the result holds for the singleton containing the identity map. By induction it thus suffices to find maps $\varphi_i: M_{l+1} \to M_{l}$ satisfying the conditions of the lemma, by composing these with the maps $M_l \to M_1$ from the induction hypothesis. So the remainder of the proof will be constructing maps $M_{l+1} \to M_l$ satisfying the property of the statement. In fact, we will show that any $m+1$ different maps of a certain type will satisfy this property.
		 		
		 		For any $\lambda\in R$ we define the map $\varphi_\lambda:M_{l+1}\rightarrow M_{l}$ by $$\varphi_\lambda(v_0,v_1,v_2,\cdots,v_l)= (v_1-v_0,v_2-v_0,\cdots,v_l-\lambda v_0).$$
		 		Notice that this map is a $G$-map. Furthermore, this map is surjective and its kernel is given by$$
		 		K_\lambda=\set{(v,v,\cdots,\lambda v)\in M_{l+1}\mid v\in M_1}.
		 		$$
		 		A computation shows that $\varphi_\lambda\inv(V_l)=K_\lambda+V_{l+1}$.
		 		
		 		We will demonstrate that if $\lambda_1$ and $\lambda_2$ are distinct, then $$
		 		\left(K_{\lambda_1}+V_{l+1}\right)\cap\left( K_{\lambda_2}+V_{l+1}\right)=V_{l+1}.
		 		$$ 	 		
		 		Take $k_1=(v_1,v_1,\cdots  \lambda_1v_1) \in K_{\lambda_1}$ and $k_2=(v_2,v_2,\cdots,\lambda_2v_2) \in K_{\lambda_2}$ such that $k_1+V_{l+1}=k_2+V_{l+1}$, or in other words such that $k_1-k_2\in V_{l+1}.$
		 		Projecting onto the first coordinates, implies that $v_1-v_2\in V_1$. Similarly, projecting onto the last coordinate implies that $\lambda_1v_1-\lambda_2v_2\in V_1$. Combining these two, it follows that $(\lambda_2-\lambda_1)v_2\in V_1$. As $\lambda_1$ and $\lambda_2$ are distinct, and as $V_1$ is radical, it follows that $v_2\in V_1$ (and thus also $v_1\in V_1$). By definition of $V_{l+1}$, it follows that $k_1$ and $k_2$ both lie in $V_{l+1}$, leading to the claim above.
		 		
		 		Now fix any subset $\Lambda \subset R$ of order $m+1$, then we show that the maps $\varphi_\lambda$ with $\lambda \in \Lambda$ satisfy the property of the lemma. Let $\mc M$ be an arbitrary subset of $M_{l+1}$ of order at most $m$ with $\mc M \cap V_{l+1} = \emptyset$. By the pigeonhole principle, there exists some $\lambda\in\Lambda$ such that none of the elements of $\mc M$ ly in $K_\lambda+V_{l+1}$, and thus such that $\varphi_\lambda(\mc M)\cap V_l=\emptyset$. In other words, the maps $\varphi_\lambda$ with $\lambda \in \Lambda$ satisfy the property of the lemma.
		 	\end{proof}
		 	
		 	The following lemma talks about separating a finite set from $V_g(M)$ in a finite quotient whenever $M$ is irreducible. The main example of $\mf A$ in what follows are subsets of the set $\set{v_1-\rho(h)v_2\mid h\in C(g)}$. In what follows, we will assume that a finite generating set on $M$ is given, meaning that the norm of elements $m \in M$ is defined. For a finite set $\mf A$, we define $\Vert \mf A \Vert = \max \set{ \norm{a} \mid a \in \mf A}$.

		 	\begin{lemma}\label{prop:conjsepnotinW':lemma}
		 		Let $\rho:G\acts M$ be a $\ZZ$-irreducible representation of a finite group on a free $\ZZ$-module. Let $d$ be the dimension of the $\CC$-irreducible subrepresentations of $M\otimes \CC$. For every $g\in G$ there exists some constant $C$ such that for any $\mf A\subset M$ with $\mf A\cap V_g(M)=\emptyset$ and $\abs{\mf A}\leq \abs{G}$ there exists a finite action-preserving quotient $\pi:M\rightarrow Q$ such that $\vert Q \vert \leq C + C \ln^d(\Vert \mf A \Vert)$ and such that $\pi(a)\notin\pi(V_g(M))$ whenever $a\in \mf A$.
		 		
		 	\end{lemma}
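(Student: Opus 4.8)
The plan is to build $\pi$ by reducing $M$ modulo a carefully chosen prime and then invoking the residual finiteness machinery of \cite{dere2023residual} together with Lemma~\ref{prop:identicalcoppies:lemma} to handle all of $\mf A$ simultaneously. First I would reduce to the case $g\in Z(G)$ by replacing $G$ with $C(g)$; this changes neither $W_g(M)$ nor $V_g(M)$, and $\abs{C(g)}$ divides $\abs G$, so $\abs{\mf A}\le\abs G$ is preserved (for a genuinely $G$-equivariant final quotient one then averages over a transversal of $C(g)$ in $G$, costing only a bounded factor in $\abs Q$; alternatively one works throughout with the square-hull decomposition of $M\mid_{C(g)}$ via Lemma~\ref{prop:structOfVgM:lemma}). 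With $g$ central, $V_g(M)$ is an honest subrepresentation, so by Lemma~\ref{prop:iredIfSubspaceFiniteIndex:lemma} it is either of finite index in $M$ — in which case $\pi\colon M\to M/V_g(M)$ already works with $\abs Q$ a constant depending only on $\rho$ — or trivial, in which case $W_g(M)=0$, so $\rho(g)=\id_M$, and the claim reduces to the residual finiteness statement: separate $\le\abs G$ vectors of norm $\le\Vert\mf A\Vert$ from $0$ by an action-preserving quotient of size $\le C+C\ln^{d}(\Vert\mf A\Vert)$.

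For this I would follow \cite{dere2023residual}. For $a\in\mf A$ let $I_a$ be the index of $a$ in $M$; by Lemma~\ref{prop:monovariablebound} applied to the coordinate forms, $I_a$ is bounded by a fixed polynomial in $\Vert\mf A\Vert$. Using Lemma~\ref{prop:pnondevider} choose a prime $q$ with $q\equiv 1\bmod\abs G$, with $q>\abs G$, with $q$ avoiding a fixed finite set of bad primes (those dividing $\abs G$, dividing the exponent of the torsion of $M/W_g(M)$, or for which the reduction of $\rho$ modulo $q$ fails to be semisimple with all homogeneous components absolutely irreducible of dimension $d$), and with $q\nmid\prod_{a\in\mf A}I_a$; then $q\le C+C\ln(\Vert\mf A\Vert)$. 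Since $q\equiv1\bmod\abs G$, Lemma~\ref{prop:modpexists} makes the reduction split: $M/qM$ is a semisimple $\frac{\ZZ}{q\ZZ}[G]$-module whose homogeneous components have the form $L^{\oplus m}$ with $L$ absolutely irreducible of dimension $d$ and $m$ the Schur index. Moreover $\pi_q(V_g(M))=\pi_q(W_g(M))$ since $q$ is coprime to the torsion exponent, and inside each component this image is $(L')^{\oplus m}$ for a subspace $L'\subseteq L$, which is radical as we are over a field. By the choice of $q$ each $a\in\mf A$ has image in $M/qM$ outside $\pi_q(V_g(M))$, hence outside $(L')^{\oplus m}$ in at least one component. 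Applying Lemma~\ref{prop:identicalcoppies:lemma} in that component, with radical subspace $V_1=L'$ and its parameter taken to be $\abs G\ge\abs{\mf A}$ (legitimate since $\abs{\frac{\ZZ}{q\ZZ}}=q\ge\abs G+1$), yields a $G$-map $L^{\oplus m}\to L$ sending those images off $L'$.

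Composing $M\to M/qM$, the projection to the relevant homogeneous component, and such a map, then gathering over all components, produces an action-preserving $\pi\colon M\to Q$ with $\pi(a)\notin\pi(V_g(M))$ for every $a\in\mf A$. The remaining — and main — difficulty is purely quantitative: to force $\abs Q\le q^{d}\le C+C\ln^{d}(\Vert\mf A\Vert)$ rather than the naive $q^{\dim M}$, one must not use the rational prime $q$ but a single maximal ideal $\mf q$ of the order $\ZZ[\rho(G)]\subset\operatorname{End}_\ZZ(M)$ lying over $q$, exactly as in \cite[Theorem~5.15]{dere2023residual}: modulo $\mf q$ the module $M$ reduces to a single copy of $L^{\oplus m}$, and Lemma~\ref{prop:identicalcoppies:lemma} then collapses this to $L$, of size $q^{d}$. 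The bookkeeping that the image of $V_g(M)$ behaves as expected under this reduction rests on Lemmas~\ref{prop:WRadicalInSubgroup:lemma} and~\ref{prop:structOfVgM:lemma}; combined with the single-ideal choice of quotient and Lemma~\ref{prop:identicalcoppies:lemma}, this gives the stated bound.
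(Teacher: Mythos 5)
Your overall plan — reduce modulo a carefully chosen prime, use the residual-finiteness machinery of \cite{dere2023residual}, then apply Lemma~\ref{prop:identicalcoppies:lemma} to collapse the identical irreducible copies — follows the paper's strategy. However there are two genuine gaps that prevent it from working as written.

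First, the dichotomy ``$V_g(M)$ is either of finite index or trivial'' only holds when $g$ is already central in $G$. Lemma~\ref{prop:iredIfSubspaceFiniteIndex:lemma} requires irreducibility, and once you replace $G$ by $C(g)$ the restriction $\rho\mid_{C(g)}$ is in general \emph{not} irreducible, so $V_g(M)$ can be a nontrivial $C(g)$-subrepresentation of infinite index, and your case analysis simply does not apply. You flag the ``averaging over a transversal'' fix, but intersecting kernels over a transversal of $C(g)$ in $G$ produces a quotient of size $\abs{Q_0}^{[G:C(g)]}$, raising the \emph{exponent} of $\ln$ by a factor $[G:C(g)]$ rather than contributing a multiplicative constant; this overshoots the target $\ln^d$. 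The paper sidesteps the problem entirely: the $C(g)$-decomposition (and the associated projection $\varpi_{V_g(M)}$) is used only to build a polynomial that controls the choice of prime, while the quotient itself is produced from $G$-equivariant maps $\varpi_{\rho_i}$ and the $G$-maps of Lemma~\ref{prop:identicalcoppies:lemma}, so no averaging or intersection over a transversal is ever needed.

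Second, and more importantly, the prime selection ``$q\nmid\prod_{a\in\mf A}I_a$'' (with $I_a$ the index of $a$ in $M$) is not enough. It guarantees $a\notin qM$, but what you actually need is that, after projecting to a single $\QQ[\zeta_G]$-homogeneous component (equivalently: after reducing modulo a maximal ideal $\mf q$ over $q$ as you suggest), the image of $a$ is still nonzero and still outside the reduction of $V_g(M)$. An element nonzero mod $qM$ can perfectly well die under $\varpi_{\rho_i}$ or land in $\mf q M$. This is exactly the point the paper handles by constructing the integer-coefficient polynomial map $F$, obtained as the coordinate-wise product of $f_i = \varpi_{\rho_i}\comp\varpi_{V_g(M)}$ over the Galois orbit (so that $F$ has coefficients in $\ZZ$ by Galois invariance), with $\ker F = V_g(M)$. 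Choosing $p$ so that $p$ does not divide the index of $F(a)$ for any $a\in\mf A$ — which Lemma~\ref{prop:monovariablebound} bounds polynomially in $\Vert\mf A\Vert$ and Lemma~\ref{prop:pnondevider} turns into $p\le C+C\ln\Vert\mf A\Vert$ — is precisely the correct condition: since $\ZZ/p\ZZ$ has no zero divisors, non-vanishing of the product forces non-vanishing of each $f_i(a)$ modulo $p$, which is what lets you project to a single component and then collapse via Lemma~\ref{prop:identicalcoppies:lemma} to an irreducible of order $p^d$. Without this polynomial (or some replacement for it) your prime choice gives no control after the projection, and the bound $\abs Q\le p^d$ cannot be secured.
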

		 	\begin{proof}
		 		Fix an element $g \in G$ and let $\iota$ be the embedding of $C(g)$ into $G$ with corresponding $C(g)$-representation $\kappa=\rho\comp\iota$. The representation 	 		$\kappa$ is not necessarily irreducible and thus we can write $M\otimes\QQ=\displaystyle \bigoplus_{j\in J}K_j$ for some index set $J$, where $K_j$ is the subspace by combining all isomorphic $\QQ$-irreducible subrepresentations of $\kappa$ as before. In particular, if $i \neq j$, then $K_i$ and $K_j$ do not have isomorphic subrepresentations. Denote now $B=\set{j\in J\mid \kappa_j(g)=\id_M}$, that is, the set of indices $j$ such that $g$ acts trivially under the representation $\kappa_j$. By Lemma \ref{prop:structOfVgM:lemma}, $V_{g}(M)$ can be rewritten as $$M\cap \sqrt{\bigoplus_{j\notin B} K_j}.$$

		 		Consider the map $\varpi_{V_{g}(M)}=\displaystyle \prod_{j\notin B}(c_{\kappa_j}\id_M-\varpi_{\kappa_j})$ where the product means composition of maps. Notice that this is well-defined, as Lemma \ref{prop:indicommut} guarantees that the order of composing the maps does not matter. This map has $V_{g}(M)$ as kernel, furthermore, on $\displaystyle \bigoplus_{j\in B}K_j$,  this map acts as multiplication with the constant $\displaystyle \prod_{j\notin B}c_{\rho_j}$.
		 		Up to a constant, this is thus a projection on $\displaystyle \bigoplus_{j\in B}K_j$ with $V_{g}(M)$ as kernel.
		 		
		 		Let the decomposition into irreducible representations of $\rho\otimes\QQ[\zeta_G]$ be $$
		 		\rho\otimes\QQ[\zeta_G]=\bigoplus_{i\in I}\rho_i^m
		 		$$
		 		and let $M\otimes\QQ[\zeta_G]=\displaystyle \bigoplus_{\substack{i\in I\\1\leq l\leq m}}N_{i,l}$ be such that $N_{i,l}$ are irreducible subspaces on which $\rho$ acts like $\rho_i$ and let $N_i=\bigoplus_{1\leq l\leq m}N_{i,l}.$
		 		For $i\in I$, we now define the maps $f_i:M\otimes\ZZG\rightarrow M\otimes\ZZG$ as follows:$$
		 		f_i=\varpi_{\rho_i}\comp\varpi_{V_{g}(M)}
		 		$$
		 		Note that thanks to Lemma $ \ref{prop:indicommut}$,  the map $\varpi_{V_{g}(M)}$and the map $\varpi_{\rho_i}$ commute, or thus that $f_i=\varpi_{V_{g}(M)}\comp\varpi_{\rho_i}$.
		 		The maps $f_i$ are linear maps $M\otimes\ZZG\rightarrow M\otimes\ZZG$ as composition of maps of the form $\varpi_\rho'$ and $(c\id_{\rho'}-\pi_{\rho'})$, all of which are linear. 
		 		
		 		Fix $\beta$ a basis of $M$. For any $\sigma\in\mathrm{Gal}(\QQ[\zeta_G]/\QQ)$, we consider $\sigma(f_i)$, which is just applying the transformation $\sigma$ to all entries in the basis $\beta$. This can be decomposed as $\sigma(\varpi_{\rho_i})\comp \sigma(\varpi_{V_{g}(M)})$. The map $\varpi_{V_{g}(M)}$ restricts to a linear map from $M$ to $M$ and is as such invariant under $\sigma$. The map $\sigma(\varpi_{\rho_i})$ however, thanks to Theorem \ref{thm:shur} is of the form $\varpi_{\rho_{i'}}$, where $i$ and $i'$ may be distinct. It thus follows that $\sigma(f_i)=f_{i'}$ for some $i'\in I$.
		 		
		 		Consider then the function $F$ obtained by coordinate multiplication of the functions $f_i$. That is if for $b \in \beta$ the maps  $f_{ib}:M\otimes\ZZG\rightarrow \ZZG$ are such that $f_i=\sum_{b\in\beta}f_{ib}b$, then $F$ is the function given by $\sum_{b\in\beta}\left( \prod_{i\in I} f_{ib}\right)b$, where the product is taken point-wise this time. As $f_i$ all are linear with entries in $\ZZG$, it follows that $F: M \otimes \ZZG \to M \otimes \ZZG$ is polynomial with coefficients in $\ZZG$. We once again apply a Galois transformation $\sigma\in\mathrm{Gal}(\QQ[\zeta_G]/\QQ)$ to $F$. As $\sigma$ just permutes the indices of $i$, it follows that $\sigma(F)=F$. By the fundamental theorem of Galois theory, we thus have that $F$ has integer coefficients and thus can be considered as a polynomial map $F: M \to M$. 
		 		
		 		We will first show that the kernel of $F$ is exactly $V_{g}(M)$.
		 		As $V_{g}(M)$ is the kernel of $\varpi_{V_{g}(M)}$, it is clear that $V_{g}(M)$ lies in the kernel of $f_i$ for every $i \in I$. It thus also follows that $V_{g}(M)$ must lie in the kernel of $F$. Conversely, consider $v\in M$ and suppose $F(v)=0$. Then all the functions $f_i(v)$ must be $0$. Indeed suppose that $f_i(v)$ is non-zero, then there exists some $b\in \beta$ such that $f_{ib}(v)\neq0$. As $\beta$ is invariant under Galois actions, it would follow that for any $\sigma\in\mathrm{Gal}(\QQ[\zeta_G]/\QQ)$ that $\sigma(f_{ib})(v)\neq0$ and thus that $\prod_{i\in I}f_{ib}(v)$ is non-zero. This contradicts the fact that $F(v)$ must be $0$.
		 		Thus $f_i(v)=0$ for all $i\in I$. Consider now the map $\sum f_i$.
		 		Notice that this is $(\sum_{i\in I}\varpi_{\rho_i})\comp\varpi_{V_{g}(M)}$. Furthermore notice that the map $\sum_{i\in I}(\varpi_{\rho_i})$ is just multiplication by some non-zero constant as $\rho$ is irreducible over $\QQ$. As $f_i(v)=0$, it follows that $\sum_{i\in I} f_i(v)$ must be $0$ and thus must $\varpi_{V_{g}(M)}(v)=0$, or thus that $v\in V_{g}(M)$.
		 		
		 		Let $p$ be a prime number congruent to $1\mod\abs{G}$, we will study the image of $v_0$ in $\frac{M}{pM}=M\otimes\ZZp$.
		 		Let $\varphi:\ZZG\rightarrow \ZZp$ be the ring-epimorphism of Lemma \ref{prop:modpexists}. We then have the following diagram of ring morphisms.
		 		\[\begin{tikzcd}[ampersand replacement=\&]
		 			\ZZ \& \ZZG \\
		 			\ZZp
		 			\arrow["1"', from=1-1, to=2-1]
		 			\arrow["1", from=1-1, to=1-2]
		 			\arrow["\varphi", from=1-2, to=2-1]
		 		\end{tikzcd}\]
		 		The horizontal and vertical arrow are the unique ring morphism from $\ZZ$ to the other ring. By unicity of the vertical arrow, it follows that the diagram commutes.
		 		This induces the following commutative diagram of $G$-modules.
		 		\[\begin{tikzcd}[ampersand replacement=\&]
		 			M\otimes\ZZ \& M\otimes\ZZG \\
		 			M\otimes\ZZp
		 			\arrow["\id_M\otimes1"', from=1-1, to=2-1]
		 			\arrow["\id_M\otimes1", from=1-1, to=1-2]
		 			\arrow["\id_M\otimes\varphi", from=1-2, to=2-1]
		 		\end{tikzcd}\]
		 		The horizontal and vertical arrows are a morphism of $\ZZ[G]$-modules. The diagonal map is a $G$-map of $\ZZG$-modules. The maps $\varpi_{\rho_i}$ and $\varpi_{\kappa_j}$ are combinations of the group action and scalar multiplication, thus they commute with the morphisms in this diagram. 
		 		
		 		The map $\bar{\varphi} = \id_M \otimes \varphi$ cannot be applied immediately on any of the irreducible components $N_{i,l}$, however we can do this on the latices $N_{i,l}\cap \left( M\otimes\ZZG\right)$. For $\rho$-invariant lattices $N$ and $N'$ in $N_{i,l}$, the induced irreducible representations $\bar{\varphi}(N)$ and $\bar{\varphi}(N')$ over $\ZZp$ have the same characters, hence it follows by \cite[Corollary 9.22]{isaacs2006character} that they are isomorphic over $\ZZp$. In particular, we have that $\bar{\varphi}\left(N_{i,l}\cap \left(M\otimes\ZZG\right)\right)$ and $\bar{\varphi}\left(N_{i,l'}\cap \left(M\otimes\ZZG\right) \right)$ are always isomorphic.  		In what follows we just denote $\bar{\varphi}(N_{i,l})$ or $\bar{\varphi}(N_{i})$ instead of the more cumbersome $\bar{\varphi}\left(N_{i,l'}\cap \left(M\otimes\ZZG\right) \right)$ or $\bar{\varphi}\left(N_{i,l'}\cap \left(M\otimes\ZZG\right) \right)$. Combining \cite[Corollary 9.15]{isaacs2006character} with \cite[Lemma 15.5]{isaacs2006character}, we obtain that $\bar{\varphi}(N_i)$ is isomorphic to $\bar{\varphi}(N_{i,1})^m$

		 		On $M\otimes\ZZG$, the maps $\varpi_\rho$ behave like a projection times a constant. We demonstrate that the induced maps on $M \otimes \ZZp$, which we denote as $\bar{\varphi}(\varpi_\rho)$, behave similarly. For this, we study the behavior of $\bar{\varphi}(\varpi_{\rho_i})$ on $\bar{\varphi}(N_{j,l})$ for $i,j\in I$. First if $i \neq j$, then we have $\varpi_{\rho_i}(N_{j,l})=0$. It thus follows that $\bar{\varphi}(\varpi_{\rho_i})(\bar{\varphi}(N_{j,l}))=0$. Now suppose that $i=j$. Then $\varpi_{\rho_i}\mid_{N_{j,l}}$ is multiplication with some constant $c_{\rho_i}$. The number $c_{\rho_i}$ is a divisor of $\abs{G}$ and thus coprime with $p$. It follows that $\bar{\varphi}(\varpi_{\rho_i})\mid_{\bar{\varphi}(N_{j,l})}$ is multiplication with some unit.
		 		Similar results hold for the maps $\varpi_{\kappa_j}$. Using that $\bar{\varphi}$ and $\varpi_\chi$ always commute, it follows that $\bar{\varphi}(M)$ decomposes as $\bigoplus_{i\in I} \bar{\varphi}(N_i)$. where the projection maps are given (up to a constant) by $\varpi_{\rho_i}$
		 		

		 		Let $v\in M\otimes \ZZG$ be such that for some $i$ holds that $\bar{\varphi}\comp\varpi_{\rho_i}(v)\in V_{g}(M)\otimes\ZZp=\bar{\varphi}(V_{g}(M))$. Then $\bar{\varphi}\comp\varpi_{V_{g}(M)}\comp\varpi_{\rho_i}(v)=0$ and thus $\bar{\varphi}(f_i(v))=0$. As $M\otimes\ZZG$ is a free $\ZZG$-module over $\beta$, this would further imply that all images $f_{ib}(v)$ belong to the kernel of $\bar{\varphi}$ and thus that $\bar{\varphi}(F(v))=0$. 		
		 		If furthermore $v\in M$, then $F(v)$ also belongs to $M$ and this thus implies that $F(v)\in pM$.
		 		
		 		Let $p$ now be a prime such that for any $a\in \mf A$, the prime $p$ is not a divisor of the index of $F(a) \in M$. By Lemma \ref{prop:monovariablebound} we have that this index is no larger then $\vert F \vert (n)$. By Lemma \ref{prop:pnondevider}, such a $p$ can be chosen no larger then $C_{\abs G}+C_{\abs G}\ln( (\vert F \vert (n))^{\abs{\mf A}})$ for some uniform constant $C_{\abs G}.$ Then by the previous, $\varpi_{\rho_i}(a)+pM$ all lie in the space $\bar{\varphi}(N_i) \setminus \bar{\varphi}\left( V_{g}(M)\right)$ for any $a\in\mf A$.
		 		By Lemma \ref{prop:identicalcoppies:lemma}, as $p> \abs{G}$, there now exists maps $\bar{\varphi}_i:\bar{\varphi}(N_i)\rightarrow \bar{\varphi}(N_{i,1})$ such that the elements $\bar{\varphi}_i\comp\varpi_{\rho_i}(a)$ are not elements of $V_{g}(\bar{\varphi}(N_{i,1}))$. The result follows as $\bar{\varphi}(N_{i,1})$ has order $p^d\leq \left(C_{\abs G}+C_{\abs G}\ln( (\vert F \vert(n))^{\abs{\mc A}})\right)^d$ and $\vert F \vert$ is polynomial.
		 		
		 	\end{proof}
		 	
		 	The previous lemma was stated for $\ZZ$-irreducible representations. However, the same statement holds for several copies of the same irreducible representation. 
		 	\begin{lemma}\label{prop:conjMultipleCoppies:lemma}
		 		Let $\rho:G\acts M$ be a representation of a finite group on a free $\ZZ$-module such that ${M\otimes \QQ}$ has up to isomorphism a unique $\QQ$-irreducible subrepresentation $N_1$. Let $d$ be the dimension of one of the $\CC$-irreducible subrepresentations of $M$. For every $g\in G$ there exists some constant $C$ such that for any $\mf A\subset M$ with $\mf A\cap V_g(M)=\emptyset$ and $\abs{\mf A}\leq \abs{G}$, there exists a finite action-preserving quotient $\pi:M\rightarrow Q$ such that $\vert Q \vert \leq C + C \ln^d(\Vert \mf A \Vert)$ and such that $\pi(a)\notin\pi(V_g(M))$ whenever $a\in \mf A$.
		 		
		 	\end{lemma}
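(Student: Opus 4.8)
The plan is to deduce this from the $\ZZ$-irreducible case, Lemma~\ref{prop:conjsepnotinW':lemma}, by writing $M$ as (a lattice commensurable with) several copies of a single $\ZZ$-irreducible lattice and then separating with the $G$-maps produced by Lemma~\ref{prop:identicalcoppies:lemma}. We may assume $\mf A\neq\emptyset$; since $0\in V_g(M)$, every element of $\mf A$ is then nonzero and $\Vert\mf A\Vert\geq 1$.

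First I would fix a $G$-invariant $\ZZ$-irreducible lattice $N\subset N_1$, for instance the $\ZZ$-span of the $G$-orbit of a $\QQ$-basis of $N_1$. As $M\otimes\QQ\cong N_1^{\,l}$ for some $l$ and $N\otimes\QQ\cong N_1$, I may identify $M$ and $N^l$ with two full $G$-invariant lattices inside one $\QQ[G]$-module $W$, and fix $c\in\NN_0$ with $cM\subseteq N^l$. Directly from the definition of the radical, $V_g(M)=M\cap\spn{W_g(M)}_\QQ$ and $V_g(N^l)=N^l\cap\spn{W_g(M)}_\QQ=V_g(N)^l$ (the three $\QQ$-spans $\spn{W_g(M)}_\QQ=\spn{W_g(N^l)}_\QQ=(\id_W-\rho(g))(W)$ agree), and $V_g(N)$ is radical in $N$. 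Applying Lemma~\ref{prop:identicalcoppies:lemma} with $R=\ZZ$, $M_1=N$, $V_1=V_g(N)$ and $m=\abs G$ yields finitely many $G$-maps $\varphi_i\colon N^l\to N$ with the stated separation property for subsets of $N^l$ of size at most $\abs G$.

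Given $\mf A$, consider $c\mf A\subseteq N^l$: multiplication by $c\neq 0$ is injective and carries anything outside $\spn{W_g(M)}_\QQ$ to something outside $\spn{W_g(M)}_\QQ$, so $c\mf A$ has size at most $\abs G$ and misses $V_g(N)^l$. Choose $i$ with $\varphi_i(c\mf A)\cap V_g(N)=\emptyset$. Since $\varphi_i$ is a $G$-map, $\varphi_i\comp(\id_N-\rho(g))=(\id_N-\rho(g))\comp\varphi_i$, hence $\varphi_i(V_g(N^l))\subseteq V_g(N)$ and in particular $\varphi_i(c\cdot V_g(M))\subseteq V_g(N)$. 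Now apply Lemma~\ref{prop:conjsepnotinW':lemma} to the $\ZZ$-irreducible representation $G\acts N$ and the set $\varphi_i(c\mf A)$ (of size $\leq\abs G$, missing $V_g(N)$): it gives an action-preserving quotient $\pi\colon N\to Q$ with $\abs Q\leq C_0+C_0\ln^d(\Vert\varphi_i(c\mf A)\Vert)$ and $\pi(\varphi_i(ca))\notin\pi(V_g(N))$ for all $a\in\mf A$; the exponent $d$ is the same as in the statement, since $N\otimes\CC\cong N_1\otimes_\QQ\CC$ and $M\otimes\CC\cong(N_1\otimes_\QQ\CC)^l$ have the same irreducible constituents up to isomorphism. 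Set $\Psi=\pi\comp\varphi_i\comp(c\,\cdot\,)\colon M\to Q$, a $G$-map; its image $Q'$ is a finite $G$-invariant quotient of $M$ with $\abs{Q'}\leq\abs Q$, and if $\Psi(a)\in\Psi(V_g(M))$ for some $a\in\mf A$, then $\pi(\varphi_i(ca))\in\pi(\varphi_i(c\cdot V_g(M)))\subseteq\pi(V_g(N))$, a contradiction; thus $\Psi$ separates $\mf A$ from $V_g(M)$.

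It remains to absorb the extra factors into the constant: there are only finitely many $\varphi_i$ and each $a\mapsto\varphi_i(ca)$ is a fixed $\ZZ$-linear map, so $\Vert\varphi_i(c\mf A)\Vert\leq C_1\Vert\mf A\Vert$, whence $\abs{Q'}\leq C+C\ln^d(\Vert\mf A\Vert)$ for a constant $C$ depending only on $G\acts M$ and $g$ (using $\Vert\mf A\Vert\geq 1$). All the real work is imported from Lemmas~\ref{prop:conjsepnotinW':lemma} and~\ref{prop:identicalcoppies:lemma}; the only point requiring care is the bookkeeping forced by the finite index $[N^l:cM]$ — namely that scaling by $c$ and the maps $\varphi_i$ are compatible with the subgroups $V_g(\cdot)$ and distort norms by at most a multiplicative constant, so that the logarithmic bound survives.
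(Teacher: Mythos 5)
Your proof is correct and follows essentially the same route as the paper: both pass from $M$ to a commensurable direct sum $N^l$ of a single $\ZZ$-irreducible $G$-lattice, invoke Lemma~\ref{prop:identicalcoppies:lemma} to project onto one copy while keeping $\mf A$ clear of $V_g$, and then apply Lemma~\ref{prop:conjsepnotinW':lemma} to that copy, absorbing the bounded norm distortion from the finitely many fixed $\ZZ$-linear maps into the constant. The only cosmetic difference is that you scale $M$ into $N^l$ by an integer $c$, whereas the paper rescales $M_1$ so that $M$ embeds with finite index, but these are the same bookkeeping device.
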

		 	\begin{proof}
		 	Fix any generating set $S_M$ on $M$ and an element $g \in G$. Let $M_1$ be a $\rho$-invariant finitely generated subgroup in $N_1$. Then, after rescaling $M_1$ if necessary, $M$ embeds as finite index subrepresentation of $M_1\oplus M_1\oplus\cdots\oplus M_1$, as both have the same character over $\QQ$. On $M_1\oplus\cdots\oplus M_1$ we fix a generating set $S_0$ containing at least the image of $S_M$ under this embedding. As for any $a\in\mf A$, it holds that $a\notin V_{g}(M)$, we also have that in $M_1\oplus\cdots\oplus M_1$ that ${a\notin V_g(M_1\oplus\nobreak\cdots\oplus M_1)}$. Using Lemma \ref{prop:identicalcoppies:lemma}, we obtain a finite set of $G$-invariant maps $\left\{\varphi_i:M_1\oplus\cdots\oplus M_1\rightarrow M_1 \mid i \in I \right\}$ such that $\varphi_i(\mf A)\cap V_g(M_1)=\emptyset$ for some $i \in I$.
		 	Fix a generating set $S$ containing the image $\varphi_i(S_0)$ for all $i \in I$. 	 		For this generating set on $M_1$, the map $M\rightarrow M_1\oplus\cdots\oplus M_1\rightarrow M_1$ does not increase the word norm. The result now follows immediately from Lemma \ref{prop:conjsepnotinW':lemma}.
		 	\end{proof}
		 	Note that in the previous lemmas, the size of the quotient is independent of the size of $\mf A$. Note that the bound is also sharp. Indeed this follows from the lower bound in $\cite{dere2023residual}$.

		 	\begin{proposition}
		 		Let $\rho:G\acts M$ be a representation of a finite group on a free $\ZZ$-module, and let $H<M\rtimes_\rho G$ be an extension of $\abs{G}M$ by $G$. Then $\conj_H(n)\prec \ln^{\mf k_2}(n)$ where $k$ is given by $$\mf k_2 =\max_{g\in G}\max_{(v_1,g),(v_2,g)\in H}\min\set{\dim K\mid K g,m_H\mathrm{-admits} (v_1,v_2)}$$
		 		with $m_H$ the constant from Notation \ref{not:globalconstant}.
		 	\end{proposition}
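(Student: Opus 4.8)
The plan is to take two non-conjugate elements $a,b\in H$ with $\norm a_S,\norm b_S\leq n$ and to construct a $\rho$-invariant finite index submodule $N\subset M$ with $[H:N]\prec\ln^{\mf k_2}(n)$ such that $a$ and $b$ stay non-conjugate in $H/N$; by Lemma \ref{prop:generatorinvariance} this gives the statement. First I would normalise. If the images of $a$ and $b$ in $G$ are not conjugate, the quotient $H\to G$ of bounded order already separates them, so I may assume they have conjugate images; conjugating $b$ by a lift of bounded word norm of a suitable element of $G$ (which does not change the conjugacy class of $b$ and changes its norm by at most a bounded additive constant) I may write $a=(v_1,g)$ and $b=(v_2,g)$ with $(v_i,g)\in H$ for a common $g\in G$. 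Writing $a,b$ as words in a fixed generating set and using that $\rho(G)$ is a finite set of integral matrices gives $\norm{v_1},\norm{v_2}\leq C_0n$ with $C_0$ depending only on $\rho:G\acts M$. Since $a\not\sim b$, Lemma \ref{prop:conjugatesinrepresentation:lemma} gives $v_1-\rho(h)v_2-(v_h-\rho(g)v_h)\notin\abs G W_g(M)$ for every $h\in C(g)$, so Lemma \ref{prop:ExistenceAdmittingSubset:Lemma} provides a $K\subset\range l$ that $g,m_H$-admits $(v_1,v_2)$; I fix such a $K$ of minimal dimension, so $\dim K\leq\mf k_2$.

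Next I would sort $C(g)$ using the admitting property: call $h$ \emph{strong} if $(v_1,v_2)$ is $g$-strongly unsolvable in $h$; otherwise \emph{local} if $(v_1,v_2)$ is $g,m_H$-locally unsolvable in some $(i,h)$; and otherwise \emph{global}, in which case there is some $i(h)\in K$ with $(v_1,v_2)$ $g,m_H$-globally unsolvable in $(i(h),h)$. For $i\in K$ put $\mf A_i=\set{\pi_i(v_1-\rho(h)v_2)\mid h\text{ global},\ i(h)=i}\subset M_i$, so $\abs{\mf A_i}\leq\abs{C(g)}\leq\abs G$, and $\mf A_i\cap V_g(M_i)=\emptyset$ since global unsolvability in $(i,h)$ forbids $\pi_i(v_1-\rho(h)v_2)\in V_g(M_i)$; the norm estimate gives $\Vert\mf A_i\Vert\leq C_1n$. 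As $G\acts M_i$ has a unique $\QQ$-irreducible subrepresentation up to isomorphism by construction of the square hull, Lemma \ref{prop:conjMultipleCoppies:lemma} applied to $M_i$ and $g$ yields an action-preserving quotient $q_i:M_i\to Q_i$ with $q_i(a')\notin q_i(V_g(M_i))$ for all $a'\in\mf A_i$ and $\abs{Q_i}\leq C_i+C_i\ln^{d_i}(\Vert\mf A_i\Vert)\prec\ln^{d_i}(n)$.

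Now I would assemble $N$. Let $M'=M_1\oplus\cdots\oplus M_l$ be the square hull, set $N_i'=\abs G^{m_H}M_i$ for $i\notin K$ and $N_i'=\ker q_i\cap\abs G^{m_H}M_i$ for $i\in K$, and put $N=\bigoplus_{i=1}^lN_i'$, which is $\rho$-invariant. Since $\abs G M'\subset M$ by Lemma \ref{prop:squareHullAlmostIncluded:lemma} and $m_H\geq 5$, each $N_i'\subset\abs G^{m_H}M_i\subset\abs G^{m_H-1}M\subset\abs G^3M\subset M$, so $N\subset\abs G^3M$ is a $\rho$-invariant submodule of $M$; as conjugation in $H$ by $(v_h,h)$ sends $m\in M$ to $\rho(h)m$, $N$ is normalised by every generator of $H$, hence $N\normal H$ with $H/N$ finite. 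For the index, $[M:N]\leq[M':N]=\prod_{i=1}^l[M_i:N_i']$, in which each factor with $i\notin K$ is the fixed number $[M_i:\abs G^{m_H}M_i]$ and each factor with $i\in K$ is at most $[M_i:\abs G^{m_H}M_i]\cdot\abs{Q_i}$, so $[H:N]\leq\abs G\,[M:N]\prec\prod_{i\in K}\abs{Q_i}\prec\ln^{\sum_{i\in K}d_i}(n)=\ln^{\dim K}(n)\leq\ln^{\mf k_2}(n)$, with a constant uniform over all such pairs $a,b$.

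It remains to check that $a,b$ stay non-conjugate in $H/N$: by Corollary \ref{prop:conjugatesinrepresentationquotient:corollary} I must show $v_1-\rho(h)v_2-(v_h-\rho(g)v_h)\notin N+\abs G W_g(M)$ for every $h\in C(g)$. The needed inputs are $v_h-\rho(g)v_h\in V_g(M)$ and, using $\abs G\pi_i(V_g(M))\subset V_g(M_i)$ from the proof of Lemma \ref{prop:WRadicalInSubgroup:lemma} together with radicality of $V_g(M_i)$, that $\pi_i(\abs G W_g(M))\subset V_g(M_i)$ and $\pi_i(v_h-\rho(g)v_h)\in V_g(M_i)$. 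If $h$ is strong the element is not even in $\abs G W_g(M)+\abs G^3M\supseteq N+\abs G W_g(M)$. If $h$ is local, pick $i$ with $\pi_i(v_1-\rho(h)v_2)\notin\abs G^{m_H}M_i+V_g(M_i)$; since $\pi_i(N)=N_i'\subset\abs G^{m_H}M_i$ while $\pi_i(\abs G W_g(M))+\pi_i(v_h-\rho(g)v_h)\subset V_g(M_i)$, applying $\pi_i$ to a hypothetical membership contradicts this. If $h$ is global with $i=i(h)$, then $q_i(\pi_i(v_1-\rho(h)v_2))\notin q_i(V_g(M_i))$, i.e.\ $\pi_i(v_1-\rho(h)v_2)\notin\ker q_i+V_g(M_i)$; since $\pi_i(N)=N_i'\subset\ker q_i$ and again $\pi_i(\abs G W_g(M))+\pi_i(v_h-\rho(g)v_h)\subset V_g(M_i)$, applying $\pi_i$ again rules out the membership. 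Hence $\conj_H(a,b)\leq[H:N]\prec\ln^{\mf k_2}(n)$. The main obstacle is precisely this last assembly: arranging $N$ so that the single bounded-index ``floor'' $\bigoplus_i\abs G^{m_H}M_i$ simultaneously handles the strongly and locally unsolvable directions while the quotients $q_i$ of Lemma \ref{prop:conjMultipleCoppies:lemma} handle the globally unsolvable directions indexed by $K$, and keeping careful track of how the projections $\pi_i$ interact with $W_g(M)$, $V_g(M)$ and the correction terms $v_h-\rho(g)v_h$; the hard quantitative estimate, Lemma \ref{prop:conjMultipleCoppies:lemma}, is already available.
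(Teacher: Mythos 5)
Your proposal is correct and follows essentially the same strategy as the paper's proof: normalise to a common second coordinate, fix a minimal admitting $K$, apply Lemma \ref{prop:conjMultipleCoppies:lemma} componentwise to the globally unsolvable directions, and verify non-conjugacy in $H/N$ via Corollary \ref{prop:conjugatesinrepresentationquotient:corollary} by splitting $C(g)$ into the strong/local/global cases. The only cosmetic difference is that you build $N$ as a direct sum $\bigoplus_i N_i'$ inside the square hull (with $N_i'\subset\abs{G}^{m_H}M_i$ to push it into $M$), whereas the paper defines $N=(\abs{G}^3M)\cap\bigcap_i\pi_i^{-1}(\abs{G}^{m_H}M_i)\cap\bigcap_{i\in K}\ker(\varphi_i\circ\pi_i)$ and then intersects with $\abs{G}M$; your $N$ is contained in the paper's, and both index bounds come to the same $\prec\prod_{i\in K}\abs{Q_i}\prec\ln^{\dim K}(n)$.
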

		 	\begin{proof}
		 		Let $S$ be a generating set and let $(v_1,g),(v_2,h)\in H$ be non-conjugate in $H$ and of word norm at most $n$. First if $g$ and $h$ are non-conjugate in $G$, then the quotient map $H\rightarrow G$ preserves the non-conjugacy of these elements, and $G$ is a finite quotient separating this conjugacy classes. So from now on, we assume that $g$ and $h$ are conjugate in $G$, and fix $g_0 \in G$ be such that $g_0hg_0\inv=g$. As $(0,g_0)(v_2,h)(0,g_0)\inv=(\rho(g_0)v_2,g)$, it follows thus that $$\conj \left((v_1,g),(v_2,h)\right)=\conj\left((v_1,g),(\rho(g_0)v_2,g)\right).$$ As there are only finitely many $\rho(g_0)$, which are moreover linear, we may thus restrict ourselves to non-conjugate pairs $(v_1,g)(v_2,g)$ of word norm at most $C'n$ for some $C^\prime > 0$.
We fix a generating set $S_M$ on $M$ that contains the set $\set{\rho(g)v\mid(v,h)\in S,g\in G}$, which implies that projection on the first component does not increase the norm for $S_M$. 

		 		After fixing generating sets on $M_i$, we find constants $s_i > 0$ such that the projection maps $\pi_i$ increase the word norm by at most a constant $s_i$. Similarly the maps $\rho(g)$ are linear and increase the word norm by at most a constant $s_g$.
		 		Let $C_i$ be the constant of Lemma \ref{prop:conjMultipleCoppies:lemma} where $M=M_i$.
		 	We now define the constant $$C=\max \left(\prod_{i=1}^l C_i \cdot \abs{G}^{(3+m_H)\dim{M}},2\max_{1 \leq i \leq l}(s_i)\max_{g \in G}(s_g)\right).$$
		 		
		 	 Let $K$ be a subset of $\range{l}$ of minimal dimension that $g,m_H$ admits $(v_1,v_2)$.		For $i\in K$, let $\mc H_i$ be the subset of $C(g)$, containing precisely those elements $h$ such that $(v_1,v_2)$ is $g,m_H$-globally unsolvable in $(i,h)$. Let $\varphi_i:M_i\rightarrow Q$ then be a map satisfying Lemma \ref{prop:conjMultipleCoppies:lemma} for $\mf A=\set{\pi_i(v_1-\rho(h)v_2\mid h\in \mc H_i)}$. The norm of $\pi_i(v_1-\rho(h)v_2)$ is bounded above by $2s_is_hn\leq CC'n$, and thus $Q$ is of order at most $C_i\ln^{d_i}(CC'n)$.
		 	 
		 	Let $K_i$ be the kernel of $\varphi_i\comp\pi_i$, $L_i=\pi_i\inv(\abs{G}^{m_H}M_i)$ and define $N=\left(\abs{G}^3M\right)\cap \left(\bigcap_{i=1}^l L_i \right)\cap \left(\bigcap_{i=1}^l K_i\right)$. The index of $N$ is at most the product of the indices of the components, and thus bounded by $C\ln^k{CC'n}$.
		 		Furthermore, $v_1-\rho(h)v_2-v_h+\rho(g)v_h\notin K+\abs{G}W_g(M)$, as for any $h \in C(g)$, one of the following must hold:\begin{itemize}
		 			\item $(v_1,v_2)$ is strongly unsolvable in $h$, in this case ${(v_1-\rho(h)v_2-v_h+\rho(g)v_h)\notin W_g(M)+\abs{G}^3M}$.\\
		 			\item $(v_1,v_2)$ is $g,m_H$-locally unsolvable in $(i,h)$ for some $i\in\range{l}$. In this case, $\pi_i(v_1-\rho(h)v_2)\notin\abs{G}^{m_H}M+V_g(M_i)$, and thus $v_1-\rho(h)v_2-v_h+\rho(g)v_h\notin L_i+\abs{G}W_g(M)$.\\
		 			\item $(v_1,v_2)$ is $g,m_H$-globally unsolvable in $(i,h)$ for some $i\in K$, in particular, $h\in \mc H_i$, and thus $v_1-\rho(h)v_2-v_h+\rho(g)v_h\notin K_i+\abs{G}W_g(M)$.
		 		\end{itemize}

		 		The subspace $N$ is thus such that $v_1-\rho(h)v_2 - v_h+\rho(g)v_h\notin \abs{G}W_g(M)+N$ for any choice of $h\in C(g)$. As both $N$ and $\abs{G}M$ are $\rho$-invariant, also $N'=N\cap\abs{G}M=N\cap H$ is $\rho$-invariant and thus $N'$ is a normal subgroup of $H$. This subspace is of index at most $\abs{G}C\ln^k(CC'n)$ in $H$. Furthermore, $(v_1,g)N'$ and $(v_2,g)N'$ are non-conjugate in $H / N^\prime$. Indeed if they would be conjugate, then $N$ as a subset of $M$ would have by Corollary \ref{prop:conjugatesinrepresentationquotient:corollary} some $h\in C(g)$ such that $$v_1-\rho(h)v_2 - v_h+\rho(g)v_h\in \abs{G}W_g(M)+N.$$ We conclude that $\conj_{H,S}(n)\leq \abs{G}C\ln^k(CC'n)$.

		 	\end{proof}
	\bibliographystyle{plain}
	\bibliography{refs}
\end{document}